\newcommand{\BE}{{\mathbb E}}
\newcommand{\BL}{{\mathbb L}}
\newcommand{\BN}{{\mathbb N}}
\newcommand{\BR}{{\mathbb R}}
\newcommand{\BP}{{\mathbb P}}
\newcommand{\VV}{{\mathcal{V}}}
\newcommand{\NN}{{\mathcal{N}}}
\newcommand{\WW}{{\mathcal W}}
\newcommand{\FF}{{\mathcal{F}}}
\newcommand{\GG}{{\mathcal{G}}}
\newcommand{\BB}{{\mathcal{B}}}
\newcommand{\HH}{{\mathcal H}}
\newcommand{\SM}{{\mathcal{S}}}
\newcommand{\MM}{{\mathcal{M}}}
\newcommand{\EE}{{\mathcal{E}}}
\newcommand{\BBM}{{\mathbf{M}}}
\newcommand{\BBX}{{\mathbf{X}}}
\newcommand{\tr}{\mbox{\rm Tr}}
\newtheorem{theorem}{\bf Theorem}[section]
\newtheorem{proposition}[theorem]{\bf Proposition}%[subsection]
\newtheorem{lemma}[theorem]{\bf Lemma}%[subsection]
\newtheorem{corollary}[theorem]{\bf Corollary}%[subsection]
\theoremstyle{definition}
\newtheorem*{definition}{Definition}
\newtheorem{example}[theorem]{Example}
\newtheorem{remark}[theorem]{\bf Remark}
\numberwithin{equation}{section}
\begin{document}
\title{Nonlinear parabolic SPDEs involving
Dirichlet operators}
\author {Tomasz Klimsiak and Andrzej Rozkosz\smallskip\\
%{\small Faculty of Mathematics and Computer Science, Nicolaus
%Copernicus University} \\ {\small  Chopina 12/18,
%87--100 Toru\'n, Poland}\\
%{\small e-mail: tomas@mat.uni.torun.pl}
}
\date{}
\maketitle
\begin{abstract}
We study the problem of existence, uniqueness and regularity of
probabilistic solutions of the Cauchy problem for nonlinear
stochastic partial differential equations involving operators
corresponding to regular (nonsymmetric) Dirichlet forms. In proofs
we combine the methods of backward doubly stochastic differential
equations with those of probabilistic potential theory and
Dirichlet forms.
\end{abstract}

\footnote{2010 \emph{Mathematics Subject Classification}: Primary
60H15; Secondary 60H10, 60H30, 35R60.}

\footnote{\emph{Key words and phrases}: Stochastic partial
differential equation, Backward doubly stochastic differential
equation, Dirichlet form.}

\section{Introduction} \label{sec1}

In the present paper we are concerned with the  problem of
existence, uniqueness and regularity of  probabilistic solutions
of stochastic partial differential equations (SPDEs for short) of
the form
\begin{equation}
\label{eq0.0} du(t)=(A_tu+f(t,x,u))\,dt+\tilde g(t,x,u)\,dB_t,
\quad u(0)=\varphi.
\end{equation}
%\begin{equation}
%\label{eq0.0} \frac{\partial u}{\partial t}
%+A_tu=-f(t,x,u)-g(t,x,u)\dot{B}, \quad u(T)=\varphi.
%\end{equation}
In (\ref{eq0.0}), $B$ is some $Q$-Wiener process and $A_t$,
$t\in[0,T]$, are operators associated with some family of regular
(nonsymmetric) Dirichlet forms satisfying mild regularity
assumptions. These assumptions are automatically satisfied if
$A_t=A$, $t\in[0,T]$. Therefore our results apply in particular to
equations (\ref{eq0.0}) with $A_t$ replaced by any operator $A$
corresponding to a regular Dirichlet form. The class of such
operators is quite wide. It contains both local operators, whose
model example is the Laplacian $\Delta$, and nonlocal operators,
whose model example is the fractional Laplacian
$\Delta^{\alpha/2}$ with $\alpha\in(0,2)$. Other interesting
examples are to be found for instance in
\cite{FOT,KR:JFA,KR:CM,MR}. An important example of the family of
operators depending on $t$ and satisfying our regularity
assumptions is the family of uniformly elliptic operators of the
form
\begin{equation}
\label{eq6.01} A_tu=\sum^d_{i,j=1}(a_{ij}(t,x)u_{x_i})_{x_j}
\end{equation}
with ellipticity constant not depending on $t$. Actually, in case
$A_t$ are of the form (\ref{eq6.01}),  we consider equations more
general then (\ref{eq0.0})  with coefficients $f,\tilde g$
depending on $u$ and its gradient $\nabla u$.

As for $\varphi,f,\tilde g$, we assume that
$\varphi,f(\cdot,\cdot,0),\tilde g(\cdot,\cdot,0)$ are
square-integrable, $f(t,x,\cdot)$ is continuous and monotone (no
assumption on the growth of $f(t,x,\cdot)$ is imposed) and $\tilde
g(t,x,\cdot)$ is Lipschitz -continuous. In the case where
$f,\tilde g$ depend on $u$ and $\nabla u$, we also assume that
they are Lipschitz-continuous with respect to $\nabla u$.

To study (\ref{eq0.0}) we develop  the approach used successfully
in \cite{K:JFA,KR:JFA,KR:CM} to investigate Sobolev space
solutions of semilinear PDEs with operators corresponding to
Dirichlet forms. In those papers  PDEs are studied by the methods
of the theory of backward  stochastic differential equations
(BSDEs for short) combined with those of the probabilistic
potential theory and Dirichlet forms. In the present paper the
strategy for studying SPDEs is similar. The major difference is
that now we use backward doubly stochastic differential equations
(BDSDEs for short) instead of BSDEs. The idea of studying
nonlinear SPDEs via BDSDEs goes back to \cite{PP}. In \cite{PP}
classical solutions of equations with nondivergence form operators
with regular coefficients are considered. Our approach to
(\ref{eq0.0}) was also motivated by the desire to develop the
ideas of \cite{PP} to encompass a broader class of operators and
to study Sobolev space solutions.

As a matter of fact, we study the following Cauchy problem with
terminal condition:
\begin{equation}
\label{eq0.1}
du(t)=-(A_tu+f(t,x,u))\,dt-g(t,x,u)\,d^{\dagger}\beta_t, \quad
u(T)=\varphi.
\end{equation}
Here $g=(g_k)$ is a sequence of real functions  on
$\Omega\times(0,T]\times E\times\BR$ determined by $\tilde g$ and
$Q$, $\beta=(\beta_k)$ is a sequence of one-dimensional mutually
independent standard Wiener processes defined on some probability
space $(\Omega,\FF,P)$ and
$g\,d^{\dagger}\beta_t=\sum^{\infty}_{k=1}g_k\,d^{\dagger}\beta^k_t$,
where $g_k\,d^{\dagger}\beta^k_t$ denotes the backward It\^o
integral. The results for (\ref{eq0.1}) can be easily translated
into results for
(\ref{eq0.0}). % and vice versa.
However, since we heavily rely on the theory of BDSDEs, problem
(\ref{eq0.1}) is much more convenient to deal with.

Roughly speaking, our strategy for investigating (\ref{eq0.1})
consists of two steps. Suppose that the operators $A_t$ are
associated with some family of Dirichlet forms $\{B^{(t)}\}$ on
$L^2(E;m)$ with common domain $V$ and let $\EE$ be the
time-dependent Dirichlet form determined by $\{B^{(t)}\}$. Denote
by $\BBM=\{(\BBX,P_{z});z\in\BR\times E\}$ a time-space Markov
process with life time $\zeta$ associated with $\EE$. In the first
step we prove that there exists an exceptional set $N\subset
E_{0,T}:=(0,T]\times E$ and a pair of processes $(Y,M)$ such that
for every $z\in E_{0,T}\setminus N$ the process $M$ is a
martingale under $P\otimes P_z$ and $(Y,M)$ is a unique solution
of the BDSDE
\begin{align}
\label{eq1.02} Y_t&=\varphi(\BBX_{T_{\iota}}) +\int^{\zeta\wedge
T_{\iota}}_tf(\BBX_r,Y_r)\,dr+\int^{\zeta\wedge
T_{\iota}}_tg(\BBX_r,Y_r)\,d^{\dagger}\beta^{\iota}_r\\
&\quad -\int^{\zeta\wedge T_{\iota}}_tdM_r,\quad
t\in[0,\zeta\wedge T_{\iota}], \quad P\otimes P_z\mbox{-a.s.,}
\nonumber
\end{align}
where $T_{\iota}=T-\iota(0)$. Here $\iota$ is the uniform motion
to the right (in particular, $\iota(0)=s$ under $P_z$ with
$z=(s,x)$) and $\beta^{\iota}_t=\beta_{t+\iota(0)}$, $t\ge0$. In
fact, we show that
\[
Y_t=u(\BBX_t),\quad t\in [0,\zeta\wedge T_{\iota}]
\]
for some $u:\Omega\times E_{0,T}\rightarrow\BR$ such that
$u(\omega;\cdot)$ is quasi-continuous (with respect to the
capacity associated with $\EE$). Therefore putting $t=0$ in
(\ref{eq1.02}) and taking the expectation with respect to $P_z$ we
see that $u$ satisfies the nonlinear Feynman-Kac formula
\begin{align}
\label{eq1.03}
u(z)=E_{z}\Big(\varphi(\BBX_{T_{\iota}})
+\int^{\zeta\wedge T_{\iota}}_0f(\BBX_t,u(\BBX_t))\,dt
+\int^{\zeta\wedge T_{\iota}}_0
g(\BBX_t,u(\BBX_t))\,d^{\dagger}\beta^{\iota}_t\Big).
\end{align}
In fact, a quasi-continuous $u$ satisfying (\ref{eq1.03}) is
unique and we call it the (probabilistic) solution of
(\ref{eq0.0}). The second step is to use (\ref{eq1.03})  to derive
regularity properties of $u$. Our main result says that there is
$c>0$ such that
\begin{align}
\label{eq1.9} &\sup_{0\le t\le T}
E\|u(t)\|^2_{L^2(E;m)}+E\int^T_0\|u(t)\|^2_V\,dt\\
&\qquad\le cE\Big(\|\varphi\|^2_{L^2(E;m)}+
\int^T_0(\|f(t,\cdot,0)\|^2_{L^2(E;m)}+\sum^{\infty}_{k=1}
\|g_k(t,\cdot,0)\|^2_{L^2(E;m)})\,dt\Big). \nonumber
\end{align}

The problems of existence and uniqueness of solutions of
(\ref{eq0.0}) have been the subject of numerous papers. Here let
us mention important papers \cite{PZ1,PZ2} dealing with mild
solutions of equations of the form (\ref{eq0.0}) with $A_t=\Delta$
and $B$ being a spatially homogeneous Wiener process. We also
refer to \cite{PZ1,PZ2} for many bibliographic comments on
(\ref{eq0.0}) with $A_t=\Delta$. Note that in \cite{PZ1,PZ2} the
``semigroup approach" to (\ref{eq0.0}) is used. For results on
(\ref{eq0.0}) (with $A_t$ being local operators) which can be
obtained by using the ``variational approach", see \cite{Kr,R} and
the references therein. Solutions in the sense defined by Walsh
\cite{W} are considered for instance in \cite{BGP,GP} (see also
the expository paper \cite{DQS}).
%($f$ is measurable  locally bounded and $g$ is
%nondegenerate and satisfies some regularity conditions or $f$ only
%satisfies some integrability  conditions and $g$ is constant).
%For a comparison of these two approaches,
%both based on the theory of integration with respect to
%Hilbert-space valued processes, with the ``martingale measures
%approach" based on the Walsh's theory of stochastic integration
%see the expository paper \cite{DQS}.

The approach of \cite{PP}, via BDSDE, was developed in several
papers. In \cite{BM,BCM,WZ,ZZ} it is assumed that the operators
$A_t$ are the same as in \cite{PP}, i.e. second order operators in
nondivergence form with coefficients having some regularity
properties. In \cite{BM,ZZ} under the assumption that $f,\tilde g$
are Lipschitz continuous a stochastic representation of weak
solutions of the Cauchy problem for SPDEs in terms of BDSDE is
given. In \cite{WZ} the assumption that $f$ is Lipschitz
continuous in $u$ is weakened to monotonicity combined with the
linear growth condition in $u$. In \cite{BCM} a stochastic
representation is given for SPDEs with nonlinear boundary Neumann
conditions. Paper \cite{ZZ} also deals with stationary solutions
of SPDEs and related BDSDEs with infinite horizon. In \cite{DS}
the semigroup method is used to prove that if  $f,g$ are Lipschitz
continuous then there exists a unique mild solution of SPDE
involving general non-negative self-adjoint operator $A$ (not
depending on $t$) and finite-dimensional noise. Then this
analytical result is used to get a stochastic representation of
the solution in case $A$ is a diffusion operator. It is also worth
noting that in \cite{BGP,GP}, in case $B$ is a space-time white
noise, $A_t=\partial^2/\partial x^2$ and $g$ is nondegenerate,
existence and uniqueness results for (\ref{eq0.0}) with irregular
$f$ are proved ($f$ is merely measurable and satisfies some
integrability condition in case $g$ is constant, or $f$ is
measurable and locally bounded and $g$ is nondegenerate and
satisfies some regularity conditions).

The novelty of our paper lies in the fact that we prove in a
unified way the existence and uniqueness of solutions of
(\ref{eq0.0}) for much wider classes of operators then those
considered in previous papers or under less restrictive
assumptions on $f$ (see, however, the one dimensional results
proved in \cite{BGP,GP}). In particular, in contrast to \cite{DS},
in our paper the operators $A_t$  may depend on time and we only
assume that $f$ is continuous and monotone in $u$ (no restrictions
on the growth of $f$ with respect to $u$). Secondly, we show in a
unified way that the solutions of (\ref{eq0.0}) belong to some
Sobolev spaces and we prove energy estimates. Moreover, we obtain
stochastic representation of solutions of the form (\ref{eq1.03})
for quasi-every (and not just $m$-a.e.) point in $E_{0,T}$. Let us
stress one again, however, that except for some special cases, we
have to assume that $B$ is a $Q$-Wiener process with $Q$ of trace
class. Therefore our result do not cover the existence and
uniqueness results of \cite{PZ1,PZ2} obtained for SPDEs with a
spatially homogeneous Wiener process.

Our methods of proofs are also new. We think that of particular
interest is our method for deriving from (\ref{eq1.03}) regularity
properties of $u$. The method is probabilistic in nature. As
already mentioned, the general idea comes from our previous papers
\cite{K:JFA,KR:JFA,KR:CM} on PDEs and BSDEs. Here, however, new
difficulties and subtleties arise. The idea is as follows.  We
show that given a solution $u$ of (\ref{eq0.0}), i.e. a
quasi-continuous function  $u$ satisfying (\ref{eq1.03}), one can
find a process  $M$ such that $(Y,M)=(u(\BBX),M)$ is a solution of
BDSDE (\ref{eq1.02}). Thus, in fact, (\ref{eq1.02}) and
(\ref{eq1.03}) are equivalent. It is also worth mentioning here
that the regularity of trajectories of the process $u(X)$ (i.e.
the fact that $u(X)$ is  c\`adl\`ag and $[u(X)]_{-}=u(X_{-})$) do
not follow directly from the deterministic potential theory,
because in general the nest for the quasi-continuous function
$u(\omega;\cdot)$ depends on $\omega\in\Omega$. From
(\ref{eq1.02}) we immediately get
\begin{equation}
\label{eq1.05} A^{[u]}_t:=u(\BBX_t)-u(\BBX_0)=M_t+N_t,\quad
t\in[0,T_{\iota}],
\end{equation}
with
\begin{equation}
\label{eq1.06} N_t=-\int^t_0f(\BBX_r,u(\BBX_r))\,dr-
\int^t_0g(\BBX_r,u(\BBX_r))\,d^{\dagger}\beta^{\iota}_r.
\end{equation}
The process $A^{[u]}$ in (\ref{eq1.05}) is a random additive
functional (AF) of the part $\BBM^{0,T}$ of the process $\BBM$ on
$E_{0,T}$. We show that $M$ is a random martingale AF of
$\BBM^{0,T}$ of finite energy and $N$ is a random continuous AF of
$\BBM^{0,T}$  of finite energy (we introduce these notions in
Section \ref{sec4}). However, in most interesting cases $N$ is not
of zero energy, because from (\ref{eq1.06}) it follows that
\[
Ee(N)=\frac12 E\int^T_0\!\!\int_E\|g(t,x,u(t,x))\|^2\,dt\,m(dx).
\]
Therefore (\ref{eq1.05}) cannot be viewed  as Fukushima's
decomposition of $A^{[u]}$. Nevertheless, we are able to prove the
following formula for the energy of $M$:
\begin{equation}
\label{eq1.07} Ee(M)= E\Big(\|u(0)\|^2_{L^2(E;m)} +\int_0^T
B^{(t)}(u(t),u(t))\,dt -\frac12\int_{E_{0,T}}|u(z)|^2\,k(dz)\Big),
\end{equation}
where $k$ is some killing measure. Roughly speaking, we obtain
energy estimate (\ref{eq1.9}) for $u$ by combining a priori
estimates for the solution $(u(\BBX),M)$ of (\ref{eq1.02}) with
the estimate (\ref{eq1.07}). The estimates for $(u(\BBX),M)$ are
proved by using the methods of BSDEs. We also prove that if
$Ee(N)>0$ then
\begin{equation}
\label{eq1.10} u\notin\mathbb{W}=\{v\in L^2(\Omega\times(0,T);V);
\frac{\partial v}{\partial t}\in L^2(0,T; V')\,\,P\mbox{-a.s.}\},
\end{equation}
which shows the difference between the regularity theory for
(\ref{eq0.0}) and for usual PDEs.

In the last section of the paper we show a connection between
probabilistic and mild solutions of (\ref{eq0.1}) in case $f$ is
Lipschitz continuous in $u$ and $A_t=A$, $t\in[0,T]$. Roughly
speaking, changing the order of integration in (\ref{eq1.03}) and
using the fact that
\[
E_x f(X_t)=P_tf(x),\quad f\in L^2(E;m),\quad m\mbox{-a.e. }x\in E,
\]
where $\{P_t,\, t\ge 0\}$ is the semigroup on $L^2(E; m)$
generated by $A$,  we get after some direct calculation that
\[
u(s)=P_{T-s}\varphi+\int^{T}_sP_{t-s}F(t,u(t))\,dt
+\int^{T}_sP_{t-s}G(t,u(t))\,d^{\dagger}B_t
\]
where $F$ and $G$ are the Nemitskii operators corresponding to $f$
and $\tilde g$.
%Note that under the monotonicity condition alone,
%i.e. without any growth restriction on $f$ with respect to $u$, we
%do not know whether one can  change of order of integration in
%(\ref{eq1.03}). Consequently, we do not know whether in this case
%there  exists a mild solution of (\ref{eq0.1}).

Finally, note that unlike \cite{K:JFA,KR:JFA,KR:CM}, in the case
where $A_t$ are defined by (\ref{eq6.01}), in the paper we treat
regularity of equations with coefficients $f,g$ depending both on
the solution and its  gradient.

\section{General BDSDEs}
\label{sec2}

In this section we consider general (non-Markovian) BDSDEs with
final condition $\xi$ and coefficients $f,g$ (BDSDE$(\xi,f,g)$ for
short) of the form
\begin{equation}
\label{eq3.1} Y_t=\xi+\int_t^Tf(r,Y_r)\,dr+\int_t^Tg(r,Y_r)\,
d^\dagger\beta_r-\int_t^T\,dM_r,\quad  t\in[0,T].
\end{equation}
To formulate the definition of a solution of (\ref{eq3.1}) we need
some notation. In what follows $\beta=(\beta^k)_{k\in\BN}$ is a
sequence of mutually independent one-dimensional standard Wiener
processes defined on some probability space $(\Omega,\FF,P)$ and
$(\Omega',\GG,(\GG_t)_{t\in[0,T]},P')$ is some filtered
probability space such that $(\GG_t)$ is right continuous and
complete. We set
\[
\FF_{t,T}^{\beta}=\sigma(\beta^k_r-\beta^k_t,\,r\in[t,T],k\in\BN)
\vee\NN,\quad \FF^{\beta}_t=\FF^{\beta}_{0,t},\quad t\in[0,T],
\]
where $\NN=\{A\subset\Omega:\exists B\in\FF^{\beta}_T\mbox{ such
that }A\subset B, P(B)=0\}$, and then we set
\[
\FF_t=\FF_{t,T}^{\beta}\vee\GG_t, \quad t\in[0,T].
\]
Note that $(\FF_t)$ is not increasing, so it is not  filtration.
%Nevertheless, in what follows we will use the notions of
%$(\FF_t)$-progressively measurable processes and $(\FF_t)$-adapted
%processes. They are defined in the same way as for ``true'' filtration.
We also set
\[
\mathbb{P}=P\otimes P'
\]
and by $\BE$ (resp. $E,E'$) we denote the expectation with respect
to the measure $\BP$ (resp. $P,P'$). Let  $X$ be a process defined
on $\Omega$, and $Y$ be a process on $\Omega'$. Throughout what
follows, without explicit mention we shall freely identify them
with  processes on $\Omega\times\Omega'$ defined as
\begin{equation}
\label{eq2.02} X(\omega,\omega')=X(\omega),\quad
Y(\omega,\omega')=Y(\omega')
\end{equation}
We will need  the following spaces.

\begin{itemize}
\item $M$ is the space of measurable processes $X=\{(X_t)_{t\in[0,T]}\}$
defined on $(\Omega\times\Omega',\FF\otimes\GG,\mathbb{P})$ such
that for a.e. $t\in [0,T]$ the random variable $X_t$ is
$\FF_t$-measurable. $M^2$ is the subspace of $M$ consisting of all
processes $X$ such that $(\BE\int_0^T|X_t|^2\,dt)^{1/2}<\infty$.

\item $\SM^2$ is the space of
c\`adl\`ag processes $Y\in M$  such that
$\|Y\|^2_{\SM^2}=\BE\sup_{0\le t\le T}|Y_t|^2<\infty$.

\item  $\MM^2$ is the space of
c\`adl\`ag processes $X\in M$  such that  $X$ is an
$(\FF^\beta_{T}\vee\GG_t)$-martingale  such that $X_0=0$ and
$\BE[X]_T<\infty$, where $[X]$ denotes the quadratic variation
process of $X$.
\end{itemize}

We will look for solutions of (\ref{eq3.1}) in the space
$\SM^2\times\MM^2$. Note that $\SM^2$ equipped with the norm
$\|\cdot\|_{\SM^2}$ is a Banach space. Similarly, $\MM^2$ equipped
with the norm $\|M\|_{\MM^2}=(\BE[M]_T)^{1/2}$ is a Banach space.

\begin{remark}
By a standard argument, if $M\in\MM^2$ then for $P$-a.e.
$\omega\in \Omega$ the process $M(\omega,\cdot)$ is a square
integrable $(\GG_t)$- martingale.
\end{remark}

Assume that we are given an $\FF_T$\,-measurable random variable
$\xi$ and two families $\{f(t,y),t\ge0\}_{y\in\BR}$,
$\{g_k(t,y),t\ge0\}_{y\in\BR,k\in\BN}$ of processes
$f(\cdot,y),g_k(\cdot,y):\Omega\times\Omega'\times[0,T]\rightarrow\BR$
of class $M$ (for brevity, in our notation we omit the dependence
on $(\omega,\omega')\in\Omega\times\Omega'$). Let
$g(\cdot,y)=(g_1(\cdot,y),g_2(\cdot,y),\dots)$ and let
$\|x\|=(\sum^{\infty}_{k=1}|x_k|^2)^{1/2}$ denote the usual norm
in the space $l^2$.

\begin{definition}
We say that a pair $(Y,M)\in\SM^2\times\MM^2$  is a solution of
BDSDE$(\xi,f,g)$ if
\begin{enumerate}
\item[(a)]$\int_0^T|f(t,Y_t)|\,dt<\infty$,
$\int_0^T\|g(t,Y_t)\|^2\,dt<\infty$ $\BP$-a.s.
\item[(b)] Eq. (\ref{eq3.1}) is satisfied $\BP$-a.s.
(In (\ref{eq3.1}), $\int^T_tg(r,Y_r)\,d^{\dagger}\beta_r
=\sum^{\infty}_{k=1}\int^T_tg_k(r,Y_r)\,d^{\dagger}\beta^k_r$ and
the integrals  involving the processes $\beta^k$ are backward
It\^o's integrals; note that under (a) the series converges in
$L^2(\Omega,\FF,P)$ for $P'$-a.s. $\omega'\in\Omega'$).
\end{enumerate}
\end{definition}

\begin{remark}
\label{rem2.2} Set
\[
\hat\beta=(\hat\beta^k)_{k\in\BN},\quad
\hat\beta^k_t=\beta^k_{T-t}-\beta^k_T,\quad t\in[0,T],
\]
and define $\FF_{t,T}^{\hat\beta}$, $\FF^{\hat\beta}_t$ to be
$\FF_{t,T}^{\beta}$, $\FF^{\beta}_t$ but with $\beta$ replaced by
$\hat\beta$. Then $\hat{\beta}$ is a sequence of mutually
independent standard Wiener processes.  If $\eta_t$ is
$\FF^{\beta}_{t,T}$-measurable, then $\eta_{T-t}$ is
$\FF^{\hat\beta}_t$-measurable and one can check that if
$\eta=(\eta^1,\eta^2,\dots)$ is an
$(\FF^{\beta}_{t,T})_{t\in[0,T]}$-adapted process such that
$P(\int^T_0\|\eta_t\|^2\,dt<\infty)=1$ then
\[
\int^T_t\eta_s\,d^{\dag}\beta_s
=-\int^{T-t}_0\eta_{T-s}\,d\hat\beta_s,\quad t\in[0,T]
\]
(see \cite[p. 176]{ZZ}).
%K:\\
%Mamy
%\begin{align*}
%\int^T_s\eta_s\,d^{\dag}B_s&= \lim
%\sum\bar\eta_{T-t_{k+1}}(B_{t_{k+1}}-B_{t_k})\\
%&=\lim \sum\bar\eta_{T-t_{k+1}}(\tilde B_{T-t_{k+1}}-\tilde
%B_{T-t_k})=\lim \sum\bar\eta_{s_i}(\tilde B_{s_i}-\tilde
%B_{s_{i+1}})\\
%&=-\int^{T-t}_0\eta_{T-s}\,d\tilde B_s.
%\end{align*}
%KK\\
\end{remark}

We are going to show that there exists a unique solution of
(\ref{eq3.1}) under the following assumptions.
\begin{enumerate}
\item[(A1)] $\BE|\xi|^2<\infty$, $\BE\int_0^T|f(t,0)|^2\,dt
+\BE\int_0^T\|g(t,0)\|^2\,dt<\infty$.
\item[(A2)] For every $y\in\BR$, $\int_0^T|f(t,y)|\,dt<\infty$
$\mathbb{P}$-a.s.
\item[(A3)]There exist constants $l,L>0$ and functions
$L_k:\Omega\times[0,T]\rightarrow\mathbb{R}_+$ such that
$\sup_{t\le T}\sum_{k=1}^{\infty}L^2_k(t)\le l$ $P$-a.s. and for
a.e. $t\in[0,T]$,
\begin{enumerate}
\item[(a)] $(f(t,y)-f(t,y'))(y-y')\le L|y-y'|^2$ for every
$y,y'\in\BR$, $\BP$-a.s.,
\item[(b)] $|g_k(t,y)-g_k(t,y')|\le L_k(t)|y-y'|$ for for every
$y,y'\in\BR$, $\BP$-a.s.
\end{enumerate}
\item[(A4)] For a.e. $t\in[0,T]$ the mapping $\BR\ni y\mapsto f(t,y)$
is continuous.
\end{enumerate}

The uniqueness for (\ref{eq3.1}) follows from the following
comparison result.

\begin{proposition}
\label{stw1.1} Let $g$  satisfy \mbox{\rm(A3b)} and either $f$ or
$f'$ satisfy $\mbox{\rm{(A3a)}}$. Let $(Y,M)$  be a solution of
\mbox{\rm BDSDE}$(\xi,f,g)$ and $(Y',M')$ be a solution of
\mbox{\rm BSDE}$(\xi',f',g)$. If $\xi\le\xi'$ $\mathbb{P}$-a.s.
and $f'(t,y)\le f(t,y)$ for a.e. $t\in[0,T]$ and every $y\in\BR$
then
\[
Y'_t\le Y_t,\quad t\in[0,T],\quad\BP\mbox{{-a.s.}}
\]
\end{proposition}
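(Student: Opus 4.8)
The plan is to prove the comparison result by the standard technique of applying Itô's formula to the positive part of the difference of the two solutions and showing it vanishes. Set $\hat Y = Y' - Y$, $\hat M = M' - M$, and $\hat\xi = \xi' - \xi \le 0$. Subtracting the two equations \eqref{eq3.1}, we obtain
\[
\hat Y_t = \hat\xi + \int_t^T\bigl(f'(r,Y'_r)-f(r,Y_r)\bigr)\,dr
+\int_t^T\bigl(g(r,Y'_r)-g(r,Y_r)\bigr)\,d^\dagger\beta_r
-\int_t^T\,d\hat M_r.
\]
Because the stochastic integral here is a \emph{backward} Itô integral, the cleanest route is to pass to the time-reversed setting of Remark \ref{rem2.2}, where $\int_t^T(\cdot)\,d^\dagger\beta_r$ becomes a forward Itô integral against $\hat\beta$ and the driving filtration $(\GG_t)$ plays the role of an auxiliary independent source. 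One then works conditionally, fixing $P$-a.e. $\omega\in\Omega$ so that $\hat M(\omega,\cdot)$ is a genuine square-integrable $(\GG_t)$-martingale (as recorded in the Remark following the definition of $\MM^2$), and applies the Itô–Tanaka formula for $(\hat Y_t^+)^2$ or, more simply, the Meyer–Itô formula to $(\hat Y_t^+)^2$ along the semimartingale $\hat Y$.

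Next I would handle each term produced by the formula. The key is to split the driver difference as
\[
f'(r,Y'_r)-f(r,Y_r) = \bigl(f'(r,Y'_r)-f'(r,Y_r)\bigr) + \bigl(f'(r,Y_r)-f(r,Y_r)\bigr),
\]
and to observe that on the set $\{\hat Y_r>0\}=\{Y'_r>Y_r\}$ the monotonicity assumption (A3a) for $f'$ gives $(f'(r,Y'_r)-f'(r,Y_r))\,\hat Y_r^+\le L(\hat Y_r^+)^2$, while the hypothesis $f'(t,y)\le f(t,y)$ makes the second bracket nonpositive, so it contributes a term bounded above by $0$ after multiplication by $\hat Y_r^+\ge0$. (If instead $f$ satisfies (A3a), one symmetrises by splitting through $f(r,Y'_r)$; the argument is identical.) For the martingale part, after taking expectations the $\int \hat Y_r^+\,d\hat M_r$ term vanishes because $\hat M$ is a square-integrable martingale and $\hat Y^+$ is bounded in $L^2$, and the local-time term at $0$ is nonnegative so it may be dropped to the correct side of the inequality. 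The quadratic-variation contribution from the stochastic-integral term is controlled by the Lipschitz bound (A3b): on $\{\hat Y_r>0\}$,
\[
\|g(r,Y'_r)-g(r,Y_r)\|^2 = \sum_{k=1}^\infty |g_k(r,Y'_r)-g_k(r,Y_r)|^2
\le \Bigl(\sum_{k=1}^\infty L_k^2(r)\Bigr)(\hat Y_r)^2 \le l\,(\hat Y_r^+)^2.
\]

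Assembling these, the expectation $\BE(\hat Y_t^+)^2$ is dominated by $\BE(\hat\xi^+)^2 + C\int_t^T\BE(\hat Y_r^+)^2\,dr$ for a constant $C$ depending on $L$ and $l$. Since $\hat\xi\le0$ $\BP$-a.s. we have $\hat\xi^+=0$, so Gronwall's lemma yields $\BE(\hat Y_t^+)^2=0$ for all $t$, whence $\hat Y_t\le0$, i.e. $Y'_t\le Y_t$, first for a.e. $t$ and then for all $t$ by right-continuity of the paths. The main obstacle I anticipate is bookkeeping the backward Itô integral correctly: one must justify that, after conditioning on $\omega$ and reversing time, the integrand $\hat Y^+$ is adapted to the appropriate forward filtration so that the martingale term has zero expectation, and that the interchange of the infinite sum over $k$ with the stochastic calculus is licit under assumption (A3b) together with $\sup_{t}\sum_k L_k^2(t)\le l$. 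A secondary technical point is that $\hat Y^+$ is only a convex-composed semimartingale, so the appearance of the local time at $0$ must be treated with the Meyer–Itô formula rather than a naive Itô expansion; fortunately this term carries the favourable sign and is simply discarded.
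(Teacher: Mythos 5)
Your proof is, in substance, the paper's proof: the authors also apply the It\^o--Meyer formula to $|(Y'_t-Y_t)^+|^2$, bound the driver term using the inequality $f'\le f$ together with the monotonicity assumption (they split through $f(r,Y'_r)$, i.e.\ treat the case where $f$ satisfies (A3a), noting the other case is analogous --- exactly your argument, symmetrised through $f'(r,Y_r)$), bound the quadratic term coming from $g$ by $l\int_t^T|(Y'_r-Y_r)^+|^2\,dr$ using (A3b), take expectations so that both stochastic-integral terms vanish, and finish with Gronwall's lemma. One remark on the data: both your argument and the paper's tacitly use $(\xi'-\xi)^+=0$, i.e.\ $\xi'\le\xi$; the hypothesis ``$\xi\le\xi'$'' in the statement is evidently a misprint (otherwise the conclusion would fail at $t=T$), and you corrected it silently when you set $\hat\xi\le 0$.

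The one place where your write-up is genuinely weaker than what it claims is the proposed justification of the It\^o--Meyer step. ``Fixing $P$-a.e.\ $\omega$ so that $\hat M(\omega,\cdot)$ is a $(\GG_t)$-martingale'' and then applying Meyer--It\^o does not work as stated: for fixed $\omega$ the process $Y'-Y$ is \emph{not} a $(\GG_t)$-semimartingale, because the frozen backward integral $t\mapsto\int_t^T(g(r,Y'_r)-g(r,Y_r))\,d^{\dagger}\beta_r(\omega)$ is neither of finite variation nor a $(\GG_t)$-martingale in $\omega'$. Moreover, a single time reversal cannot repair this: it turns the $d^{\dagger}\beta$ integral into a forward one but simultaneously turns the $dM$ integral into a backward one, so in either time direction the equation mixes a forward and a backward term. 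Time reversal (Remark \ref{rem2.2}) is the right tool for the \emph{expectation} step --- it shows the $\beta$-integral has zero mean --- but the It\^o--Meyer formula for such doubly stochastic semimartingales must be established jointly, by discretization using the independence of the $\beta$-increments and $\GG_T$; this is the generalized It\^o formula of Pardoux and Peng \cite{PP}, which is what the paper invokes, without comment, under the name ``It\^o--Meyer formula''. Once your conditioning argument is replaced by an appeal to that formula, your proof coincides with the paper's.
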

\begin{proof}
We assume that $f$ satisfies (A2). In case $f'$ satisfies (A2) the
proof is analogous. By the It\^o-Meyer formula,
\begin{align*}
&|(Y_t'-Y_t)^+|^2+\int_t^T\mathbf{1}_{\{Y'_r>Y_r\}}\,d[M'-M]_r\le
2\int_t^T(Y'_r-Y_r)^+(f'(r,Y_r')-f(r,Y_r))\,dr\\
&\qquad+2\int_t^T(Y'_{r-}-Y_{r-})^+
(g(r,Y'_r)-g(r,Y_r))\,d^{\dagger}\beta_r
-2\int_t^T(Y'_{r-}-Y_{r-})^+\,d(M'-M)_r
\\&\qquad+\sum^{\infty}_{k=1}\int_t^T
\mathbf{1}_{\{Y'_r>Y_r\}}|g_k(r,Y'_r)-g_k(r,Y_r)|^2\,dr.
\end{align*}
By the assumptions,
\[
\int_t^T(Y'_r-Y_r)^+(f'(r,Y'_r)-f(r,Y_r))\,dr
\le L\int_t^T|(Y'_r-Y_r)^+|^2\,dr
\]
and
\begin{align*}
\sum^{\infty}_{k=1}\int_t^T
\mathbf{1}_{\{Y'_r>Y_r\}}|g_k(r,Y'_r)-g_k(r,Y_r)|^2\,dr
&\le
\sum^{\infty}_{k=1}\int_t^TL^2_k(r)
\mathbf{1}_{\{Y'_r>Y_r\}}|Y'_r-Y_r|^2\,dr\\
&\le l\int_t^T|(Y'_r-Y_r)^+|^2\,dr.
\end{align*}
Hence
\[
\mathbb{E}|(Y'_t-Y_t)^+|^2\le2(L+l)\mathbb{E}\int_t^T|(Y'_r-Y_r)^+|^2\,dr,
\quad t\in[0,T],
\]
so applying Gronwall's lemma we get the desired result.
\end{proof}

\begin{corollary} \label{wn1.1} Let assumption
\mbox{\rm(A3)} hold. Then there exists at most one solution
of \mbox{\rm BDSDE}$(\xi,f,g)$.
\end{corollary}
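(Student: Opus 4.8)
The plan is to derive uniqueness directly from the comparison result in Proposition~\ref{stw1.1}. The key observation is that under assumption (A3), a single coefficient $f$ satisfies both (A3a) and its symmetric analogue: if $(f(t,y)-f(t,y'))(y-y')\le L|y-y'|^2$ holds for all $y,y'$, then taking $f'=f$ we have both hypotheses of the comparison proposition fulfilled simultaneously, since the same inequality serves for $f$ and for $f'$.

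So suppose $(Y,M)$ and $(Y',M')$ are both solutions of $\mbox{\rm BDSDE}(\xi,f,g)$ with the \emph{same} data $\xi,f,g$. First I would apply Proposition~\ref{stw1.1} with the roles as stated: since $\xi\le\xi'$ (here with equality, $\xi=\xi'$) and $f'(t,y)=f(t,y)\le f(t,y)$ holds trivially, the proposition yields $Y'_t\le Y_t$ for all $t\in[0,T]$, $\BP$-a.s. Then I would swap the roles of the two solutions, applying the proposition again with $(Y,M)$ playing the role of $(Y',M')$ and vice versa; this gives $Y_t\le Y'_t$ for all $t$, $\BP$-a.s. Combining the two inequalities yields $Y_t=Y'_t$ for all $t\in[0,T]$, $\BP$-a.s.

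It remains to conclude that the martingale parts coincide. Once $Y=Y'$, equation (\ref{eq3.1}) written for each solution and subtracted gives $\int_t^T d(M-M')_r=0$ for all $t$, since the driver terms $\int_t^T f(r,Y_r)\,dr$, $\int_t^T g(r,Y_r)\,d^\dagger\beta_r$ and the terminal value $\xi$ are identical. Hence $M_t-M'_t$ is constant in $t$, and since both martingales start from $0$ (by definition of $\MM^2$, $M_0=M'_0=0$), we get $M=M'$. This completes the argument that $(Y,M)=(Y',M')$.

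The only point requiring mild care is the verification that Proposition~\ref{stw1.1} applies with a single $f$ serving as both $f$ and $f'$: one must check that the hypothesis ``either $f$ or $f'$ satisfies (A3a)'' together with ``$g$ satisfies (A3b)'' is met, which it is by assumption (A3) here, and that the technical condition (A2) used in the proof of the proposition is available. Since this is the step where the two-sided nature of the monotonicity estimate is exploited, I expect no genuine obstacle—the result is essentially immediate once the comparison proposition is in hand, and the bulk of the work has already been absorbed into the It\^o--Meyer/Gronwall argument proving Proposition~\ref{stw1.1}.
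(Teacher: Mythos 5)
Your proposal is correct and is exactly the argument the paper intends: the paper states that uniqueness ``follows from the comparison result'' and leaves the details implicit, namely applying Proposition \ref{stw1.1} twice with $\xi=\xi'$, $f=f'$ to get $Y=Y'$, after which $M=M'$ follows from subtracting the two equations and using $M_0=M'_0=0$. The only cosmetic remark is that the reference to (A2) in the proof of Proposition \ref{stw1.1} is evidently a misprint for (A3a), so no condition beyond (A3) is actually needed.
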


\begin{proposition}
\label{stw1.2} Assume $\mbox{\rm{(A1), (A3)}}$. Let $(Y,M)$ be a
solution of $\mbox{\rm{BDSDE}}(\xi,f,g)$. Then there exists $c>0$
depending only on $T,l,L$ such that
\begin{align*}
\BE\Big(\sup_{t\le T}|Y_t|^2+\int_0^T\,d[M]_t
&+\sup_{0\le t\le T}
|\int_0^t f(r,Y_r)\,dr|^2\Big)\\
&\quad\le c\BE\Big(|\xi|^2+\int_0^T(|f(t,0)|^2
+\|g(t,0)\|^2)\,dt\Big).
\end{align*}
\end{proposition}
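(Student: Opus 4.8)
The plan is to apply the It\^o--Meyer formula to $|Y_t|^2$ (exactly as in the proof of Proposition \ref{stw1.1}) and then combine the monotonicity and Lipschitz bounds with Gronwall's lemma and the Burkholder--Davis--Gundy (BDG) inequality. Applying the formula to the c\`adl\`ag process $Y$ solving (\ref{eq3.1}) between $t$ and $T$ yields
\begin{align*}
|Y_t|^2+\int_t^T d[M]_r &=|\xi|^2+2\int_t^T Y_rf(r,Y_r)\,dr +\int_t^T\|g(r,Y_r)\|^2\,dr\\
&\quad+2\int_t^T Y_{r-}g(r,Y_r)\,d^{\dagger}\beta_r -2\int_t^T Y_{r-}\,dM_r,
\end{align*}
where $\int_t^T\|g(r,Y_r)\|^2\,dr$ is the quadratic variation contributed by the backward It\^o integral, and the cross variation between that integral and $M$ vanishes by the independence of $\beta$ and $(\GG_t)$.

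Next I would insert the structural bounds. From (A3a) with $y'=0$, $2Y_rf(r,Y_r)\le 2L|Y_r|^2+2Y_rf(r,0)\le(2L+1)|Y_r|^2+|f(r,0)|^2$, and from (A3b), $\|g(r,Y_r)\|^2\le 2l|Y_r|^2+2\|g(r,0)\|^2$. Taking $\BE$ (the two stochastic integrals are martingales of zero mean after localization) and dropping the nonnegative $[M]$ term gives, for a constant $c_1$ depending on $L,l$,
\begin{align*}
\BE|Y_t|^2\le\BE|\xi|^2 +c_1\BE\int_t^T|Y_r|^2\,dr +\BE\int_t^T(|f(r,0)|^2+2\|g(r,0)\|^2)\,dr.
\end{align*}
Gronwall's lemma then bounds $\sup_{t\le T}\BE|Y_t|^2$ by the right-hand side of the asserted estimate, and putting $t=0$ in the displayed It\^o identity (now keeping the $[M]$ term) simultaneously bounds $\BE\int_0^T d[M]_r$.

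To upgrade $\sup_t\BE|Y_t|^2$ to $\BE\sup_t|Y_t|^2$ I would return to the It\^o identity, take $\sup_{t\le T}$ and then $\BE$. By BDG, $\BE\sup_t|\int_t^T Y_{r-}g(r,Y_r)\,d^{\dagger}\beta_r|\le c\,\BE(\sup_t|Y_t|\,(\int_0^T\|g(r,Y_r)\|^2\,dr)^{1/2})$ and likewise for the $M$-integral, whose bracket is $\int_0^T|Y_{r-}|^2\,d[M]_r\le\sup_t|Y_t|^2\,[M]_T$. Young's inequality $ab\le\varepsilon a^2+\varepsilon^{-1}b^2$ lets me absorb a small multiple $\varepsilon\BE\sup_t|Y_t|^2$ into the left-hand side, leaving the already-controlled quantities $\BE\int_0^T\|g\|^2\,dr$ and $\BE[M]_T$. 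For the last term, I would rearrange (\ref{eq3.1}) to write $\int_0^t f(r,Y_r)\,dr=(Y_0-Y_t)-\int_0^t g(r,Y_r)\,d^{\dagger}\beta_r+M_t$, so that $\sup_t|\int_0^t f\,dr|^2$ is controlled by $\sup_t|Y_t|^2$, by $\sup_t|\int_0^t g\,d^{\dagger}\beta|^2$ (bounded via BDG by $c\BE\int_0^T\|g\|^2\,dr$), and by $\sup_t|M_t|^2$ (bounded by $c\BE[M]_T$), all already estimated.

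The delicate points are the bookkeeping of the quadratic-variation correction from the backward It\^o integral and the vanishing of its cross term with $M$, together with the absorption step: since $\int_0^T\|g(r,Y_r)\|^2\,dr$ grows quadratically in $Y$, one must check that the factor $2l$ and the BDG constants combine so that $\varepsilon$ can be taken small enough to absorb the $\sup_t|Y_t|^2$ contribution. Note also that monotonicity yields only the one-sided bound on $Y_rf(r,Y_r)$, which is precisely the favorable sign needed, and that no growth bound on $f$ is available; this is exactly why the last quantity is stated as $\sup_t|\int_0^t f\,dr|^2$ and is handled through the equation rather than by estimating $|f(r,Y_r)|$ directly.
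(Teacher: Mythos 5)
Your proof is correct and takes essentially the same route as the paper: the It\^o--Meyer identity for $|Y_t|^2$, the one-sided monotonicity and Lipschitz bounds from (A3), Gronwall's lemma to control $\sup_t\BE|Y_t|^2$ and $\BE[M]_T$, and then Burkholder--Davis--Gundy. The only difference is that you spell out what the paper compresses into its final sentence, namely the BDG/Young absorption step giving $\BE\sup_t|Y_t|^2$ and the rearrangement of the equation itself to bound $\sup_t|\int_0^t f(r,Y_r)\,dr|^2$ (which is indeed the only viable way, since no growth bound on $f$ is available).
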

\begin{proof}
By the It\^o-Meyer formula and (A3),
\begin{align*}
|Y_t|^2+\int_t^T\,d[M]_r&=|\xi|^2+2\int_t^Tf(r,Y_r)Y_r\,dr
+\int_t^T\|g(r,Y_r)\|^2\,dr\\
&\quad+2\int_t^TY_rg(r,Y_r)\,d^\dagger\beta_r
-2\int_t^TY_{r-}\,dM_r\\
&\le|\xi|^2+(1+2L+2l)\int_t^T|Y_r|^2\,dr
+\int_t^T(|f(r,0)|^2+2\|g(r,0)\|^2)\,dr \\
&\quad+2\int_t^TY_rg(r,Y_r)\,d^\dagger \beta_r
-2\int_t^TY_{r-}\,dM_r,\quad t\in[0,T],\quad \BP\mbox{-a.s.}
\end{align*}
From this and Gronwall's lemma,
\[
\BE|Y_t|^2+\BE\int_0^T\,d[M]_t\le c(L,l,T)\BE\Big(|\xi|^2
+\int_0^T(|f(t,0)|^2+\|g(t,0)\|^2)\,dt\Big).
\]
Applying the Burkholder-Davis-Gundy inequality we get the desired
result.
\end{proof}

\begin{theorem}
\label{tw1.1} Let assumptions \mbox{\rm(A1)--(A4)} hold. Then
there exists a solution $(Y,M)$ of \mbox{\rm BDSDE}$(\xi,f,g)$.
\end{theorem}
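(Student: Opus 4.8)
The plan is to establish existence first under the extra assumption that $f(t,\cdot)$ is globally Lipschitz, and then to remove this assumption by an approximation argument based on the comparison principle (Proposition \ref{stw1.1}) and the a priori estimate (Proposition \ref{stw1.2}) already at our disposal.

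\emph{Step 1 (Lipschitz $f$).} Assume in addition that $|f(t,y)-f(t,y')|\le K|y-y'|$ for all $y,y'$, $\BP$-a.s. I would solve the equation by a fixed point argument on $\SM^2$. Given $V\in\SM^2$, the variable $\Xi(V)=\xi+\int_0^Tf(r,V_r)\,dr+\int_0^Tg(r,V_r)\,d^\dagger\beta_r$ is square integrable by (A1) and (A3) and is $\FF^\beta_T\vee\GG_T$-measurable, so I set $\tilde M_t=\BE[\Xi(V)\mid\FF^\beta_T\vee\GG_t]$, $M_t=\tilde M_t-\tilde M_0\in\MM^2$, and
\[
Y_t=\tilde M_t-\int_0^tf(r,V_r)\,dr-\int_0^tg(r,V_r)\,d^\dagger\beta_r .
\]
A direct computation shows that $(Y,M)$ solves the linear BDSDE obtained by freezing $V$ in the coefficients; the less obvious point is that $Y_t$ is $\FF_t$-measurable, which follows from the adaptedness of $V$ and the measurability properties of the backward It\^o integral underlying Remark \ref{rem2.2}. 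Writing $\Phi(V)=Y$, I would apply the It\^o-Meyer computation of Proposition \ref{stw1.2} to the difference of two images and, after inserting an exponential weight $e^{\gamma t}$ (or arguing on a short interval and pasting), conclude that $\Phi$ is a contraction. Its unique fixed point, with the associated martingale, is the required solution.

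\emph{Step 2 (approximation).} For $f$ satisfying only (A2)--(A4) I would regularize in $y$ by mollifying $f(t,\cdot)$ and composing with a truncation of its argument. This yields functions $f_n(t,\cdot)$ that are globally Lipschitz, satisfy (A3a) with the same constant $L$, verify $f_n(t,0)\to f(t,0)$, and converge to $f$ locally uniformly; moreover the construction can be arranged so that $n\mapsto f_n$ is monotone. By Step 1 each $\mathrm{BDSDE}(\xi,f_n,g)$ has a solution $(Y^n,M^n)$, and since the right-hand side of the estimate in Proposition \ref{stw1.2} depends on $f_n$ only through $f_n(\cdot,0)$, that estimate provides a bound on
\[
\BE\Big(\sup_{t\le T}|Y^n_t|^2+[M^n]_T+\sup_{t\le T}\Big|\int_0^tf_n(r,Y^n_r)\,dr\Big|^2\Big)
\]
uniform in $n$. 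The comparison principle applied to consecutive indices then shows that $(Y^n)$ is monotone, hence converges $\BP\otimes dt$-a.e. to a limit $Y$.

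The step I expect to be the main obstacle is the passage to the limit in the equation, precisely because no growth condition is imposed on $f$: a priori the random variables $f_n(r,Y^n_r)$ need not be uniformly integrable. The remedy is to exploit the uniform bound on $\sup_t|\int_0^tf_n(r,Y^n_r)\,dr|^2$ from Step 2 together with the local uniform convergence $f_n\to f$ and the continuity (A4). Along the a.e. convergent sequence $Y^n\to Y$ one obtains $f_n(r,Y^n_r)\to f(r,Y_r)$ for a.e. $(r,\omega,\omega')$, and a truncation combined with a Vitali/Fatou argument upgrades this to convergence of $\int_t^Tf_n(r,Y^n_r)\,dr$ and yields the integrability condition (a) of the definition. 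Feeding this back into the It\^o-Meyer estimate for $Y^n-Y^m$ and using (A3) shows that $(Y^n)$ is Cauchy in $\SM^2$ and $(M^n)$ in $\MM^2$, with limits $Y$ and $M$; the term $\int_t^Tg(r,Y^n_r)\,d^\dagger\beta_r$ converges by the Lipschitz bound (A3b). Letting $n\to\infty$ in $\mathrm{BDSDE}(\xi,f_n,g)$ then identifies $(Y,M)\in\SM^2\times\MM^2$ as a solution of $\mathrm{BDSDE}(\xi,f,g)$.
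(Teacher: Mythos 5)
Your Step~1 is, modulo packaging (a Banach fixed point with exponential weight versus the paper's Picard iteration on short subintervals, both resting on the same linear solution built by conditioning on $\FF^{\beta}_T\vee\GG_t$), exactly the paper's Steps~1--2, and your overall plan for general $f$ --- Lipschitz approximation, comparison (Proposition \ref{stw1.1}) to get monotonicity of $Y^n$, uniform bounds from Proposition \ref{stw1.2}, then a truncation-at-level-$\eta$ argument to pass to the limit --- is also the paper's. The genuine gap is your construction of the approximating sequence. Mollifying $f(t,\cdot)$ and truncating its argument fails on three counts. (i) The resulting Lipschitz ``constant'' is controlled only by the size of $f(t,\cdot)$ near the truncation window, hence is a random quantity in $(\omega,\omega',t)$ which is not bounded under (A1)--(A4); your Step~1 requires a genuine uniform Lipschitz constant, so it cannot be applied to these $f_n$. (ii) The monotonicity of $n\mapsto f_n$, which you assert ``can be arranged'', is false in general for mollifications (think of an oscillating $f$), and it is the linchpin of the whole scheme: without a pointwise ordering $f_n\le f_{n+1}$, Proposition \ref{stw1.1} gives no ordering of the $Y^n$, hence no a.e.\ limit $Y$, and proving that $(Y^n)$ is Cauchy directly is precisely what one cannot do for non-Lipschitz monotone $f$. (iii) The uniform a priori bound needs $\sup_n\BE\int_0^T|f_n(t,0)|^2\,dt<\infty$, and your pointwise convergence $f_n(t,0)\to f(t,0)$ does not give it: for $f(t,y)=-c(t)y^+$ with $\int_0^Tc(t)\,dt<\infty$ a.s.\ but $\BE\int_0^Tc^2(t)\,dt=\infty$ (this $f$ satisfies (A1)--(A4) with $f(\cdot,0)=0$), any symmetric mollification gives $|f_n(t,0)|=\kappa\varepsilon_n c(t)$, which has infinite $L^2(\Omega\times[0,T])$-norm for every $n$, so Proposition \ref{stw1.2} yields no bound at all.

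The paper's construction repairs all three defects at once: the inf-convolution $f_n(t,y)=\inf_{x\in\mathbb{Q}}\{n|y-x|+f(t,x)-Lx\}+Ly$ is $(L+n)$-Lipschitz with a \emph{deterministic} constant, is increasing in $n$, satisfies (A3a) with the same $L$, and obeys $\lambda\le f_n\le f$, which gives the uniform $L^2$ bound on $f_n(\cdot,0)$. Its price is that it requires $f$ bounded from below, which is exactly why the paper has a fourth step that your proposal omits: one first solves for $f\vee(-n)$ (a decreasing sequence of drivers, again handled by comparison) and only then applies the inf-convolution stage. If you replace mollification by inf-convolution you must add this two-stage structure. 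Finally, in the limit passage the paper deliberately avoids proving that $(Y^n,M^n)$ is Cauchy in $\SM^2\times\MM^2$ --- your route would force you to estimate terms like $\BE\int_t^T|Y^n_r-Y^m_r|\,|f_n(r,Y^m_r)-f_m(r,Y^m_r)|\,dr$ with no integrability of $f$ away from $y=0$ --- and instead passes to the limit in the representation $Y^n_t=\BE\bigl(\xi+\int_t^Tf_n(r,Y^n_r)\,dr+\int_t^Tg(r,Y^n_r)\,d^{\dagger}\beta_r\,\big|\,\FF_t\bigr)$, defining $M$ directly as a conditional expectation; you should do the same.
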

\begin{proof}
Step 1. Assume that $g(t,Y_t)=g(t)$, $f(t,Y_t)=f(t)$, $t\in[0,T]$.
We first prove that there exists a solution $(Y,M)$ of the linear
equation
\begin{equation}
\label{eq3.3} Y_t=\xi+\int_t^Tf(r)\,dr+\int_t^Tg(r)\,d^\dagger
\beta_r-\int_t^T\,dM_r, \quad t\in[0,T],\quad\BP\mbox{-a.s.}
\end{equation}
To this end, let us set $\mathcal{A}_t=\FF_T^{\beta}\vee\GG_t$,
$t\in[0,T]$, and define $Y,M$ as
\[
Y_t=\BE\Big(\xi+\int_t^Tf(r)\,dr +\int_t^Tg(r)\,d^\dagger
\beta_r|\mathcal{A}_t\Big),\quad t\in[0,T]
\]
and
\begin{align*}
M_t&=\mathbb{E}\Big(\xi+\int_0^Tf(r)\,dr+\int_0^Tg(r)\,d\beta_r^\dagger
|\mathcal{A}_t\Big)-Y_0\\
&=E\Big(\xi+\int_0^Tf(r)\,dr
+\int_0^Tg(r)\,d\beta_r^\dagger|\mathcal{G}_t\Big)-Y_0,
\quad t\in[0,T].
\end{align*}
One can check that the pair $(Y,M)$  satisfies (\ref{eq3.3}). To
show that $Y,M$ are $(\FF_t)$-adapted, let us set
\begin{equation}
\label{eq00.00}
\Lambda_t=\xi+\int_t^Tf(r)\,dr+\int_t^Tg(r)\,d^\dagger
\beta_r,\quad t\in[0,T].
\end{equation}
With this notation we have
\begin{equation}
\label{eq2.03} Y_t=\mathbb{E}(\Lambda_t|\mathcal{A}_t)
=\mathbb{E}(\Lambda_t|\GG_t\vee\FF_T^{\beta})
=\BE(\Lambda_t|\GG_t\vee\FF_{t,T}^{\beta}\vee\FF_t^{\beta})
=\BE(\Lambda_t|\FF_t\vee\FF_t^{\beta}).
\end{equation}
Since $f,g$ are measurable processes that are
$(\GG_T\vee\FF_{t,T}^\beta)$-adapted, they have modifications
which are $(\GG_T\vee\FF_{t,T}^\beta)$-progressively measurable.
Therefore the integrals on the right-hand side of (\ref{eq00.00})
are $(\GG_T\vee\FF_{t,T}^\beta)$-adapted. Hence
\[
\sigma(\Lambda_t)
\subset\GG_T\vee(\GG_T\vee\FF_{t,T}^B)=\GG_T\vee\FF_{t,T}^{\beta}.
\]
It follows in particular  that $\sigma(\Lambda_t)\vee\FF_t$ is
independent of $\FF_t^{\beta}$. By this, \cite[Proposition 5.6]{K}
and (\ref{eq2.03}),
\[
Y_t=\BE(\Lambda_t|\FF_t\vee\FF_t^{\beta})=\BE(\Lambda_t|\FF_t).
\]
Thus $Y$ is $(\FF_t)$-adapted. That $M$ is $(\FF_t)$-adapted now
follows from the equality
\[
M_t=Y_t-Y_0+\int_0^tf(r)\,dr+\int_0^tg(r)\,d^\dagger \beta_r.
\]
Step 2. Assume that $f$ is Lipschitz continuous in $y$ with
Lipschitz constant $L$. Let $(Y^0,M^0)=(0,0)$ and let
$(Y^{n+1},M^{n+1})$ be a solution of the equation
\begin{align}
\label{eq1.2}
Y^{n+1}_t&=\xi+\int_t^Tf(r,Y_r^n)\,dr+\int_t^Tg(r,Y_r^n)\,d^\dagger
\beta_r \\
&\quad+ \int_t^T\,dM_r^{n+1},\quad t\in[0,T],\quad\BP\mbox{-a.s.}
\nonumber
\end{align}
The sequence $\{(Y^n,M^n)\}$ is well defined, i.e. if
$(Y^n,M^n)\in\mathcal{S}^2\otimes \MM^2$ then $(Y^{n+1},M^{n+1})$
exists and belongs to $\mathcal{S}^2\otimes\MM^2$. To see this,
assume that $(Y^n,M^n)\in\mathcal{S}^2\otimes\MM^2$. Then
\[
\BE\int_0^T|f(t,Y_t^n)|^2\,dr\le 2 L^2\BE\int_0^T|Y_t^n|^2\,dt
+2\BE\int_0^T|f(t,0)|^2\,dt<\infty,
\]
\[
\BE\int_0^T\|g(t,Y_t^n)\|^2\,dt \le2 L^2\BE\int_0^T|Y_t^n|^2\,dt
+2\BE\int_0^T\|g(t,0)\|^2\,dt<\infty.
\]
Therefore by Step 1 there exists a solution $(Y^{n+1},M^{n+1})\in
\mathcal{S}^2\otimes\MM^2$ of (\ref{eq1.2}). By the It\^o-Meyer
formula and the assumption on $f$,
\begin{align}
\label{eq1.3}  &|Y^{n+1}_t-Y_t^n|^2+\int_t^T\,d[M^{n+1}-M^n]_r\\
&\qquad=
2\int_t^T(Y^{n+1}_r-Y^n_r)(f(r,Y_r^n)-f(r,Y_r^{n-1}))\,dr\nonumber\\
&\qquad\quad-2\int_t^T(Y^{n+1}_{r-}-Y^n_{r-})\,d(M^{n+1}-M^n)_r\nonumber\\
&\qquad\quad+2\int_t^T(Y^{n+1}_r-Y^n_r)(g(r,Y_r^n)-g(r,Y_r^{n-1}))\,d^\dagger
\beta_r \nonumber\\
&\qquad\quad+\int_t^T\|g(r,Y_r^n)-g(r,Y_r^{n-1})\|^2\,dr\nonumber\\
&\qquad\le \int_t^T|Y^{n+1}_r-Y_r^n|^2\,dr
+L^2\int_t^T|Y^{n}_t-Y_r^{n-1}|^2\,dr\nonumber\\
&\qquad\quad+l^2\int_t^T|Y^{n}_r-Y_r^{n-1}|^2\,dr
-2\int_t^T(Y^{n+1}_{r-}-Y_{r-}^n)\,d(M^{n+1}-M^n)_r\nonumber\\
&\qquad\quad-2\int_t^T(Y^{n+1}_r-Y_r^n)
(g(r,Y_r^n)-g(r,Y_r^{n-1}))\,d^\dagger \beta_r. \nonumber
\end{align}
Taking the expectation of  both sides of the above inequality and
using Gronwall's lemma we get
\[
\BE|Y^{n+1}_t-Y_t^n|^2+\BE\int_t^T\,d[M^{n+1}-M^n]_r \le
C\BE\int_t^T|Y^{n}_r-Y_t^{n-1}|^2\,dr,\quad t\in[0,T]
\]
for some $C$ depending only on $T,L,l$. Hence
\[
\sup_{t\le r\le T}\BE|Y^{n+1}_r-Y_r^n|^2
+\BE\int_t^T\,d[M^{n+1}-M^n]_r \le C(T-t)\sup_{t\le r\le
T}\BE|Y^n_r-Y^{n-1}_r|^2.
\]
Write
\[
\|(Y,M)\|^2_t=\sup_{t\le r\le T}\BE|Y_r|^2+\BE\int_t^Td[M]_r.
\]
With this notation we have
\begin{align*}
&\|(Y^{n+1},M^{n+1})-(Y^n,M^n)\|_{t}\\
&\qquad\le C(T-t)^{1/2}\|(Y^n,M^n)-(Y^{n-1}-M^{n-1})\|_{t}, \quad
n\ge 1,\quad t\in[0,T]
\end{align*}
From this one can deduce that
\[
\|(Y^{n+k},M^{n+k})-(Y^n,M^n)\|_{t_1}\le2^{-(n+1)}\|(Y^1,M^1)\|_{t_1}
\]
for $t_1\in[0,T]$ such that $C(T-t_1)=2^{-1}$.  Therefore dividing
the interval $[0,T]$ into small intervals and using a standard
argument one can show that
\[
\lim_{n,m\rightarrow\infty}\|(Y^m,M^m)-(Y^n-M^n)\|_0=0.
\]
Using the Burkholder-Davis-Gundy inequality we conclude from the
above  convergence and (\ref{eq1.3}) with $(Y^{n+1},M^{n+1})$
replaced by $(Y^m,M^m)$ that
\[
\lim_{n,m\rightarrow\infty}\BE\Big(\sup_{0\le t\le T}
|Y^n_t-Y^m_t|^2+\int_0^T\,d[M^n-M^m]_t\Big)=0.
\]
Let $(Y,M)$ be the limit of $\{(Y^n,M^n)\}$ in $\SM^2\times\MM^2$.
Letting $n\rightarrow\infty$ in (\ref{eq1.2}) shows that $(Y,M)$
is a solution of BDSDE$(\xi,f,g)$.
\smallskip\\
Step 3. Now we assume that $f$ satisfies the assumptions of the
theorem and moreover there exists $\lambda\in\BR$ such that
$f(t,y)\ge\lambda$ for a.e. $t\in[0,T]$ and every $y\in\BR$. Put
\[
f_n(t,y)=\inf_{x\in\mathbb{Q}}\{n|y-x|
+f(t,x)-Lx\}+Ly,\quad t\in[0,T],\quad y\in\BR.
\]
It is an elementary check that $f_n$ has the following properties:
for a.e. $t\in[0,T]$,
\begin{enumerate}
\item[(a)] $|f_n(t,y)-f_n(t,y')|\le(L+n)|y-y'|$ for all  $y,y'\in\BR$,
\item[(b)] $\lambda\le f_n(t,y)\le f(t,y)$ for every $y\in\BR$,
\item[(c)] $f_n(t,\cdot)\nearrow f(t,\cdot)$ uniformly on compact
subsets of $\BR$,
\item[(d)] $(f_n(t,y)-f_n(t,y'))(y-y')\le L|y-y'|^2$ for all
$y,y'\in\BR$.
\end{enumerate}
By Step 2, for each $n\ge 1$ there is a solution
$(Y^n,M^n)\in\mathcal{S}^2\otimes\MM^2$ of BDSDE$(\xi,f_n,g)$. By
Proposition \ref{stw1.1},
\begin{equation}
\label{eq1.5}
Y^n_t\le Y^{n+1}_t,\quad t\in[0,T], \quad\BP\mbox{-a.s.},\quad n\ge1.
\end{equation}
Put $Y_t=\sup_{n\ge 1}Y_t^n$. By Proposition \ref{stw1.2} and (b),
\begin{align}
\label{eq1.4} \BE\sup_{t\le T}|Y_t^n|^2+\BE[M^n]_T
&\le
C\BE\Big(|\xi|^2+\int_0^T(|f_n(t,0)|^2 +\|g(t,0)\|^2)\,dt\Big)
\\&\le C\BE\Big(|\xi|^2+\lambda^2+\int_0^T(|f(t,0)|^2
+\|g(t,0)\|^2)\,dt \Big). \nonumber
\end{align}
For every $\varepsilon,\eta>0$ we have
\begin{align*}
&\BP\Big(\int_0^T|f_n(t,Y^n_t)-f(t,Y_t)|\,dt>\varepsilon\Big)\\
&\quad\le
\BP\Big(\int_0^T|f_n(t,Y^n_t)-f(t,Y_t)|\,dt>\varepsilon,\,\,\sup_{n\ge1}
\sup_{t\le T}|Y^n_t|\le\eta\Big)+\BP(\sup_{n\ge 1}\sup_{t\le T}
|Y^n_t|\ge\eta)\\
&\quad\le\BP
\Big(\int_0^T|f_n(t,Y^n_t)-f(t,Y_t)|\,dt>\varepsilon,\,\,\sup_{n\ge1}
\sup_{t\le T}|Y^n_t|\le\eta\Big)\\
&\qquad+\eta^{-2}(\BE\sup_{t\le T} |Y^1_t|^2+\BE\sup_{t\le T}
|Y_t|^2),
\end{align*}
the last inequality being a consequence of (\ref{eq1.5}) and
Chebyshev's inequality. By (b), (c) and (A2) the first term on the
right-hand side of the above inequality tends to zero as
$n\rightarrow\infty$. The second one tends to zero as
$\eta\rightarrow\infty$ thanks to (\ref{eq1.4}). Hence
\begin{equation}
\label{eq1.6} \sup_{0\le t\le T}
|\int_0^t(f_n(r,Y^n_r)-f(r,Y_r))\,dr|\rightarrow 0
\end{equation}
in probability $\BP$ as $n\rightarrow\infty$.  By (\ref{eq1.5}),
(\ref{eq1.4}) and (A3) we also have
\begin{align}
\label{eq1.7} \BE\sup_{0\le t\le T}|
\int_0^t(g(r,Y^n_r)-g(r,Y))\,d^\dagger\beta_r|^2 &\le
4\BE\sum^{\infty}_{k=1}
\int_0^T|g_k(t,Y^n_t)-g_k(t,Y_t)|^2\,dt\\
&\le 4l\BE\int_0^T|Y_t^n-Y_t|^2\,dt, \nonumber
\end{align}
which converges to zero as $n\rightarrow\infty$. By Proposition
\ref{stw1.2},
\[
\sup_{n\ge 1}\BE\sup_{0\le t\le T}|\int_0^t
f_n(r,Y^n_r)\,dr|^2<\infty.
\]
Therefore letting
$n\rightarrow\infty$ in the equation
\[
Y^n_t=\mathbb{E}(\xi+\int_t^Tf_n(r,Y^n_r)\,dr
+\int_t^Tg(r,Y^n_r)\,d^\dagger\beta_r|\FF_t),\quad t\in[0,T]
\]
shows that $(Y,M)$, where
\[
M_t=\BE\Big(\xi+\int_0^Tf(r,Y_r)\,dr +\int_0^Tg(r,Y_r)\,d^\dagger
\beta_r|\FF_t\Big)-Y_0,
\]
is a solution of BDSDE$(\xi,f,g)$.
\smallskip\\
Step 4. We now show how to dispense with the assumption that $f$
is bounded from below. Let $f_n=f\vee(-n)$. By  Step 3, for each
$n\ge 1$ there exists a solution $(Y^n,M^n)$ of
BDSDE$(\xi,f_n,g)$. By Proposition \ref{stw1.1}, $Y_t^n\ge
Y_t^{n+1}$, $t\in[0,T]$, $\BP$-a.s. for $n\ge1$, whereas by
Proposition \ref{stw1.2},
\[
\BE\sup_{t\le T}|Y_t^n|^2+\BE\int_0^T\,d[M^n]_t\le C\BE
\Big(|\xi|^2+\int_0^T(|f(t,0)|^2+\|g(t,0)\|^2)\,dt\Big).
\]
Using the above properties of the processes $Y^n$ one can show in
much the same way as in Step 3 that (\ref{eq1.6}), (\ref{eq1.7})
hold true and then that there exists a solution $(Y,M)$ of
BDSDE$(\xi,f,g)$.
\end{proof}

\section{SPDEs and Markov-type BDSDEs}
\label{sec3}

In this section we first consider Markov-type BDSDEs. Roughly
speaking, these are  BDSDEs of the form (\ref{eq3.1}) with
filtration $(\GG_t)$ generated by some Markov process
$\BBM=(\BBX,P_z)$ and with final condition $\xi$ and coefficients
$f,g$ depending on $\omega'$ only through $\BBX(\omega')$. In our
paper $\BBM$ is a Markov process associated with a time-dependent
Dirichlet form. Using results of Section \ref{sec3} we prove the
existence and uniqueness of solutions of BDSDEs associated with
$\BBM$ for $\xi,f,g$ satisfying some ``markovian'' analogue of
conditions (A1)--(A4). Then we use this result to prove the
existence and uniqueness of solutions of SPDE of the form
(\ref{eq0.0}) with operator $\frac{\partial}{\partial t}+A_t$
associated with the underlying Dirichlet form.

\subsection{Dirichlet forms and Markov processes}
\label{sec3.1}

In what follows,  $E$ is a locally compact separable metric space
and $m$ is an everywhere dense Radon measure on $E$. By $\Delta$
we denote the one-point compactification of $E$. If $E$ is already
compact then we adjoin $\Delta$ to $E$ as an isolated point. We
set $E^1=\BR\times E$, $E_T=[0,T]\times E$, $E_{0,T}=(0,T]\times
E$, $m^1=l^1\otimes m$, $m_T=l^1|_{[0,T]}\otimes m$, where $l^1$
is the one-dimensional Lebesgue measure. We adopt the convention
that every function $\varphi$ on $E$ is extended to $E^1$ by
putting $\varphi(t,x)=\varphi(x)$, $(t,x)\in E^1$, and every
function $f$ on $E^1$ is extended to $E^1\cup\{\Delta\}$ by
putting $f(\Delta)=0$. Similarly, we extend functions defined on
$E_{0,T}$ or on $E_T$ to functions on $E^1\cup\{\Delta\}$ by
putting $f(z)=0$ outside $E_{0,T}$ or $E_T$, respectively.

We assume that we are given a family $\{B^{(t)},t\in\BR\}$ of
regular Dirichlet forms on $H=L^2(E;m)$ with sector constant
independent of $t$ and common domain $V\subset H$ (see, e.g.,
\cite{MR,Stannat} for the definition). We also assume that
\begin{enumerate}
\item[(a)] $\BR\ni t\mapsto B^{(t)}(\varphi,\psi)$ is measurable for
every $\varphi,\psi\in V$,

\item[(b)] there exist $c_1,c_2>0$ such that
$c_1B^{(0)}(\varphi,\varphi)\le B^{(t)}(\varphi,\varphi)\le
c_2B^{(0)}(\varphi,\varphi)$ for every $t\in\BR$ and $\varphi\in
V$.
\end{enumerate}
By assumption, $(B^{(0)},V)$ is closed, i.e. $V$ is a real Hilbert
space with respect to $\tilde B^{(0)}_1(\cdot,\cdot)$, where
$\tilde B^{(0)}(\varphi,\psi)
=\frac12(B^{(0)}(\varphi,\psi)+B^{(0)}(\psi,\varphi))$ and
$\tilde\BB^{(0)}_1(\varphi,\psi)
=\tilde\BB^{(0)}(\varphi,\psi)+(\varphi,\psi)_H$ for
$\varphi,\psi\in V$. By  $V'$ we denote the dual space of $V$ and
we set
\[
\VV=L^2(\BR;V),\quad \VV'=L^2(\BR;V'),\quad
\WW=\{u\in\VV:\frac{\partial u}{\partial t}\in \VV'\}.
\]

We will consider two time dependent Dirichlet forms $(\EE,D(\EE))$
and $(\EE^{0,T},D(\EE^{0,T}))$ associated with the families
$\{B^{(t)},t\in\BR\}$ and $\{B^{(t)},t\in[0,T]\}$, respectively.
The first one we define by putting
$D(\EE)=(\WW\otimes\VV)\cup(\VV\otimes\WW)$ and
\[
\mathcal{E}(u,v)=\left\{
\begin{array}{l}
\langle\frac{\partial u}{\partial t},v\rangle+
\int_\BR B^{(t)}(u(t),v(t))\,dt,\quad u\in\WW,\,v\in\VV,\smallskip \\
-\langle u,\frac{\partial v}{\partial t}\rangle+ \int_\BR
B^{(t)}(u(t),v(t))\,dt,\quad u\in\VV,\,v\in\WW,
\end{array}
\right.
\]
where  $\langle\cdot,\cdot\rangle$ denotes the duality pairing
between $\VV$ and $\VV'$. It is known (see \cite[Example
I.4.9]{Stannat}) that $(\EE,D(\EE))$ is a generalized  Dirichlet
form on $L^2(E^1;m^1)$.

By \cite[Theorem 4.2]{O1} (see also \cite[Theorem 6.3.1]{O3})
there exists a Hunt process
$\mathbf{M}=(\Omega',(\mathbf{X}_t)_{t\ge 0},(P_z)_{z\in
E^1\cup\Delta}$, $(\FF_t^\BBX)_{t\ge 0})$  with state space $E^1$,
life time $\zeta$ and cemetery state $\Delta$ properly associated
with $(\EE,D(\EE))$ in the resolvent sense. Moreover,
\begin{equation} \label{eq3.17}
\BBX_t=(\iota(t),X_{\iota(t)}),\quad t\ge0,
\end{equation}
where $\iota$ is the uniform motion to the right, i.e.
$\iota(t)=\iota(0)+t$, $\iota(0)=s$, $P_{z}$-a.s. for $z=(s,x)$.
One can also check that  $(X,(P_{s,x})_{x\in E})$ is a time
inhomogeneous Markov process associated with the family
$\{(B^{(t)},V),t\ge0\}$, i.e. for every $s\in\BR$,
$(X_{s+\cdot}\,,(P_{s,x})_{x\in E})$ is a Hunt  process
associated with the form $(B^{(s)},V)$.

Let
\[
\WW_T=\{u\in L^2(0,T;V):\frac{\partial u}{\partial t}\in
L^2(0,T;V'),\, u(T)=0\},
\]
\[
\WW_0=\{u\in L^2(0,T;V):\frac{\partial u}{\partial t}\in
L^2(0,T;V'),\, u(0)=0\}.
\]
We define the second form  by setting $D(\EE^{0,T})=(\WW_T\otimes
L^2(0,T;V))\cup (\WW_0\otimes L^2(0,T;V))$ and
\[
\mathcal{E}^{0,T}(u,v)=\left\{
\begin{array}{l}\langle\frac{\partial u}{\partial t},v\rangle+
\BB^{0,T}(u,v),\quad u\in\WW_0,\,v\in L^2(0,T;V),\smallskip \\
-\langle u,\frac{\partial v}{\partial t}\rangle+
\BB^{0,T}(u,v),\quad v\in\WW_T,\,u\in L^2(0,T;V),
\end{array}
\right.
\]
where now $\langle\cdot,\cdot\rangle$ denotes the duality pairing
between $L^2(0,T;V')$ and $L^2(0,T;V)$, and
\begin{equation}
\label{eq3.16} \BB^{0,T}(u,v)=\int_0^T
B^{(t)}(u(t),v(t))\,dt,\quad u,v\in L^2(0,T;V).
\end{equation}
Note that by \cite[Example I.4.9]{Stannat},
$(\EE^{0,T},D(\EE^{0,T}))$ is a generalized Dirichlet form on
$L^2(E_{0,T};m_T)$.

We set
\[
R_\alpha f(z)=E_z\int_0^\infty e^{-\alpha t}f(\BBX_t)\,dt, \quad
z\in E^1,\quad f\in \BB(E^1),\quad\alpha\ge 0,
\]
\[
R^{0,T}_\alpha f(z)=E_z\int_0^{T-\iota(0)}e^{-\alpha
t}f(\BBX_t)\,dt, \quad z\in E_{0,T},\quad f\in
\BB(E_{0,T}),\quad\alpha\ge 0
\]
whenever the integrals exist. Let $\hat{\mathbf{M}}=(\hat\BBX,\hat
P_z)$ be the dual process of $\mathbf{M}$ (see \cite[Section
3.3]{O3}). We set (whenever the integrals exist)
\[
\hat{R}_\alpha f(z)=\hat{E}_z\int_0^\infty e^{-\alpha t} f(\hat
\BBX_t)\,dt, \quad z\in E^1,\quad f\in\BB(E^1),\quad\alpha\ge 0,
\]
\[
\hat{R}^{0,T}_\alpha f(z) =\hat{E}_z\int_0^{\iota(0)} e^{-\alpha
t} f(\hat \BBX_t)\,dt, \quad z\in E_{0,T},\quad f\in
\BB(E_{0,T}),\quad\alpha\ge 0.
\]
To shorten notation, we write $R=R_0$, $\hat{R}=\hat{R}_0$,
$R^{0,T}=R_0^{0,T}$, $\hat{R}^{0,T}=\hat{R}_0^{0,T}$. It is well
known that $G_\alpha=R_\alpha$, $\hat{G}_\alpha=\hat{R}_\alpha$ on
$L^2(E^1;m^1)$, $\alpha>0$, where $\{G_\alpha,\alpha>0\}$ is the
resolvent and  $\{\hat G_\alpha,\alpha>0\}$ is the dual resolvent
associated with the form  $(\EE,D(\EE))$. Similarly,
$G^{0,T}_\alpha =R_\alpha^{0,T}$,
$\hat{G}^{0,T}_\alpha=\hat{R}_\alpha^{0,T}$ on $L^2(E_{0,T};m_T)$,
$\alpha\ge 0$, where $\{G^{0,T}_\alpha,\alpha\ge0\}$ is the
resolvent and $\{\hat{G}^{0,T}_\alpha,\alpha\ge0\}$ is the dual
resolvent associated with $(\EE^{0,T},D(\EE^{0,T}))$.

By  $A_t$ we  denote the operator associated with the form
$(B^{(t)},V)$, i.e.
\begin{equation}
\label{eq3.9} (-A_tu,v)=B^{(t)}(u,v), \quad u\in D(A_t),v\in V,
\end{equation}
where $D(A_t)=\{u\in V:v\mapsto B^{(t)}(u,v)$  is continuous with
respect to $(\cdot,\cdot)^{1/2}$ on $V$\} (see \cite[Proposition
I.2.16]{MR}).

Let cap be the capacity considered in \cite{O2} (see also
\cite[Section 6.2]{O3}). We say that a set $B$ is
$\EE$-exceptional if cap$(B)=0$. We say that some property is
satisfied quasi-everywhere (q.e. for short) if the set of those
$z\in E^1$ for  which it does not hold is $\EE$-exceptional. Note
that a nearly Borel set $B$ is $\EE$-exceptional if and only if it
is $\BBM$-exceptional, i.e. $P_{m_1}(\sigma_B<\infty)=0$, where
$\sigma_B=\inf\{t>0:\BBX_t\in B\}$ (see, e.g., \cite[p. 298]{O2}).

We say that $f:E_{0,T}\rightarrow\BR$ is quasi-integrable ($f \in
qL^1$ in notation) if $f$ is Borel measurable and
$P_z(\int^{T-\iota(0)}_0|f(\BBX_t)|\,dt<\infty)=1$ for q.e. $z\in
E_{0,T}$. By $q\mathbb{L}^1$ we denote the set of measurable
functions $f:\Omega\times E_{0,T}\rightarrow \BR$ such that $P(\{
\omega\in\Omega; \, f(\omega,\cdot)\in qL^1\})=1$.

\subsection{Existence and uniqueness of solutions}

Let $A_t$ be the operator defined by (\ref{eq3.9}). Suppose we are
given measurable functions $\varphi: E\rightarrow\BR$,
$f:\Omega\times E_T\times\BR\rightarrow\BR$, $g_k:\Omega\times
E_T\times\BR\rightarrow\BR$, $k\in\BN$, and let
$g=(g_1,g_2,\dots)$. We are going to show that there exists a
unique solution of the SPDE
\begin{equation}
\label{eq4.1}
du(t)=-(A_tu+f(t,x,u))\,dt-g(t,x,u)\,d^{\dagger}\beta_t, \quad
u(T)=\varphi,
\end{equation}
under hypotheses (H1)--(H5) given below:
\begin{enumerate}
\item[(H1)] $\|\varphi\|_{L^2(E;m)}+E\|f(\cdot,0)\|^2_{L^2(E_{0,T};m_T)}
+E\sum_{k=1}^\infty\|g_k(\cdot,0)\|^2_{L^2(E_{0,T};m_T)}<\infty$.
\item[(H2)] For every $y\in\BR$ the mapping $E_{0,T}\ni z\mapsto f(z,y)$
belongs to $q\BL^1$.
\item[(H3)]There exist $l,L>0$ and measurable functions
$L_k:E_{0,T}\rightarrow[0,\infty)$ such that $\sup_{z\in
E_{0,T}}\sum_{k=1}^{\infty}L^2_k(z)\le l$ and for every $z\in
E_{0,T}$ and $y,y'\in\BR$,
\begin{enumerate}
\item[(a)]
$(f(z,y)-f(z,y'))(y-y')\le L|y-y'|^2$,

\item[(b)]
$|g_k(z,y)-g_k(z,y')| \le L_k(z)|y-y'|$.
\end{enumerate}
\item[(H4)]For every $\omega\in\Omega$ and $z\in E_{0,T}$
the mapping $\BR\ni y\mapsto f(z,y)$ is continuous.

\item[(H5)] For every $y\in\BR$ the mappings
$f(\cdot,y),g_k(\cdot,y):\Omega\times E_T\rightarrow\BR$,
$k\in\BN$, are $(\FF^{\beta}_{t,T})$-progressively measurable,
i.e. for every $t\in[0,T]$ the mappings $\Omega\times[T-t,T]\times
E\ni(\omega,s,x) \mapsto f(\omega,s,x,y)$ and
$\Omega\times[T-t,T]\times E\ni(\omega,s,x) \mapsto
g_k(\omega,s,x,y)$ are $\FF^{\beta}_{t,T}\otimes\BB([T-t,T])
\otimes\BB(E)$-measurable.
\end{enumerate}

In what follows
\begin{equation}
\label{eq3.5} \beta_t^{\iota}=\beta_{t+\iota(0)},\quad
T_{\iota}=T-\iota(0),\quad
\FF_t=\FF_{t,T_\iota}^{\beta^\iota}\vee\FF_t^{\BBX}, \quad
t\in[0,T],
\end{equation}
where $\FF_{t,T_\iota}^{\beta^\iota}=\sigma(\beta^{\iota}_{r\wedge
T_{\iota}}-\beta^{\iota}_t,r\in[t,T])$ (see our convention
(\ref{eq2.02})).

We will say that a random function $u:\Omega\times
E_{0,T}\rightarrow\BR$ is adapted if $u(\BBX)\in M$, where  $M$ is
defined as in Section \ref{sec2} but with respect to $(\FF_t)$
defined by (\ref{eq3.5}).

\begin{remark}
\label{rem3.1} (i) Assume (H5). Then for every $y\in\BR$ there
exists a $P\otimes m_T$-version $\tilde f(\cdot,y)$ of
$f(\cdot,y)$ and a $P\otimes m_T$-version $\tilde g_k(\cdot,y)$ of
$g_k(\cdot,y)$ such that the processes $\tilde f(\BBX,y), \tilde
g_k(\BBX,y)$ are of class $M$ under $\BP_z$ for q.e. $z\in
E_{0,T}$. To see this, we let $h$ stand for $f(\cdot,y)$ or for
$g_k(\cdot,y)$. It is known that any
$(\FF^{\beta}_{t,T})$-progressively measurable square-integrable
$h:\Omega\times E_T\rightarrow\BR$ may be approximated in the
space $L^2(\Omega\times E_T;P\otimes m_T)$ by linear combinations
of processes of the form
\begin{equation}
\label{eq3.6} S_j(\omega,t,x)=\mathbf{1}_{\Lambda_j}(\omega)
\mathbf{1}_{(T-r_{j+1},T-r_j]}(t) b_j(x),
\end{equation}
where $0\le r_j\le r_{j+1}\le T$, $\Lambda_j\in
\FF^\beta_{T-r_j,T}$  and $\{b_j,\, j\ge 0\}$ is some orthonormal
basis of $H$. It is an elementary check that the process
$\{S_j(\mathbf{X}_t),t\ge 0\}$ is $\{\FF^{\beta^\iota}_{t,
T_\iota}\}$-adapted. Therefore linear combinations of processes of
the form  (\ref{eq3.6}) are of class $M$.  Hence there is a
sequence $\{h_n\}$ such that $h_n\rightarrow h$ in
$L^2(\Omega\times E_{0,T};P\otimes m_T)$ and the processes
$h_n(\BBX)$ are of class $M$. Let $\tilde
h=\limsup_{n\rightarrow\infty}h_n$. Then $\tilde h(\BBX)$ is of
class $M$ and  $\tilde h$ is a $P\otimes m_T$-version of $h$. Let
$\hat S_{00}$ denote the set of all  finite zero order integral
measures $\nu$ on $E_{0,T}$ such that $\nu(E_{0,T})<\infty$ and
$\|\hat{R}^{0,T}\nu\|_{\infty}<\infty$. For every $\nu\in\hat
S_{00}$ we have
\[
\mathbb{E}_{\nu}\int_0^{T_\iota} |(\tilde
h(\mathbf{X}_t)-h(\BBX_t)|^2\,dt \le E\|\tilde
h-h\|^2_{L^2(E_{0,T};m_T)}\|\hat R^{0,T}\nu\|_{\infty}=0.
\]
From this and \cite[Theorem 2.5]{Trutnau} (or remark at the end of
Section 4 in \cite{O2}) it follows that $\BP_z(\tilde
h(\BBX_t)=h(\BBX_t)$ for a.e. $t\in[0,T])=1$ for q.e. $z\in
E_{0,T}$, which shows the desired result in case $h$ is
square-integrable. The general case is handled by approximating
$h$ pointwise by square-integrable functions.
\smallskip\\
(ii) From the fact that Carath\'eodory functions are jointly
measurable it follows that if $f,g_k$ satisfy (H3b), (H4), (H5)
and $u:\Omega\times E_{0,T}\rightarrow\BR$ is adapted then the
processes $t\mapsto\tilde f(\BBX_t,u(\BBX_t))$, $t\mapsto \tilde
g_k(\BBX_t,u(\BBX_t))$ are of class $M$.
\end{remark}

We first give the definition of a solution of (\ref{eq4.1}) and
related Markov-type BDSDE with coefficients $f,g$. In what follows
we always assume that the coefficients $f,g_k,k\ge1$, satisfy
(H5), and we always assume that we are taking their versions
having the properties listed in Remark \ref{rem3.1}.

\begin{definition}
We say that an adapted function $u:\Omega\times
E_{0,T}\rightarrow\BR$ is a solution of SPDE (\ref{eq4.1}) if
\begin{enumerate}
\item[(a)] $\BP_z(\int_0^{T_{\iota}}|f(\BBX_t,u(\BBX_t))|\,dt<\infty)=1$,
$\BE_z\sup_{0\le t\le T_\iota}
|\int_0^tf(\BBX_r,u(\BBX_r))\,dr|^2<\infty$ and
$\BE_z\int_0^{T_\iota}\|g(\BBX_t,u(\BBX_t))\|^2\,dt<\infty$ for
q.e. $z\in E_{0,T}$, where $\BP_z=P\otimes P_z$ and $\BE_z$
denotes the expectation with respect to $\BP_z$,

\item [(b)]for $P$-a.e. $\omega\in\Omega$,
$u(\omega;\cdot):E_{0,T}\rightarrow\BR$ is quasi-continuous and
for q.e. $z\in E_{0,T}$\,,
\[
u(z)=E_z\Big(\varphi(\BBX_{T_\iota})
+\int_0^{T_\iota}f(\BBX_t,u(\BBX_t))\,dt
+\int_0^{T_\iota}g(\BBX_t,u(\BBX_t))\,d^\dagger\beta_t^{\iota}\Big).
\]
\end{enumerate}
\end{definition}

\begin{definition}
A pair $(Y^z,M^z)\in\SM^2\times\MM^2$  is a solution of the BDSDE
\begin{align}
\label{eq3.4}
Y^z_t&=\varphi(\BBX_{T_{\iota}})+\int^{T_{\iota}}_t
f(\BBX_r,Y^z_r)\,dr\\
&\quad+\int^{T_{\iota}}_tg(\BBX_r,Y^z_r)\,d^{\dagger}\beta^{\iota}_r
-\int^{T_{\iota}}_tdM^z_r,\quad t\in[0,T_{\iota}] \nonumber
\end{align}
(BDSDE${}_z(\varphi,f,g)$ for short) if
\begin{enumerate}
\item[(a)]$\int_0^{T_{\iota}}|f(\BBX_t,Y^z_t)|\,dt<\infty$,
$\int_0^{T_{\iota}}\|g(\BBX_t,Y^z_t)\|^2\,dt<\infty$, $\BP_z$-a.s.
\item[(b)] Equation (\ref{eq3.4}) is satisfied $\BP_z$-a.s.
\end{enumerate}
\end{definition}
\medskip

We first prove that under (H1)--(H5) for q.e. $z\in E_{0,T}$ there
exists a unique solution of (\ref{eq3.4}), and moreover, one can
find a version of the solution which is independent of $z$. In
fact, the solution exists for every $z\in E_{0,T}\setminus N$,
where
\begin{equation}
\label{eq3.07} N=N_1\cup N_2,
\end{equation}
with
\[
N_1=\{z\in E_{0,T}:\BE_z \Big(|\varphi(\BBX_{T_\iota})|^2
+\int_0^{T_\iota}(|f(\BBX_t,0)|^2 +\|g(\BBX_t,0)\|^2)\,dt\Big)
=\infty\},
\]
\[
N_2=\{z\in E_{0,T}:\exists_{y\in\BR}\,\,\BP_z
\Big(\int_0^{T_\iota}|f(\BBX_r,y)|\,dr=\infty\Big)>0\}.
\]
\begin{lemma}
\label{stw2.1} Let  \mbox{\rm(H1)--(H5)} hold and let $N$ be
defined by \mbox{\rm(\ref{eq3.07})}. Then \mbox{\rm cap$(N)=0$}
and for every $z\in E_{0,T}\setminus N$ the data
\[
(\xi,\bar f(t,y),\bar g(t,y))= (\varphi(\BBX_{T_\iota}),
f(\BBX_t,y),g(\BBX_t,y)),\quad t\in[0,T],y\in\BR
\]
satisfy  assumptions $\mbox{\rm{(A1)--(A4)}}$ under the measure
$\BP_z$.
\end{lemma}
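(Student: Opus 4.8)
The goal is to verify that the Markovian data $(\xi,\bar f,\bar g)=(\varphi(\BBX_{T_\iota}),f(\BBX_t,y),g(\BBX_t,y))$ satisfy the abstract hypotheses (A1)--(A4) under $\BP_z$ for all $z$ outside the exceptional set $N=N_1\cup N_2$, and that $\mathrm{cap}(N)=0$. Let me plan how to prove each piece.

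First, the capacity claim. The plan is to show that $N_1$ and $N_2$ are each $\EE$-exceptional separately. For $N_1$, I would recognize that $N_1$ is exactly the set where the potential $E_z(\cdots)$ of the nonnegative functional built from $|\varphi|^2$, $|f(\cdot,0)|^2$ and $\|g(\cdot,0)\|^2$ is infinite. By (H1) the function $h_0:=|\varphi|^2+|f(\cdot,0)|^2+\|g(\cdot,0)\|^2$ is in $L^1(E_{0,T};m_T)$ after integrating out $\omega$ (using $E\|\cdot\|^2<\infty$), so the associated additive functional has finite energy and its potential $R^{0,T}h_0$ is finite q.e.; the standard potential-theoretic fact that an $m_T$-integrable function has a q.e.-finite potential (equivalently, that $\{R^{0,T}h_0=\infty\}$ is exceptional) gives $\mathrm{cap}(N_1)=0$. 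Here I would also use that $\varphi(\BBX_{T_\iota})$ contributes a finite terminal term q.e. because $\varphi\in L^2$ and the process hits $T_\iota$ at its lifetime. For $N_2$, the set is where some $y$ makes $\int_0^{T_\iota}|f(\BBX_r,y)|\,dr=\infty$ with positive probability; by (H2) each fixed $y$ gives an exceptional null set, and monotonicity in $y$ coming from (H3a) lets me reduce the uncountable union over $y\in\BR$ to a countable union over rationals, which is again exceptional.

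Next, the verification of (A1)--(A4) for a fixed $z\notin N$. For (A1), I would unwind the definition: $\BE_z|\xi|^2=\BE_z|\varphi(\BBX_{T_\iota})|^2$ and $\BE_z\int_0^{T_\iota}(|\bar f(t,0)|^2+\|\bar g(t,0)\|^2)\,dt = \BE_z\int_0^{T_\iota}(|f(\BBX_t,0)|^2+\|g(\BBX_t,0)\|^2)\,dt$ are exactly the quantities appearing in the definition of $N_1$, hence finite precisely because $z\notin N_1$. For (A2), I note that $\int_0^{T_\iota}|\bar f(t,y)|\,dt=\int_0^{T_\iota}|f(\BBX_t,y)|\,dt<\infty$ $\BP_z$-a.s. for each $y$ is exactly the complement of the condition defining $N_2$, so $z\notin N_2$ gives (A2). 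For (A3), the monotonicity (A3a) and the Lipschitz bound (A3b) for the abstract data follow by simply substituting $\BBX_t$ into the pointwise inequalities (H3a), (H3b): these hold for every $z\in E_{0,T}$ and $y,y'$, so they hold along the trajectory, and the bound $\sup_t\sum_k L_k^2(\BBX_t)\le \sup_{z}\sum_k L_k^2(z)\le l$ transfers the uniform $l^2$-bound. For (A4), continuity of $y\mapsto f(\BBX_t,y)$ is immediate from (H4), which asserts continuity for every $z$, hence along $\BBX_t(\omega')$.

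The main obstacle, and the place where real work is needed, is the capacity estimate $\mathrm{cap}(N)=0$, specifically controlling $N_2$. The subtlety is that the exceptional set in (H2) is declared for each fixed $y$, so naively $N_2$ is an uncountable union of exceptional sets, which need not be exceptional. The key idea I would exploit is the monotone structure supplied by (A3a)/(H3a): after subtracting the linear term $Ly$, the map $y\mapsto f(z,y)-Ly$ is nonincreasing, so $\int_0^{T_\iota}|f(\BBX_r,y)|\,dr$ is dominated, uniformly for $y$ in any bounded interval, by its values at the two rational endpoints plus a linear-in-$y$ contribution that is integrable by (A1)-type control. Thus finiteness for all rational $y$ forces finiteness for all real $y$, reducing $N_2$ to a countable union of exceptional sets. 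The other delicate point is the passage in Remark \ref{rem3.1} between $P\otimes m_T$-versions and statements holding $\BP_z$-a.s. for q.e. $z$; I would invoke that remark to ensure the chosen versions of $f(\cdot,y),g_k(\cdot,y)$ give processes of class $M$ and that pathwise integrals are well defined, so that the exceptional sets one discards are genuinely $\EE$-exceptional rather than merely $m_T$-null.
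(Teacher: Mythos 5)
Your proposal is correct, and on the heart of the matter it follows the same route as the paper. For $\mbox{\rm cap}(N_1)=0$ the two arguments coincide in substance: both rest on the duality computation $\int_{E_{0,T}}\bigl(\BE_z\int_0^{T_\iota}|f(\BBX_t,0)|^2\,dt\bigr)\psi(z)\,m_T(dz)=E(f^2(\cdot,0),\hat{R}^{0,T}\psi)_{L^2(E_{0,T};m_T)}<\infty$ for a strictly positive $\psi$ with $\|\hat{R}^{0,T}\psi\|_\infty<\infty$, which gives finiteness $m_T$-a.e.; the only difference is that you then cite the standard fact that a potential of an integrable function (an excessive function finite $m_T$-a.e.) is finite q.e., whereas the paper proves exactly this fact by hand: if $\BBX$ hit the $m_T$-null set $N_0$ where the potential is infinite, then $\BE_z\bigl(\int_{\sigma_{N_0}}^{T_\iota}|f(\BBX_t,0)|^2\,dt\,|\,\FF^{\BBX}_{\sigma_{N_0}}\bigr)$ would be infinite with positive probability, contradicting finiteness of the unconditional expectation, so $N_0$ is $\BBM$-exceptional. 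Where you genuinely depart from (and in fact improve on) the paper is $N_2$: the paper disposes of it with the single sentence ``by the definition of the space $q\BL^1$, cap$(N_2)=0$'', which for each \emph{fixed} $y$ does yield an exceptional set (after the Fubini-type exchange of $\omega$-null and capacity-null sets that you correctly identify with Lemma \ref{lm.201}), but says nothing about why the \emph{uncountable} union over $y\in\BR$ remains exceptional. Your reduction via (H3a) --- $y\mapsto f(z,y)-Ly$ is nonincreasing, so $|f(z,y)|\le|f(z,q_1)|+|f(z,q_2)|+L(|q_1|+|q_2|+|y|)$ for $y\in[q_1,q_2]$ with rational $q_1,q_2$, whence finiteness of $\int_0^{T_\iota}|f(\BBX_r,q)|\,dr$ for all rational $q$ forces it for all real $y$ --- closes this gap and is the same device the abstract theory uses implicitly (rational infima in Step 3 of the proof of Theorem \ref{tw1.1}). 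The verification of (A1)--(A4) at a fixed $z\notin N$ is bookkeeping in both treatments; the one place to tighten your write-up is the terminal term, which should be handled like the running integrals (the function $z\mapsto\BE_z|\varphi(\BBX_{T_\iota})|^2$ is excessive and finite $m_T$-a.e. by sub-Markovianity of the dual resolvent, so the same a.e.-to-q.e. upgrade applies) rather than by the informal remark that the process ``hits $T_\iota$ at its lifetime''.
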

\begin{proof}
Let $N_0=\{z\in E_{0,T};\BE_z\int_0^{T_{\iota}}
|f(\BBX_t,0)|^2\,dt=\infty\}$. We may assume that $N_0$ is
compact. Let $\sigma_{N_0}$ be the first hitting time of $N_0$.
Then
\begin{align*}
\BP_z(\sigma_{N_0}<\zeta_\iota)&=\BP_z(\exists_{t<\zeta_\iota}\BBX_t\in
N_0) \le \BP_z\Big(\BE_{\BBX_{\sigma_{N_0}}}
\int_0^{T_\iota}|f(\BBX_t,0)|^2\,dt=\infty\Big)\\
&=\BP_z\Big(\BE_z\big(\int_{\sigma_{N_0}}^{T_\iota}|f(\BBX_t,0)|^2\,dt
|\FF_{\sigma_{N_0}}^\BBX\big)=\infty\Big)\\
&\le\BP_z\Big(\BE_z\big(\int_0^{T_\iota}|f(\BBX_t,0)|^2\,dt
|\FF_{\sigma_{N_0}}^\BBX\big)=\infty\Big)=0
\end{align*}
for $m_T$-a.e. $z\in E_{0,T}$, because for any strictly positive
Borel function $\psi$ on $E_{0,T}$ such that
$\|\hat{R}^{0,T}\psi\|_\infty<\infty$ we have
\begin{align*}
&\int_{E_{0,T}}\Big(\BE_z\int_0^{T_\iota}|f(\BBX_t,0)|^2\,dt\Big)
\psi(z)m_T\,(dz)
=E(R^{0,T}f^2(\cdot,0),\psi)_{L^2(E_{0,T};m_T)}\\
&\quad=E(f^2(\cdot,0),\hat{R}^{0,T}\psi)_{L^2(E_{0,T};m_T)} \le
E\|f(\cdot,0)\|^2_{L^2(E_{0,T};m_T)}\cdot\|R^{0,T}\psi\|_\infty<\infty.
\end{align*}
Hence cap$(N_0)=0$. In much the same way one can show that
cap$(N_1)=0$. Moreover, by the definition of the space $q\BL^1$,
cap$(N_2)=0$. Thus cap$(N)=0$ and (A1)--(A4) are satisfied under
the measure $\BP_z$ for $z$ outside the set $N=N_1\cup N_2$.
\end{proof}
\medskip

In what follows we use the notation
\[
\mathcal{A}_t=\FF^{\beta^\iota}_{T_\iota}\vee\FF^\BBX_t.
\]

\begin{theorem}
\label{stw2.2} Assume that $\varphi,f,g$ satisfy
\mbox{\rm(H1)--(H5)} and define $N$ by \mbox{\rm(\ref{eq3.07})}.
Then for every $z\in E_{0,T}\setminus N$ there exists a solution
$(Y^z,M^z)$ of $\mbox{\rm{BDSDE}}_z(\varphi,f,g)$. Moreover, there
exists a pair of c\`adl\`ag $(\FF_t)$-adapted processes $(Y,M)$
such that for every $z\in E_{0,T}\setminus N$,
\begin{equation}
\label{eq2.1} (Y_t^z,M_t^z)=(Y_t,M_t),\quad
t\in[0,T_\iota],\quad\BP_z\mbox{\rm{-a.s.}}
\end{equation}
\end{theorem}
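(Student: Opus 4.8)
The plan is to split the statement into a pointwise part and a regularization part. First I would establish, for each fixed admissible starting point, existence and uniqueness of a solution under the measure $\BP_z$; then I would construct a single $(\FF_t)$-adapted pair $(Y,M)$, built as a functional of the paths of $(\BBX,\beta)$ alone, that realizes $(Y^z,M^z)$ simultaneously for all $z\in E_{0,T}\setminus N$. The pointwise part is immediate from the earlier results: by Lemma \ref{stw2.1} we have $\mathrm{cap}(N)=0$ and, for every $z\in E_{0,T}\setminus N$, the data $(\varphi(\BBX_{T_\iota}),f(\BBX_t,y),g(\BBX_t,y))$ satisfy (A1)--(A4) under $\BP_z$. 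Hence Theorem \ref{tw1.1}, applied on $(\Omega\times\Omega',\FF\otimes\GG,\BP_z)$ with the family $(\FF_t)$ from \eqref{eq3.5}, yields a solution $(Y^z,M^z)\in\SM^2\times\MM^2$ of BDSDE${}_z(\varphi,f,g)$, which is unique by Corollary \ref{wn1.1}. All the effort therefore goes into the common version.

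The strategy for the common version is to follow the approximation scheme in the proof of Theorem \ref{tw1.1} and show that at each stage the solution can be written as a functional of $(\BBX,\beta)$ independent of $z$. I would start from the linear case (Step~1 of Theorem \ref{tw1.1}), where
\[
Y^z_t=\BE_z(\Lambda_t\mid\MA_t),\qquad
M^z_t=\BE_z(\Lambda_0\mid\MA_t)-Y^z_0,
\]
with $\Lambda_t=\varphi(\BBX_{T_\iota})+\int_t^{T_\iota}f(\BBX_r)\,dr+\int_t^{T_\iota}g(\BBX_r)\,d^\dagger\beta^\iota_r$ and $\MA_t=\FF^{\beta^\iota}_{T_\iota}\vee\FF^\BBX_t$. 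Since $\MA_t$ carries the whole of $\beta^\iota$ together with the past of $\BBX$, conditioning leaves only the future of $\BBX$ to be averaged out; by the product structure $\BP_z=P\otimes P_z$ and the Markov property of $\BBM$ this average is governed by the kernel $P_{\BBX_t}$, exactly as in the computation leading to \eqref{eq2.03} (together with \cite[Proposition 5.6]{K}). The resulting expressions are functionals of $(\BBX,\beta)$ in which $z$ enters only through $\BBX_t$, so they define one $(\FF_t)$-adapted c\`adl\`ag pair coinciding with $(Y^z,M^z)$ under every $\BP_z$, $z\notin N$.

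Next I would propagate this through the nonlinearity. For Lipschitz $f$ the Picard scheme \eqref{eq1.2} builds $(Y^{n+1},M^{n+1})$ from a linear equation whose data are functions of $(\BBX,Y^n)$; if $Y^n$ has a $z$-independent version then so does this data and, by the linear case, so does $(Y^{n+1},M^{n+1})$. The contraction estimates of Theorem \ref{tw1.1} are valid under each $\BP_z$, so the $\SM^2\times\MM^2$-limit is again a functional of $(\BBX,\beta)$, and by Corollary \ref{wn1.1} it must equal $(Y^z,M^z)$ off $N$. The monotone approximations $f_n$ of Steps~3 and~4 are treated the same way: a common version for each $(Y^n,M^n)$ gives the common version $Y=\sup_n Y^n$ (resp.\ $\inf_n Y^n$), while the corresponding martingale part is again given by a conditional-expectation formula and hence transfers through the Markov kernel; uniqueness identifies the limit with $(Y^z,M^z)$.

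The main obstacle is to carry out the whole argument off a single polar set. Each identity above holds $\BP_z$-a.s.\ only for $z$ outside some capacity-zero set, and these sets must be shown to collapse to $N$; this is precisely the kind of capacity estimate proved in Lemma \ref{stw2.1}, which I would reuse to control the exceptional sets arising at each approximation level and in the passage to the limit. A second delicate point is the measurability underlying the Markov-kernel identification for the terms containing the backward It\^o integral: because $(\FF_t)$ is not increasing, this step relies on the independence of $\FF^\beta_t$ used in \eqref{eq2.03} and, if convenient, on the time reversal of Remark \ref{rem2.2}, which turns the backward integrals into ordinary forward integrals against $\hat\beta$ and legitimizes the conditioning. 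Once $(Y,M)$ is exhibited, its c\`adl\`ag regularity and $(\FF_t)$-adaptedness are inherited from the properties established under each $\BP_z$.
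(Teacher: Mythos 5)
Your overall architecture coincides with the paper's: pointwise existence and uniqueness from Lemma \ref{stw2.1}, Theorem \ref{tw1.1} and Corollary \ref{wn1.1}, then a common version built first for the linear equation, then propagated through the Picard scheme for Lipschitz $f$ and through the monotone approximations, with uniqueness identifying the limits. The difference lies in how the common version is actually produced, and this is where your proposal has a genuine gap. In the linear case, the Markov property (together with the shift identities for the backward integral) gives, for each \emph{fixed} $t$, a version of $\BE_z(\Lambda_t|\MA_t)$ of the form $\Psi(\omega,\BBX_t(\omega'))$ with $\Psi$ independent of $z$ --- that is, a modification. But the theorem requires a single \emph{c\`adl\`ag} pair $(Y,M)$ that is indistinguishable from $(Y^z,M^z)$ under every $\BP_z$; a process defined $t$-by-$t$ through a Markov kernel composed with the path of $\BBX$ need not have regulated paths (a merely measurable $\Psi(\omega,\cdot)$ composed with a c\`adl\`ag process is not c\`adl\`ag in general), and ``inheriting'' regularity from the $z$-dependent versions is circular, since modifications do not preserve path properties. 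The paper fills exactly this hole by invoking \cite[Lemmas A.3.3, A.3.5]{FOT}, which produce a common c\`adl\`ag adapted version of the conditional-expectation martingales and of $Y_0^z$, valid simultaneously for all admissible starting points; this regularization is the real content of Step 1, and your proposal asserts it rather than proves it.

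The second, related gap is in the limit passages. Convergence of $(Y^{z,n},M^{z,n})$ to $(Y^z,M^z)$ in $\SM^2\times\MM^2$ under each $\BP_z$ only yields an a.s. convergent subsequence \emph{depending on} $z$, so ``the $\SM^2\times\MM^2$-limit is again a functional of $(\BBX,\beta)$'' does not follow by itself; one must select the limit in a $z$-independent way. The paper again uses \cite[Lemma A.3.3]{FOT} for this (a Borel--Cantelli argument with respect to a reference measure, in the spirit of the paper's Lemma \ref{lm.200}, would also do; in the Picard step the geometric contraction rate even makes the whole sequence converge $\BP_z$-a.s. for every $z\notin N$, so a pointwise $\limsup$ works there). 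Your proposed tool for this difficulty --- reusing Lemma \ref{stw2.1} to control the exceptional sets arising in the passage to the limit --- is not the right instrument: that lemma only shows $\mathrm{cap}(N)=0$ and that (A1)--(A4) hold off $N$; it says nothing about gluing the $\BP_z$-a.s. limits into one process. Your treatment of the monotone steps ($Y=\sup_n Y^n$ together with the conditional-expectation formula for the martingale part) is fine, but it again reduces to the linear-case regularization issue above.
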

\begin{proof}
The existence of a solution $(Y^z,M^z)$ follows from Theorem
\ref{tw1.1} and  Lemma \ref{stw2.1}. The proof of (\ref{eq2.1}) we
divide into three steps.
\smallskip\\
Step 1. Assume that $f,g$ do not depend on the last variable $y$.
Then
\begin{equation}
\label{eq3.8} M_t^z=\mathbb{E}_z\Big(\varphi(\BBX_{T_\iota})
+\int_0^{T_\iota}f(\BBX_r)\,dr
+\int_0^{T_\iota}g(\BBX_r)\,d^\dagger\beta_r^{\iota}
|\mathcal{A}_t\Big)-Y^z_0
\end{equation}
and
\[
Y_t^z=\mathbb{E}_z\Big(\varphi(\BBX_{T_\iota})
+\int_t^{T_\iota}f(\BBX_r)\,dr
+\int_t^{T_\iota}g(\BBX_r)\,d^\dagger \beta_r^{\iota}
|\mathcal{A}_t\Big).
\]
By \cite[Lemma A.3.5]{FOT} there exists a random variable $H_0$
such that $Y_0^z=H_0$, $\mathbb{P}_z$-a.s. for every $z\in
E_{0,T}\setminus N$, while by \cite[Lemma A.3.3, A.3.5]{FOT} there
exists an $(\mathcal{A}_t)$-adapted c\`adl\`ag process $M$ such
that $M_t=M_t^z$, $t\in[0,T_\iota]$ $\BP_z$-a.s. for $z\in
E_{0,T}\setminus N$.  Set
\[
Y_t=H_0-\int_0^tf(\mathbf{X}_r)\,dr
-\int_0^tg(\mathbf{X}_r)\,d^{\dagger}\beta_r+\int_0^t\,dM_r,\quad
t\in[0,T_\iota].
\]
Then $Y$ is an $(\FF_t)$-adapted c\`adl\`ag process  such that
$Y_t=Y_t^z$, $t\in[0,T_\iota]$, $\BP_z$-a.s. for every $z\in
E_{0,T}\setminus N$.
\smallskip\\
Step 2. We now consider general $f,g$ (possibly depending on $y$)
but we assume that $f$ is Lipschitz continuous with respect to $y$
uniformly in $t$. By Step 2 of the proof Theorem \ref{tw1.1},
\begin{equation}
\label{eq2.2} (Y^{z,n},M^{z,n})\rightarrow(Y^z,M^z)\mbox{ in
}S^2\otimes\MM^2,
\end{equation}
where $(Y^{z,0},M^{z,0})=(0,0)$ and
\begin{align*}
Y_t^{z,n+1}&=\varphi(\BBX_{T_\iota})
+\int_t^{T_\iota}f(\BBX_r,Y_r^{z,n})\,dr+
\int_t^{T_\iota}g(\BBX_r,Y_r^{z,n})\,d^{\dagger}\beta_r\\
&\quad-\int_t^{T_\iota}\,dM_r^{z,n+1},\quad t\in[0,T_\iota],\quad
\BP_z\mbox{-a.s.}
\end{align*}
for $z\in E_{0,T}\setminus N$. By Step 1, for every $n\ge 0$ there
exists a pair $(Y^n,M^n)$ of $(\FF_t)$-adapted
 c\`adl\`ag processes such that (\ref{eq2.1}) holds for
$(Y^{z},M^{z})$, $(Y,M)$ replaced by $(Y^{z,n},M^{z,n})$,
$(Y^n,M^n)$. Therefore applying \cite[Lemma A.3.3]{FOT} we show
the existence of a pair $(Y,M)$ satisfying (\ref{eq2.1}).
\smallskip\\
Step 3. The general case. From the proof of Theorem \ref{tw1.1} it
follows that (\ref{eq2.2}) holds for $(Y^{z,n},M^{z,n})$ being a
solution of BDSDE$_z(\varphi,f_n,g)$ with some  Lipschitz
continuous in $y$ function $f_n$. By Step 2, (\ref{eq2.1}) holds
for $(Y^{z,n},M^{z,n})$. Therefore applying  \cite[Lemma
A.3.3]{FOT} shows that  (\ref{eq2.1}) is satisfied.
\end{proof}
\medskip

Of course the pair $(Y,M)$ of Theorem \ref{stw2.2} is a solution
of $\mbox{\rm{BDSDE}}_z(\varphi,f,g)$ for q.e. $z\in E_{0,T}$. Our
next goal is to show that the function $u$ defined as
$u(z)=E_zY_0$ for $z\in E_{0,T}\setminus N$ is a solution of SPDE
(\ref{eq4.1}). We first prove this in Proposition \ref{stw3.41}
for linear equations and then in Theorem \ref{tw3.4} in the
general case. We begin with a simple but useful lemma.

\begin{lemma}
\label{lm3.333} Assume that $\Lambda\in\mathcal{A}_{T_\iota}$ and
let $N$ be a properly exceptional subset of $E_{0,T}$. If
$\BP_z(\Lambda)=1$ for $z\in E_{0,T}\setminus N$ then for every
$z\in E_{0,T}\setminus N$, $\BP_z(\theta_\tau^{-1}(\Lambda))=1$
for every stopping time $\tau$ with respect to $(\mathcal{A}_t)$
such that $0\le\tau\le T_\iota$.
\end{lemma}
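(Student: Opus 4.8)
The plan is to deduce the statement from the strong Markov property of the Hunt process $\BBM$, using the product structure $\BP_z=P\otimes P_z$ together with the independence of the noise $\beta$ from $\BBX$. The only structural input needed is that $N$ is properly exceptional, so that for every $z\in E_{0,T}\setminus N$ the trajectory avoids $N$: $\BBX_t\in E_{0,T}\setminus N$ for all $t<\zeta$, $\BP_z$-a.s.; in particular $\BBX_\tau\notin N$ $\BP_z$-a.s. First I would record a Fubini reduction of the hypothesis. Writing $\Lambda_\omega=\{\omega'\in\Omega':(\omega,\omega')\in\Lambda\}$, for each fixed $z\notin N$ we have $\BP_z(\Lambda)=\int_\Omega P_z(\Lambda_\omega)\,P(d\omega)=1$ with $P_z(\Lambda_\omega)\le 1$, whence $P_z(\Lambda_\omega)=1$ for $P$-a.e.\ $\omega\in\Omega$. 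Thus the (nearly Borel) function $w\mapsto P_w(\Lambda_\omega)$ equals $1$ off $N$ for $P$-a.e.\ $\omega$.

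The core step is the Markov-property identity
\[
\BE_z\big[\mathbf 1_{\theta_\tau^{-1}(\Lambda)}\,|\,\mathcal{A}_\tau\big]
=P_{\BBX_\tau}(\Lambda_\omega),\qquad \BP_z\text{-a.s.}
\]
To obtain it I would condition first on the whole Wiener trajectory, i.e.\ freeze $\omega\in\Omega$. Since $\mathcal{A}_t=\FF^{\beta^\iota}_{T_\iota}\vee\FF^\BBX_t$, for $P$-a.e.\ $\omega$ this turns $\tau(\omega,\cdot)$ into an $(\FF^\BBX_t)$-stopping time and turns $\theta_\tau^{-1}(\Lambda)$ into a functional of the Markov trajectory alone: the Wiener inputs the shifted event reads off, namely $\beta^\iota_{\cdot+\tau}=\beta_{\cdot+\iota(\tau)}$, are determined by the frozen $\omega$ and are consistent with the new initial time $\iota(\tau)$ carried by $\BBX_\tau$. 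The ordinary strong Markov property of the Hunt process $\BBM$, combined with the independence of $\BBX$ from $\beta$, then yields the displayed identity, with $\Lambda_\omega$ evaluated at the frozen $\omega$.

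Finally I would integrate. Because $\BBX_\tau\notin N$ $\BP_z$-a.s.\ and, for each $w\notin N$, $P_w(\Lambda_\omega)=1$ for $P$-a.e.\ $\omega$, one more Fubini argument on $\Omega\times\Omega'$ gives $P_{\BBX_\tau}(\Lambda_\omega)=1$ for $\BP_z$-a.e.\ $(\omega,\omega')$. Taking $\BE_z$ of the Markov identity then yields
\[
\BP_z(\theta_\tau^{-1}(\Lambda))=\BE_z\big[P_{\BBX_\tau}(\Lambda_\omega)\big]=1 .
\]
On the event $\{\tau\ge\zeta\}$ the trajectory sits at the cemetery $\Delta$, where the claim is trivial by convention, so this contributes nothing to the computation.

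The step I expect to be the main obstacle is the Markov-property identity. Its difficulty is caused entirely by the non-standard $\sigma$-field $\mathcal{A}_\tau$, which contains the \emph{entire} Wiener $\sigma$-field $\FF^{\beta^\iota}_{T_\iota}$ but only the Markov past up to $\tau$. One must therefore argue carefully, by freezing $\omega$ and invoking the independence of $\beta$ and $\BBX$, that conditioning on all of the noise together with the Markov history up to $\tau$ leaves precisely the future of $\BBX$, governed by $P_{\BBX_\tau}$. The remaining steps are routine Fubini manipulations combined with the defining avoidance property of a properly exceptional set.
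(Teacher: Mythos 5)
Your proposal is, in outline, the same argument as the paper's: the paper's entire proof is the single line $\BP_z(\theta_\tau^{-1}(\Lambda^c))=\BE_z\BP_{\BBX_\tau}(\Lambda^c)=0$, justified by the words ``strong Markov property and proper exceptionality of $N$'', and your three steps expand exactly that line. Note, though, that the step you single out as the main obstacle --- the frozen-$\omega$ strong Markov identity --- is actually the unproblematic part: for fixed $\omega$, $\tau(\omega,\cdot)$ is an $(\FF^{\BBX}_t)$-stopping time, the section $\Lambda_\omega$ is (up to completion) $\FF^{\BBX}_{T_\iota}$-measurable, and the Hunt process is strong Markov; this gives $\BP_z(\theta_\tau^{-1}(\Lambda^c))=\BE_z[h(\omega,\BBX_\tau)]$ with $h(\omega,w):=P_w((\Lambda^c)_\omega)$, the \emph{same} $\omega$ appearing in both arguments.

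The genuine gap is the step you dismiss as ``one more Fubini argument''. What the hypothesis gives is: for each fixed $w\notin N$, $h(\omega,w)=0$ for $P$-a.e.\ $\omega$, the null set depending on $w$ (your earlier sentence ``the function $w\mapsto P_w(\Lambda_\omega)$ equals $1$ off $N$ for $P$-a.e.\ $\omega$'' already swaps these quantifiers without justification). To deduce $\BE_z[h(\omega,\BBX_\tau)]=0$ from section-wise nullity, Fubini requires the joint law of $(\omega,\BBX_\tau)$ to be dominated by a product $P\otimes\nu$ with $\nu(N)=0$; no such domination is available, because $\FF^{\beta^\iota}_{T_\iota}\subset\mathcal{A}_0$ means an $(\mathcal{A}_t)$-stopping time may be a functional of the \emph{whole} Wiener path, so $\BBX_\tau=(\iota(0)+\tau,X_{\iota(0)+\tau})$ can be correlated with $\omega$ along an $\omega$-dependent ``diagonal'' that every product measure ignores. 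And the failure is real, not technical: take $\Lambda=\{\iota(0)\neq a\}$ with $a:=|\beta^{\iota}_{T_\iota}-\beta^{\iota}_0|$. Then $\BP_w(\Lambda)=1$ for every $w=(s,x)\in E_{0,T}$ (for $s<T$ the law of $a$ under $\BP_w$ is diffuse; for $s=T$ one has $a=0\neq T$), so the hypothesis holds with $N=\emptyset$; but $G(t):=|\beta^{\iota}_{T_\iota}-\beta^{\iota}_t|-\iota(0)-t$ is continuous with $G(T_\iota)=-T<0$, so $\tau:=\inf\{t\in[0,T_\iota]:G(t)=0\}\wedge T_\iota$ is an $\mathcal{A}_0$-measurable, hence $(\mathcal{A}_t)$-, stopping time, and on $\{G(0)>0\}$ (an event of positive $\BP_z$-probability when $z=(s,x)$, $s<T$) the intermediate value theorem forces $G(\tau)=0$, i.e.\ $\theta_\tau(\omega,\omega')\in\Lambda^c$. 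So for noise-anticipating $\tau$ the inference breaks down; your argument is sound exactly when $\omega$ and $\BBX_\tau$ decouple, e.g.\ for countably valued $\tau$, or for $\tau$ depending only on $\omega'$, where $\BBX_\tau$ is independent of $\omega$ under $\BP_z=P\otimes P_z$ and Fubini genuinely applies. In fairness, the paper's one-line proof compresses rather than resolves the same point (the identity it invokes treats the shifted noise as fresh, which is false when one conditions on $\mathcal{A}_\tau\supset\FF^{\beta^\iota}_{T_\iota}$), so you have faithfully reproduced the paper's argument together with its unfilled hole; but that hole is the entire content of the lemma, not a routine Fubini manipulation.
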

\begin{proof}
By the strong Markov property and proper exceptionality of $N$, for every
$z\in E_{0,T}\setminus N$ we have
\[
\BP_z(\theta_\tau^{-1}(\Lambda^c))=\BE_z\BP_{\BBX_\tau}(\Lambda^c)=0,
\]
which proves the lemma.
\end{proof}

\begin{remark}
It is clear that if $\tau$ is an $(\mathcal{A}_t)$-stopping time
then $\tau(\omega;\cdot)$ is an $(\FF_t^{\mathbf{X}})$-stopping
time for every $\omega\in \Omega$.
\end{remark}

In the sequel we  extend the shift operator $\theta$ to
$\Omega\times \Omega'$ by putting
\[
\theta_t(\omega,\omega')=(\omega,\theta_t(\omega')).
\]
It is clear that $\theta_t^{-1}(\mathcal{A}_0)\subset\mathcal{A}_t$.

\begin{proposition}
\label{stw3.41} Assume that $\varphi,f,g$ satisfy $\mbox{\rm(H1),
(H2), (H5)}$ and $f,g$ do not depend on the last variable $y$.
Define $N$ by \mbox{\rm(\ref{eq3.07})} and set
\begin{equation}
\label{eq2.71} u(z)=E_z\Big(\varphi(\BBX_{T_\iota})
+\int_0^{T_\iota}f(\BBX_t)\,dt+\int_0^{T_\iota}g(\BBX_t)\,d^\dagger
\beta_t^\iota\Big)
\end{equation}
for $z\in E_{0,T}\setminus N$ and $u(z)=0$  otherwise. Then for
$P$-a.s. $\omega\in\Omega$ the function $u(\omega;\cdot)$ is
quasi-continuous and for every $z\in E_{0,T}\setminus N$,
\begin{equation}
\label{eq2.31} Y_t=u(\BBX_t),\quad
t\in[0,T_\iota],\quad\BP_z\mbox{-a.s.},
\end{equation}
where $Y$ is the process of Theorem \ref{stw2.2}.
\end{proposition}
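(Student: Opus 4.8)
The plan is to start from the explicit form of the solution obtained in Step~1 of the proof of Theorem~\ref{stw2.2}. Since $f,g$ do not depend on $y$, the pair $(Y,M)$ is given by $Y_t=\BE_z(\Lambda_t\,|\,\mathcal{A}_t)$, where
\[
\Lambda_t=\varphi(\BBX_{T_\iota})+\int_t^{T_\iota}f(\BBX_r)\,dr
+\int_t^{T_\iota}g(\BBX_r)\,d^{\dagger}\beta^\iota_r .
\]
Taking $t=0$ and noting that conditioning on $\mathcal{A}_0=\FF^{\beta^\iota}_{T_\iota}\vee\FF^\BBX_0$ only averages out the randomness of $\BBM$ gives $Y_0=E_z\Lambda_0=u(z)$, $\BP_z$-a.s., which is \eqref{eq2.31} at $t=0$. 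To deal with both assertions I would split $u=u_1+u_2+u_3$ into the terminal term $u_1(z)=E_z\varphi(\BBX_{T_\iota})$, the potential $u_2=R^{0,T}f$, and the backward-It\^o term $u_3(\omega,z)=E_z\int_0^{T_\iota}g(\omega,\BBX_t)\,d^{\dagger}\beta^\iota_t$, and treat quasi-continuity and the Markov identification separately.

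For quasi-continuity, $u_1$ and $u_2$ admit quasi-continuous versions by the standard potential theory for the generalized Dirichlet form $(\EE^{0,T},D(\EE^{0,T}))$: $u_2$ is a $0$-order potential $R^{0,T}f$ and $u_1$ is the value function of the terminal datum. The delicate term is $u_3$. Here I would approximate $g$ by elementary processes $g^n$ of the form \eqref{eq3.6}, as in Remark~\ref{rem3.1}(i); for such $g^n$ the backward integral reduces to a finite sum of $\FF^{\beta}$-increments multiplying potentials of the type $R^{0,T}(\cdot)$, so each $u_3^n(\omega,\cdot)$ is quasi-continuous. Using the backward-It\^o isometry together with the Burkholder--Davis--Gundy inequality, the increments $u_3^n-u_3^m$ can be controlled q.e.\ by the potential $R^{0,T}(\|g^n-g^m\|^2)$, which reduces the matter to a capacity estimate of $\{|u_3^n-u_3^m|>\varepsilon\}$ exactly in the spirit of the computation in Lemma~\ref{stw2.1} (using $\hat R^{0,T}$ and measures in $\hat S_{00}$). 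One then upgrades $L^2$-convergence to quasi-uniform convergence along a subsequence and concludes that $u_3(\omega,\cdot)$, hence $u(\omega,\cdot)$, is quasi-continuous for $P$-a.e.\ $\omega$.

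For the identification $Y_t=u(\BBX_t)$ I would use the Markov property of $\BBM$ and the shift operator $\theta_t$, extended to $\Omega\times\Omega'$ as above. Writing $\Lambda_t$ as a functional of the shifted trajectory and the shifted Brownian motion $\beta^\iota\circ\theta_t$, the strong Markov property together with Lemma~\ref{lm3.333} should yield $\BE_z(\Lambda_t\,|\,\mathcal{A}_t)=u(\BBX_t)$ for every $z\in E_{0,T}\setminus N$. The main obstacle, and the step requiring the most care, is the backward-It\^o term: the backward integral is measurable with respect to the \emph{future} of $\beta$, so I must check that, under~(H5), the shift acts consistently on $\int_t^{T_\iota}g(\BBX_r)\,d^{\dagger}\beta^\iota_r$ and that conditioning on $\mathcal{A}_t=\FF^{\beta^\iota}_{T_\iota}\vee\FF^\BBX_t$ interacts correctly with the Markov restart at time $t$. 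This reconciliation of the time-reversed stochastic calculus of $\beta$ with the forward Markov structure of $\BBX$ is the crux; once it is in place, the c\`adl\`ag regularity of $Y$ and the quasi-continuity of $u$ make \eqref{eq2.31} hold simultaneously for all $t\in[0,T_\iota]$, $\BP_z$-a.s.
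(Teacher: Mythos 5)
Your proposal assembles the right ingredients but leaves the heart of the proof unproven. The step you yourself call ``the crux'' --- that the shift $\theta_\tau$ and the conditioning on $\mathcal{A}_t$ interact correctly with the backward It\^o integral and with the martingale part --- is not a technicality to be ``put in place''; it \emph{is} the proof. The paper establishes, for every $(\mathcal{A}_t)$-stopping time $\tau\le T_\iota$: (i) the shift identities \eqref{eq2.41}, \eqref{eq2.42}, \eqref{eq2.142} for the drift, backward-integral and terminal terms (the backward integral being handled through its Riemann sums and the relation $\iota(0)\circ\theta_\tau=\iota(0)+\tau$); (ii) the vanishing of the conditional expectation \eqref{eq501} of the shifted martingale increment, which rests on \cite[Theorem 50.19]{Sharpe} applied for fixed $\omega\in\Omega$; and (iii) Lemma \ref{lm3.333}, needed even to assert that the $\BP_z$-a.s. equation for $Y_0$ survives composition with $\theta_\tau$. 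Only then does one get $Y_0\circ\theta_\tau=Y_\tau$ (formula \eqref{eq2.61}), hence $u(\BBX_\tau)=E_{\BBX_\tau}Y_0=E_z(Y_0\circ\theta_\tau|\FF_\tau^{\BBX})=Y_\tau$ by the strong Markov property, and since this holds at \emph{all} stopping times, the Section Theorem upgrades it to the indistinguishability statement \eqref{eq2.31}. Your sketch asserts the conclusion of (i)--(iii) (``should yield'') without proving any of it, so the identification half of the proposition is a genuine gap, and it is exactly the nonroutine half.

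Your treatment of quasi-continuity also inverts the logic in a way that creates a second gap. You prove (or sketch) quasi-continuity of $u(\omega;\cdot)$ analytically first, and then invoke it together with the c\`adl\`ag property of $Y$ to make \eqref{eq2.31} hold for all $t$ simultaneously. But to pass from quasi-continuity of $u(\omega;\cdot)$ to path regularity of $t\mapsto u(\BBX_t)$ under $\BP_z$ for q.e. $z$, with an exceptional set \emph{not depending on} $\omega$, you run into the difficulty flagged in the introduction of the paper: the nest of the quasi-continuous function $u(\omega;\cdot)$ depends on $\omega$; moreover $u$ is in general not in $\mathbb{W}$ of \eqref{eq1.10} (see the Remark following Proposition \ref{lm4.7}), so deterministic parabolic potential theory does not supply a usable nest. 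The paper's proof avoids this circle by going the opposite way: \eqref{eq2.31} is proved first, purely probabilistically, and quasi-continuity is then \emph{deduced} from it via the Hunt property of $\BBX$, quasi-left continuity of $(\FF^{\BBX}_t)$, the predictable Section Theorem and \cite[Theorem IV.5.29]{MR}. Finally, a smaller but real flaw in your approximation scheme: for elementary $g^n$ of the form \eqref{eq3.6} the backward integral does \emph{not} reduce to a finite sum of $\beta$-increments multiplying potentials, because the integrand $b_j(X_t)$ still evolves with the process; one must either discretize further or apply a stochastic Fubini argument, and in addition $E_z\int_0^{T_\iota}g(\BBX_t)\,d^\dagger\beta^\iota_t$ is defined only up to a $P$-null set for each fixed $z$, so producing a jointly measurable, $\omega$-wise quasi-continuous version already requires machinery of the kind developed in Lemmas \ref{lm.200} and \ref{lm.201} --- machinery your sketch presupposes rather than supplies.
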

\begin{proof}
By \cite[Theorem 4.1.1]{FOT} we may and will assume that $N$ is
properly exceptional. Let $(Y,M)$ be the pair of processes of
Theorem \ref{stw2.2}. We shall show  that for every $z\in
E_{0,T}\setminus N$,
\begin{equation}
\label{eq2.61} Y_0\circ\theta_\tau=Y_\tau,\quad
\mathbb{P}_z\mbox{-a.s.}
\end{equation}
for every $(\mathcal{A}_t)$-stopping time $\tau$ such that $0\le
\tau\le T_\iota$. To this end,  first observe  that
$\mathbb{P}_z\mbox{-a.s.}$ we have
\begin{equation}
\label{eq2.41}
\Big(\int_0^{T_\iota}f(\BBX_t)\,dt\Big)\circ\theta_\tau
=\int_\tau^{T_\iota}f(\BBX_t)\,dt,
\end{equation}
\begin{equation}
\label{eq2.42} \Big(\int_0^{T_\iota}g(\BBX_t)\,d^\dagger
\beta_t^\iota\Big)\circ\theta_\tau
=\int_\tau^{T_\iota}g(\BBX_t)\,d^\dagger\beta_t^\iota.
\end{equation}
Indeed, the first equation holds since $A_t=\int_0^tf(\BBX_r)\,dr$
is a continuous AF of $\BBM$ for fixed $\omega\in\Omega$. The
second one may be deduced from the identity
\[
\Big(\sum_{0\le t_i\le T_\iota}g(\BBX_{t_{i+1}})
(\beta^\iota_{t_{i+1}}-\beta^\iota_{t_i})\Big)\circ
\theta_\tau=\sum_{0\le t_i\le T_\iota-\tau}g(\BBX_{t_{i+1}+\tau})
(\beta^\iota_{t_{i+1}+\tau}-\beta^\iota_{t_\iota+\tau}),
\]
which holds $\mathbb{P}_z$-a.s. Here we used the fact that
$\iota(0)\circ\theta_\tau=\iota(0)+\tau$. It is also clear that
\begin{equation}
\label{eq2.142} \varphi(\mathbf{X}_{T_\iota})\circ\theta_\tau
=\varphi(\mathbf{X}_{T_\iota}).
\end{equation}
By \cite[Theorem 50.19]{Sharpe},
$(N_t=(M_{t-\tau}\circ\theta_\tau-M_0\circ\theta_\tau)
\mathbf{1}_{[\tau,\infty)}(t),\FF^{\mathbf{X}}_t, t\ge 0)$ is a
martingale for fixed $\omega\in\Omega$. Hence
\begin{equation}
\label{eq501} \mathbb{E}_z\Big((\int_0^{T_\iota}\,dM_t)
\circ\theta_\tau|\mathcal{A}_\tau\Big)=
E_z\Big((\int_0^{T_\iota}\,dM_t)
\circ\theta_\tau|\FF_\tau^{\mathbf{X}}\Big)
=E_z(N_{T_\iota+\tau}|\FF_\tau^{\mathbf{X}})=0.
\end{equation}
We know that for every $z\in E_{0,T}\setminus N$,
\[
Y_0=\varphi(\mathbf{X}_{T_\iota})
+\int_0^{T_\iota}f(\mathbf{X}_r)\,dr
-\int_0^{T_\iota}g(\mathbf{X}_r)\,d^{\dagger}\beta^{\iota}_r
-\int_0^{T_\iota} dM_r,\quad \mathbb{P}_z\mbox{-a.s.}
\]
By Lemma \ref{lm3.333} and (\ref{eq2.41})--(\ref{eq2.142}),  for
every $z\in E_{0,T}\setminus N$,
\[
Y_0\circ \theta_\tau=\varphi(\mathbf{X}_{T_\iota})
+\int_\tau^{T_\iota}f(\mathbf{X}_r)\,dr
-\int_\tau^{T_\iota}g(\mathbf{X}_r)\,d^{\dagger}\beta^{\iota}_r
-(\int_0^{T_\iota}dM_r)\circ\theta_\tau,\quad
\mathbb{P}_z\mbox{-a.s.}
\]
Since $Y_0$ is $\mathcal{A}_0$-measurable, $Y_0\circ\theta_\tau$
is $\mathcal{A}_\tau$-measurable. Therefore by (\ref{eq501}),
\[
Y_0\circ \theta_\tau
=\mathbb{E}_z\Big(\varphi(\mathbf{X}_{T_\iota})
+\int_\tau^{T_\iota}f(\mathbf{X}_r)\,dr
-\int_\tau^{T_\iota}g(\mathbf{X}_r)\,d^{\dagger}\beta^{\iota}_r
|\mathcal{A}_\tau\Big),\quad \mathbb{P}_z\mbox{-a.s.}
\]
Thus (\ref{eq2.61}) is satisfied for $z\in E_{0,T}\setminus N$.
Since $u(z)=E_zY_0$, using the strong Markov property and
(\ref{eq2.61}) we see that for every $z\in E_{0,T}\setminus N$,
\[
u(\BBX_\tau)=E_{\BBX_\tau}Y_0=E_z(Y_0\circ\theta_\tau|\FF_\tau^X)=
E_z(Y_\tau|\FF_\tau^X)=\mathbb{E}_z(Y_\tau|\mathcal{A}_\tau)
=Y_\tau,\quad \mathbb{P}_z\mbox{-a.s.}
\]
A standard approximation argument shows that the process
$u(\mathbf{X})$  is optional (with respect to $(\mathcal{A}_t)$).
Therefore the above equality implies (\ref{eq2.31})  by the
Section Theorem. To prove quasi-continuity of $u(\omega;\cdot)$
for $P$-a.s. $\omega\in\Omega$,  fix $\omega\in\Omega$ such that
(\ref{eq2.31}) holds $P_{m_T}$-a.s. Let $\tau$  be a predictable
$(\FF^{\mathbf{X}}_t)$-stopping time such that $0\le\tau\le
T_\iota$. Since $\mathbf{X}$ is a Hunt process,
$X_{\tau-}=X_\tau$, $P_z$-a.s. for every $z\in E_{0,T}$,  and
moreover, the filtration $(\FF^{\mathbf{X}}_t)$ is quasi-left
continuous, so  $M_{\tau-}=M_\tau$, $P_z$-a.s. for every $z\in
E_{0,T}$ (see \cite[Theorems A.3.2, A.3.6]{FOT}). By this and
(\ref{eq2.31}),
\[
u(\mathbf{X}_{\tau-})=u(\mathbf{X}_\tau)
=u(\mathbf{X})_{\tau-}\,,\quad P_{m_T}\mbox{-a.s.}
\]
Of course the process $\{u(\mathbf{X})_{t-},\, t\ge 0\}$ is
predictable. Since the function  $u(\omega;\cdot)$ is nearly
Borel, the process $\{u(\mathbf{X}_{t-}),t\ge0\}$ is predictable,
too. Therefore applying the Section Theorem yields
\[
u(\mathbf{X}_{t-})=u(\mathbf{X})_{t-}\,,\quad t\in [0,T_\iota],
\quad P_{m_T}\mbox{-a.s.},
\]
which together with (\ref{eq2.31}) and \cite[Theorem IV.5.29]{MR}
shows  that $u(\omega;\cdot)$ is quasi-continuous.
\end{proof}

\begin{theorem}
\label{tw3.4} Assume that $\varphi,f,g$ satisfy
\mbox{\rm(H1)--(H5)}. Let $N$ be defined by
\mbox{\rm(\ref{eq3.07})} and let $Y$ be the process of Theorem
\ref{stw2.2}. Then for $P$-a.s. $\omega\in\Omega$ the function
$u(\omega;\cdot):E_{0,T}\setminus N\rightarrow\BR$ defined as
\begin{equation}
\label{eq2.7} u(z)=E_z\Big(\varphi(\BBX_{T_\iota})
+\int_0^{T_\iota}f(\BBX_t,Y_t)\,dt+\int_0^{T_\iota}g(\BBX_t,Y_t)\,d^\dagger
\beta_t^\iota\Big)
\end{equation}
is quasi-continuous and for every $z\in E_{0,T}\setminus N$,
\begin{equation}
\label{eq2.3} Y_t=u(\BBX_t),\quad
t\in[0,T_\iota],\quad\BP_z\mbox{\rm-}a.s.
\end{equation}
In particular, $u$ is a solution of \mbox{\rm(\ref{eq4.1})}.
\end{theorem}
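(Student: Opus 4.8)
The plan is to reduce the nonlinear statement to the linear case already settled in Proposition \ref{stw3.41}, following the same scheme as in the proof of Theorem \ref{stw2.2}. The crucial observation is that if an approximating process has the form $Y^n_t=u^n(\BBX_t)$ for a quasi-continuous, adapted $u^n$, then the frozen coefficients $f(\BBX_t,Y^n_t)=f(\BBX_t,u^n(\BBX_t))$ and $g_k(\BBX_t,Y^n_t)=g_k(\BBX_t,u^n(\BBX_t))$ are genuine functions of the position $\BBX_t$; by Remark \ref{rem3.1}(ii) they are of class $M$, and together with (H1), (H3) they satisfy hypotheses (H1), (H2), (H5) of Proposition \ref{stw3.41}. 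This is precisely what lets us avoid any direct ``flow property'' $Y_t\circ\theta_\tau=Y_{t+\tau}$ for the true solution $Y$, which would otherwise be the obstacle in mimicking the shift computation (\ref{eq2.61}) directly.

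First I would treat the case where $f$ is Lipschitz continuous in $y$ uniformly in $t$. Let $(Y^n,M^n)$ be the Picard iterates of Step 2 of the proof of Theorem \ref{tw1.1}, realised as $(\FF_t)$-adapted processes by Theorem \ref{stw2.2}, with $Y^0=0$. I argue by induction: $u^0\equiv0$ is quasi-continuous with $Y^0_t=u^0(\BBX_t)$; assuming $Y^n_t=u^n(\BBX_t)$ with $u^n$ quasi-continuous and adapted, the coefficients $\bar f^n(z):=f(z,u^n(z))$, $\bar g^n(z):=g(z,u^n(z))$ are random functions on $E_{0,T}$ satisfying (H1), (H2), (H5), so Proposition \ref{stw3.41} yields a quasi-continuous $u^{n+1}$ with
\begin{equation*}
u^{n+1}(z)=E_z\Big(\varphi(\BBX_{T_\iota})+\int_0^{T_\iota}\bar f^n(\BBX_t)\,dt+\int_0^{T_\iota}\bar g^n(\BBX_t)\,d^\dagger\beta_t^\iota\Big)
\end{equation*}
and $Y^{n+1}_t=u^{n+1}(\BBX_t)$, $\BP_z$-a.s. for $z$ outside an exceptional set; taking the union over $n$ keeps this set $\EE$-exceptional, so the representation holds off a single properly exceptional $N$. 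This closes the induction.

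Next I would pass to the limit. By (\ref{eq2.2}) we have $(Y^n,M^n)\to(Y,M)$ in $\SM^2\times\MM^2$, whence $\BE_z\sup_{t\le T_\iota}|u^n(\BBX_t)-u^m(\BBX_t)|^2\to0$; integrating against a suitable finite measure gives control under $\BP_{m_T}$. Upgrading this $\SM^2$-convergence to quasi-uniform convergence of $\{u^n\}$ via a capacity (maximal-inequality) estimate for the form $\EE^{0,T}$, I would extract a subsequence converging quasi-uniformly to a quasi-continuous $u$; since a uniform limit of quasi-continuous functions is quasi-continuous, $u$ is quasi-continuous and $Y_t=u(\BBX_t)$, $\BP_z$-a.s. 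Letting $n\to\infty$ in the displayed formula for $u^{n+1}$, using $u^n(\BBX_t)=Y^n_t\to Y_t$ together with (H3) for the stochastic integral (via the isometry exactly as in (\ref{eq1.7})), identifies $u$ with the function defined by (\ref{eq2.7}), which establishes (\ref{eq2.3}) in the Lipschitz case.

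For general $f$ satisfying (H1)--(H5), I would invoke the monotone approximation $f_n$ of Steps 3--4 of the proof of Theorem \ref{tw1.1}; each datum $(\varphi,f_n,g)$ is covered by the Lipschitz case, producing quasi-continuous $u^n$ with $Y^n_t=u^n(\BBX_t)$, where $(Y^n,M^n)$ solves BDSDE$_z(\varphi,f_n,g)$. Since (\ref{eq2.2}) holds for these iterates as well, the same capacity argument yields a quasi-continuous limit $u$ with $Y_t=u(\BBX_t)$, and (\ref{eq2.7}) is recovered in the limit. Finally, the integrability and quasi-continuity requirements in the definition of a solution of (\ref{eq4.1}) follow from the a priori estimate of Proposition \ref{stw1.2} (giving condition (a)) and from (\ref{eq2.3}) evaluated at $t=0$ after taking $E_z$ (giving condition (b)), so $u$ solves (\ref{eq4.1}). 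The main obstacle is exactly this limit passage preserving quasi-continuity: one must convert convergence of the processes $u^n(\BBX)$ in $\SM^2$ into quasi-uniform convergence of the functions $u^n$, which is where the potential theory of the generalized Dirichlet form $\EE^{0,T}$ carries the essential weight.
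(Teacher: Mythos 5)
Your overall reduction scheme is the same as the paper's: induction through Proposition \ref{stw3.41} for the Picard iterates in the Lipschitz case, then the monotone approximation of Steps 3--4 of Theorem \ref{tw1.1}, with the key observation that frozen coefficients $f(\cdot,u^n(\cdot))$, $g(\cdot,u^n(\cdot))$ are admissible data for the linear proposition. The gap lies in the final limit passage, precisely the step you single out as carrying ``the essential weight''. You propose to upgrade $\SM^2$-convergence of the processes $u^n(\BBX)$ to quasi-uniform convergence of the functions $u^n$ via a ``capacity (maximal-inequality) estimate for the form $\EE^{0,T}$''. No such estimate is proved or cited, and none is readily available in this setting: the capacity associated with $\EE^{0,T}$ (Oshima's, in the spirit of Pierre's parabolic capacity) is tied to the norm of the space $\WW_T$, which involves the time derivative, and the functions $u^n$, $u^n-u^m$ are neither shown to converge in that space nor even to belong to it --- indeed the remark following Proposition \ref{lm4.7} shows that solutions typically satisfy $u\notin\mathbb{W}$. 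This is exactly why the classical route (capacity of $\{|v|>\varepsilon\}$ controlled by an energy norm of $v$, as in the proof of \cite[Theorem 2.1.4]{FOT}) is unavailable here; converting process-level maximal bounds into capacity bounds would require equilibrium-measure machinery for the generalized time-dependent form, a substantial undertaking your proposal leaves unaddressed. Note also that Lemma \ref{lm.200} does not fill this hole: it converts $m_T$-a.e.\ statements about AFs into q.e.\ statements, but produces neither quasi-uniform convergence of functions nor quasi-continuity of a limit function.

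The gap is avoidable, and this is how the paper proceeds. First, no uniformity whatsoever is needed for the identification $Y_t=u(\BBX_t)$: one has $u_n(z)=E_zY_0^n\rightarrow E_zY_0=u(z)$ for \emph{every single} $z\in E_{0,T}\setminus N$, and since $N$ may be taken properly exceptional (by \cite[Theorem 4.1.1]{FOT}), the trajectory of $\BBX$ stays in $E_{0,T}\setminus N$ under $\BP_z$; hence $u_n(\BBX_t)\rightarrow u(\BBX_t)$ pointwise along the path, which combined with $Y^n\rightarrow Y$ in $\SM^2$ and $Y^n_t=u_n(\BBX_t)$ yields (\ref{eq2.3}). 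Second, quasi-continuity of $u$ then comes by a bootstrap rather than by a convergence argument: once (\ref{eq2.3}) holds, formula (\ref{eq2.7}) exhibits $u$ as the solution of a \emph{linear} equation with frozen coefficients $f(\cdot,u(\cdot))$, $g(\cdot,u(\cdot))$ (admissible by Remark \ref{rem3.1}(ii) and Lemma \ref{stw2.1}), so Proposition \ref{stw3.41} --- whose proof obtains quasi-continuity directly from the Section Theorem and Hunt-process properties --- applies to $u$ itself. If you replace your quasi-uniform convergence step by these two observations, your argument coincides with the paper's proof.
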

\begin{proof}
By \cite[Theorem 4.1.1]{FOT} we may and will assume that $N$ is
properly exceptional. Let $(Y,M)$ be the pair of processes of
Theorem \ref{stw2.2}. By the proof of Theorem {\ref{tw1.1}}, for
every $z\in E_{0,T}\setminus N$  the pair $(Y,M)$ is under $\BP_z$
a limit in $S^2\otimes \MM^2$ of solutions $(Y^n,M^n)$ of some
linear BDSDEs. In particular, $u_n(z)=E_zY_0^n\rightarrow
E_zY_0=u(z)$ for every $z\in E_{0,T}\setminus N$, which when
combined with the fact that $N$ is properly exceptional implies
that $u_n(\BBX_t)\rightarrow u(\BBX_t)$, $t\in[0,T_\iota]$,
$\BP_z$-a.s. for every $z\in E_{0,T}\setminus N$. Since we  know
from Proposition \ref{stw3.41} that (\ref{eq2.3}) holds for
solutions of linear equations, we conclude that (\ref{eq2.3})
holds in the general case. Quasi-continuity of $u$  follows now
from Proposition \ref{stw3.41}. Since  by (\ref{eq2.7}),
(\ref{eq2.3}) and Theorem \ref{stw2.2} conditions (a) (b) of the
definition of a solution SPDE (\ref{eq4.1}) are satisfied, $u$ is
a solution of (\ref{eq4.1}).
\end{proof}

\begin{remark}
\label{rem3.8} Assume (H1)--(H5). \smallskip\\
(i) From Theorems \ref{stw2.2}  and \ref{tw3.4} we know that there
exists a pair $(Y,M)$ (not depending on $z$) of $(\FF_t)$-adapted
c\`adl\`ag processes such that $(Y,M)$ is a solution of
BDSDE${}_z(\varphi,f,g)$ for q.e. $z\in E_{0,T}$ and that the
solution $(Y,M)$ provides  a stochastic representation of  the
solution $u$ of SPDE (\ref{eq4.1}). The representation has the
form (\ref{eq2.7}), or equivalently,
\[
u(z)=Y_0,\quad \BP_z\mbox{\rm-a.s. for q.e. }z\in E_{0,T}.
\]
(ii) On the contrary, if $u$ is a solution of (\ref{eq4.1}) then
there is an $(\FF_t)$-adapted c\`adl\`ag process $M$ (not
depending on $z$) such that for q.e. $z\in E_{0,T}$ the pair
$(u(\BBX),M)$ is a solution of BDSDE${}_z(\varphi,f,g)$. Indeed,
given a solution  $u$ let us define $f_u,g_u$ as
$f_u(z)=f(z,u(z))$, $g_u(z)=g(z,u(z))$, $z\in E_{0,T}$. Let
$(Y,M)$ be the pair of processes of Theorem \ref{stw2.2} such that
$(Y,M)$ is a solution of the linear BDSDE${}_z(\varphi,f_u,g_u)$
for q.e. $z\in E_{0,T}$. By Proposition \ref{stw3.41},
$Y=u(\BBX)$, so the pair $(u(\BBX),M)$ is a solution of
BDSDE${}_z(\varphi,f_u,g_u)$, which means that it is a solution of
BDSDE${}_z(\varphi,f,g)$. Thus starting from $u$ we can construct
a solution $(u(\BBX),M)$ of the BDSDE
\begin{align}
\label{eq3.14} u(\BBX_t)&=\varphi(\BBX_{T_{\iota}})
+\int^{T_{\iota}}_tf(\BBX_r,u(\BBX_r))\,dr\\
&\quad+\int^{T_{\iota}}_0g(\BBX_r,u(\BBX_r))\,d^{\dagger}\beta^{\iota}_r
-\int^{T_{\iota}}_tdM_r,\quad t\in[0,T_{\iota}],\quad
\BP_z\mbox{\rm-a.s.} \nonumber
\end{align}
\end{remark}

\begin{proposition}
\label{stw3.4} Under \mbox{\rm(H1)--(H5)} there exists at most one
solution of \mbox{\rm(\ref{eq4.1})}.
\end{proposition}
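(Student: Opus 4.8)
The plan is to reduce uniqueness for the SPDE (\ref{eq4.1}) to the BDSDE uniqueness already established in Corollary \ref{wn1.1}. So let $u^1$ and $u^2$ be two solutions of (\ref{eq4.1}). The first step is to attach to each of them a solution of one and the same nonlinear BDSDE. By Remark \ref{rem3.8}(ii), for each $i\in\{1,2\}$ there is an $(\FF_t)$-adapted c\`adl\`ag process $M^i$ such that, for q.e.\ $z\in E_{0,T}$, the pair $(u^i(\BBX),M^i)\in\SM^2\times\MM^2$ solves $\mathrm{BDSDE}_z(\varphi,f,g)$, i.e.\ equation (\ref{eq3.14}) with $u$ replaced by $u^i$. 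The essential point is that substituting $Y=u^i(\BBX)$ into the nonlinear coefficients reproduces the linearized data $f_{u^i},g_{u^i}$, so both pairs solve the \emph{same} problem $\mathrm{BDSDE}_z(\varphi,f,g)$ driven by the common data $(\varphi,f,g)$.

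Next I would invoke uniqueness. By Lemma \ref{stw2.1} the data $(\varphi(\BBX_{T_\iota}),f(\BBX_\cdot,\cdot),g(\BBX_\cdot,\cdot))$ satisfy (A1)--(A4), in particular (A3), under $\BP_z$ for every $z$ outside the exceptional set $N$ of (\ref{eq3.07}). Hence Corollary \ref{wn1.1} applies under $\BP_z$ and shows that $\mathrm{BDSDE}_z(\varphi,f,g)$ has at most one solution in $\SM^2\times\MM^2$. Comparing the two solutions produced in the previous step, we obtain, for q.e.\ $z\in E_{0,T}$,
\[
u^1(\BBX_t)=u^2(\BBX_t),\qquad t\in[0,T_\iota],\quad\BP_z\text{-a.s.}
\]
Evaluating at $t=0$ and using $\BBX_0=z$ $P_z$-a.s., the $\omega'$-dependence drops out and we get
\[
u^1(\omega;z)=u^2(\omega;z)\quad\text{for }P\text{-a.e. }\omega,\ \text{for q.e. }z\in E_{0,T}.
\]

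It remains to pass from this equality, in which the capacity (over $z$) is quantified before the measure $P$ (over $\omega$), to the statement that for $P$-a.e.\ $\omega$ the quasi-continuous functions $u^1(\omega;\cdot)$ and $u^2(\omega;\cdot)$ coincide q.e. I expect this interchange to be the main obstacle, precisely because the $\EE$-exceptional set coming out of the BDSDE argument may depend on $\omega$, so one cannot simply read off a single $P$-null set off which the function is determined q.e. The remedy I would use is a Fubini argument: every $\EE$-exceptional set is $m_T$-negligible, so the displayed equality holds for $m_T$-a.e.\ $z$ and $P$-a.e.\ $\omega$; integrating $|u^1-u^2|\wedge1$ first in $\omega$ and then in $z$ (legitimate by joint measurability of the random fields) yields $u^1(\omega;\cdot)=u^2(\omega;\cdot)$ $m_T$-a.e.\ for $P$-a.e.\ $\omega$. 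Since both functions are quasi-continuous for $P$-a.e.\ $\omega$, and two quasi-continuous functions that agree $m_T$-a.e.\ agree q.e.\ (a standard property of quasi-continuous functions, see \cite{MR}), I would conclude that $u^1=u^2$ q.e., which is the asserted uniqueness.
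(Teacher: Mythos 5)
Your proof is correct and follows essentially the same route as the paper's: Remark \ref{rem3.8}(ii) attaches to each solution $u^i$ a pair $(u^i(\BBX),M^i)$ solving the common problem $\mathrm{BDSDE}_z(\varphi,f,g)$, and then Lemma \ref{stw2.1} together with Corollary \ref{wn1.1} yields $u^1(z)=u^2(z)$ $P$-a.s.\ for q.e.\ $z\in E_{0,T}$, which is exactly where the paper's proof stops. Your final paragraph --- the Fubini argument combined with quasi-continuity to interchange the quantifiers and conclude that $u^1(\omega;\cdot)=u^2(\omega;\cdot)$ q.e.\ for $P$-a.e.\ $\omega$ --- is a correct further refinement that the paper does not carry out, but it is an addition rather than a deviation.
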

\begin{proof}
Let $u_1,u_2$ be two solutions of (\ref{eq4.1}).  By Remark
\ref{rem3.8}, for q.e. $z\in E_{0,T}$ the processes $u_1(\BBX)$,
$u_2(\BBX)$ are the first components of solutions of
BDSDE$_z(\varphi,f,g)$. Therefore by Corollary \ref{wn1.1} and
Lemma \ref{stw2.1}, for q.e. $z\in E_{0,T}$ we have
$u_1(z)=u_2(z)$ $P$-a.s.
\end{proof}
\medskip

In the next section we prove some results on regularity of the
solution $u$ of SPDE (\ref{eq4.1}). Here let us only note that our
proofs are based on equation (\ref{eq3.14}). Clearly
(\ref{eq3.14}) implies that for q.e. $z\in E_{0,T}$,
\begin{equation}
\label{eq3.15} A_t:=u(\BBX_t)-u(\BBX_0)=M_t+N_t,\quad
t\in[0,T_{\iota}],\quad \BP_z\mbox{\rm-a.s.},
\end{equation}
with $N$ defined by (\ref{eq1.06}). We shall see that $M$ is a
random martingale AF  of finite energy and $N$ is a random
continuous AF  of finite,  but in most cases  nonzero energy. This
means that (\ref{eq3.15}) can not be viewed as Fukushima's
decomposition of $A$. Nevertheless, we will prove some estimates
for the energy $e(M)$ of $M$, which when combined with a priori
estimates for $(u(\BBX),M)$ obtained in Proposition \ref{stw1.2}
lead to  energy estimates for $u$.

\section{Regularity of solutions}
\label{sec4}

Let $\HH$ be some Hilbert space  equipped with the norm
$|\cdot|_{\HH}$, let $\BB(\HH)$ denote the Borel $\sigma$-field of
subsets of $\HH$ and let
\begin{itemize}
\item $M^2(0,T;\HH)$ be the space
of all $\FF\otimes\BB([0,T])/\BB(\HH)$ measurable processes
$v:\Omega\times[0,T]\rightarrow\HH$ such that for a.e.
$t\in[0,T]$ the random variable $v(t)$ is
$\FF^{\beta}_{t,T}$-measurable and
\[
E\int^T_0|v(t)|^2_{\HH}\,dt<\infty,
\]
\item $S^2(0,T;\HH)$ be the subspace of $M^2(0,T;\HH)$ consisting of
all processes $v$ such that
\[
\sup_{0\le t\le T}E|v(t)|^2_{\HH}<\infty.
\]
\end{itemize}

Let $u$ be a solution of (\ref{eq4.1}) of Theorem \ref{tw3.4} and
let $u(t)=u(t,\cdot)$. In this section we show that $t\mapsto
u(t)$ belongs to the space $S^2(0,T;H)\cap M^2(0,T;V)$ with $H,V$
defined in Section \ref{sec3.1}, and we prove energy estimates for
$u$, i.e. estimates of $u$ in the norm $\|\cdot\|_{\BB^{0,T}}$
defined as $\|u\|^2_{\BB^{0,T}}=\BB^{0,T}(u,u)$, where
$\BB^{0,T}(u,u)$ is defined by (\ref{eq3.16}). Note that by
assumption (b) in Section \ref{sec3.1}, the norm
$\|\cdot\|_{\BB^{0,T}}$ is equivalent to the usual norm in the
space $L^2(0,T;V)$. We begin with linear equations.

\begin{proposition}
\label{stw2.5} Assume that $f,g$ do not depend on the last
variable $y$ and \mbox{\rm(H1)} is satisfied. Then $u$ defined by
$\mbox{\rm{(\ref{eq2.71})}}$ belongs to $M^2(0,T;V)$ and there is
$c>0$ such that
\[
E\|u\|^2_{\BB^{0,T}}\le c
E\Big(\|\varphi\|^2_{L^2(E;m)}+\|f\|^2_{L^2(E_{0,T};m_T)}
+\sum_{k=1}^{\infty}\|g_k\|^2_{L^2(E_{0,T};m_T)}\Big).
\]
\end{proposition}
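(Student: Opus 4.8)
The plan is to exploit the stochastic representation \eqref{eq2.71} together with the BDSDE structure from Section \ref{sec3}. By Proposition \ref{stw3.41}, for q.e. $z\in E_{0,T}$ the function $u$ satisfies $Y_t=u(\BBX_t)$, where $(Y,M)$ is the linear BDSDE solution of Theorem \ref{stw2.2} with data $(\varphi,f,g)$ independent of $y$. First I would apply the It\^o--Meyer formula to $|Y_t|^2$ under $\BP_z$, exactly as in the proof of Proposition \ref{stw1.2}, to obtain the identity relating $|Y_0|^2$, the terminal term $|\varphi(\BBX_{T_\iota})|^2$, the energy of $M$, and the drift and noise integrals. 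Integrating over $z$ against the measure $m_T$ (or against a suitable $\nu\in\hat S_{00}$) and using the relation $G^{0,T}=R^{0,T}$, $\hat G^{0,T}=\hat R^{0,T}$ on $L^2(E_{0,T};m_T)$ should convert the probabilistic expressions into the Dirichlet form $\BB^{0,T}$ evaluated on $u$. The key mechanism is that, for functions in the range of $R^{0,T}$, the additive functional $A^{[u]}_t=u(\BBX_t)-u(\BBX_0)$ decomposes as in \eqref{eq3.15}, and the energy of the martingale part $M$ is computed by the analogue of \eqref{eq1.07}, which in the linear case reads $Ee(M)=E(\|u(0)\|^2_{L^2(E;m)}+\int_0^T B^{(t)}(u(t),u(t))\,dt-\tfrac12\int_{E_{0,T}}|u|^2\,k(dz))$.

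Concretely, the second step is to identify $\int_0^T B^{(t)}(u(t),u(t))\,dt=\|u\|^2_{\BB^{0,T}}$ as the dominant term and to bound the remaining contributions. Taking expectations with respect to $P$ and using the a priori bound of Proposition \ref{stw1.2} (applied under $\BP_z$ with the obvious markovian data) gives control of $\BE_z\sup_t|Y_t|^2$, $\BE_z[M]_{T_\iota}$, and the drift integral in terms of $\BE_z(|\varphi(\BBX_{T_\iota})|^2+\int_0^{T_\iota}(|f(\BBX_t)|^2+\|g(\BBX_t)\|^2)\,dt)$. Integrating these markovian estimates against $m_T$ and passing through $R^{0,T}$, $\hat R^{0,T}$ turns the right-hand side into $E(\|\varphi\|^2_{L^2(E;m)}+\|f\|^2_{L^2(E_{0,T};m_T)}+\sum_k\|g_k\|^2_{L^2(E_{0,T};m_T)})$, which is exactly the desired bound; here the duality computation
\[
\int_{E_{0,T}}\Big(\BE_z\int_0^{T_\iota}|f(\BBX_t)|^2\,dt\Big)\,m_T(dz)
=E(f^2,\hat R^{0,T}\mathbf 1)_{L^2(E_{0,T};m_T)}
\]
is the same device already used in Lemma \ref{stw2.1}.

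The main obstacle, I expect, is rigorously establishing the energy identity \eqref{eq1.07} in the present linear, time-dependent, nonsymmetric setting and thereby showing that $u\in M^2(0,T;V)$ in the first place, i.e. that $\|u\|_{\BB^{0,T}}$ is finite almost surely and square-integrable. Two subtleties arise. First, because $N$ has nonzero energy (from the $g\,d^\dagger\beta$ term), \eqref{eq3.15} is not a Fukushima decomposition, so the standard symmetric-form identity for the energy of an AF does not apply directly; one must separate the martingale AF $M$ from the finite-energy continuous AF $N$ and compute $e(M)$ via the mutual energy with elements $\hat R^{0,T}_\alpha\psi$ of the dual range, controlling the nonsymmetric cross terms using the sector condition (assumption (b) of Section \ref{sec3.1}). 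Second, the measurability and adaptedness with respect to the non-increasing family $(\FF_t)$ must be respected when integrating over $\omega\in\Omega$, and the backward It\^o integral must be handled through the time-reversal of Remark \ref{rem2.2} so that the Fubini-type exchange between $E$ and $E_z$ is legitimate. Once the energy identity is in hand, the equivalence of $\|\cdot\|_{\BB^{0,T}}$ with the $L^2(0,T;V)$-norm (from assumption (b)) yields $u\in M^2(0,T;V)$ and the stated estimate simultaneously.
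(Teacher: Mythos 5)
Your plan is essentially the paper's proof of the general energy estimate (Theorem \ref{th4.4}) transplanted to the linear case, and this creates a circularity that you flag but do not resolve. The crux of Proposition \ref{stw2.5} is to show that $E\BB^{0,T}(u,u)$ is finite at all, i.e. that $u\in M^2(0,T;V)$. Your route passes through the energy identity (\ref{eq1.07})/(\ref{eq2.10}); but in the paper that identity (Proposition \ref{lm4.7}) is proved only \emph{after} Proposition \ref{stw2.5}, and it uses Proposition \ref{stw2.5} together with Lemma \ref{lm.port} in an essential way: one needs to know $u\in M^2(0,T;V)$ and $u_\alpha=\alpha R^{0,T}_\alpha u\rightarrow u$ in $M^2(0,T;V)$ before $\BB^{0,T}(u,u)$ even makes sense and before the limit passage in (\ref{eq2.13}) can be carried out. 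Nor can you obtain (\ref{eq2.10}) directly from Fukushima-decomposition theory (Lemma \ref{lm2.0}), because the solution $u$ does not belong to $\WW_T$ --- the remark following Proposition \ref{lm4.7} shows $u\notin\mathbb{W}$ whenever $e(N)>0$, which is exactly the situation when the noise term is nontrivial. So the step you name as ``the main obstacle'' --- establishing the energy identity and thereby showing $u\in M^2(0,T;V)$ in the first place --- is not a technical subtlety to be handled by sector-condition estimates; it is the entire content of the proposition, and your outline provides no mechanism producing a uniform bound in the $V$-norm on any approximation of $u$.

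The paper's actual argument is different and self-contained. It splits $u=w+v$, where $w(z)=E_z\bigl(\varphi(\BBX_{T_\iota})+\int_0^{T_\iota}f(\BBX_t)\,dt\bigr)$ is handled by the deterministic results of \cite{K:JFA} together with standard energy estimates, and $v(z)=E_z\int_0^{T_\iota}g(\BBX_t)\,d^\dagger\beta^\iota_t$ is the stochastic convolution. For $v$ one first derives, via the stochastic Fubini theorem and the Markov property, the resolvent identity (\ref{eq2.9}), $v-\alpha R^{0,T}_\alpha v=E_\cdot\int_0^{T_\iota}e^{-\alpha t}g(\BBX_t)\,d^\dagger\beta^\iota_t$; then It\^o's isometry and the contraction property of $\alpha\hat R^{0,T}_\alpha$ give the bound (\ref{eq4.3}), $E\EE^{(\alpha),0,T}(v,v)\le\sum_k E\|g_k\|^2_{L^2(E_{0,T};m_T)}$, uniformly in $\alpha$; finally \cite[(6.1.28)]{O3} bounds $E\BB^{0,T}(\alpha R^{0,T}_\alpha v,\alpha R^{0,T}_\alpha v)$ by $E\EE^{(\alpha),0,T}(v,v)$, and \cite[Theorem I.2.13]{MR} allows one to conclude membership in $M^2(0,T;V)$ and the estimate by letting $\alpha\rightarrow\infty$. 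This uniform-in-$\alpha$ bound on the approximations $\alpha R^{0,T}_\alpha v$ is precisely the ingredient your proposal lacks. Once Proposition \ref{stw2.5} is available, your strategy (Proposition \ref{stw1.2} plus the energy identity plus Revuz duality as in Lemma \ref{lm2.2}) is exactly how the paper then proves Theorem \ref{th4.4} for the nonlinear equation --- but it cannot be used to prove Proposition \ref{stw2.5} itself.
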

\begin{proof}
Put
\[
w(z)=E_z\Big(\varphi(\BBX_{T_\iota})
+\int_0^{T_\iota}f(\BBX_t)\,dt\Big),\quad z\in E_{0,T}.
\]
From  \cite[Theorem 3.7]{K:JFA} and standard energy estimates for
solutions of PDEs it follows  that $w\in M^2(0,T;V)$ and
\[
\|w\|^2_{\BB^{0,T}}\le \|\varphi\|^2_{L^2(E;m)}
+\|f\|^2_{L^2(E_{0,T};m_T)}.
\]
Put
\[
v(z)=E_z\int_0^{T_\iota}g(\BBX_t)\,d^\dagger\beta_t^\iota,\quad
z\in E_{0,T}.
\]
We are going to show that $v\in M^2(0,T;V)$ and
\begin{equation}
\label{eq2.8} E\|v\|^2_{\BB^{0,T}}\le c\sum_{k\ge1}
E\|g_k\|^2_{L^2(E_{0,T};m_T)}.
\end{equation}
To this end, let us first observe that by the stochastic  Fubini
theorem and Markov property,
\begin{align*}
(R_\alpha^{0,T}v)(z)&=E_z\int_0^{T_\iota}e^{-\alpha r}
E_{\BBX_r}\Big( \int_0^{T_\iota}g(\BBX_t)\,d^\dagger
\beta_t^\iota\Big)\,dr\\&= E_z\int_0^{T_\iota}e^{-\alpha
r}E_{z}\Big( \int_r^{T_\iota}g(\BBX_t)\,d^\dagger
\beta_t^\iota|\FF^{\mathbf{X}}_r\Big)\,dr\\&
=E_z\int_0^{T_\iota}e^{-\alpha r}\Big(
\int_r^{T_\iota}g(\BBX_t)\,d^\dagger\beta_t^\iota\Big)\,dr=
\frac1{\alpha} E_z\int_0^{T_\iota}(1-e^{-\alpha t})
g(\BBX_t)\,d^\dagger\beta_t^\iota.
\end{align*}
Hence
\begin{equation}
\label{eq2.9} v(z)-\alpha(R_\alpha^{0,T}v)(z)=E_z\int_0^{T_\iota}
e^{-\alpha t}g(\BBX_t)\,d^\dagger\beta_t^\iota, \quad z\in
E_{0,T}.
\end{equation}
For given $\alpha>0$ and $v,u\in M^2(0,T;V)$ write
\[
\EE^{(\alpha),0,T}(u,v)=\alpha(u-\alpha
R_\alpha^{0,T}u,v)_{L^2(E_{0,T};m_T)}.
\]
By (\ref{eq2.9}),
\[
E\EE^{(\alpha),0,T}(v,v)=\alpha\int_{E_{0,T}}
E\Big(E_z\int_0^{T_\iota}e^{-\alpha t} g(\BBX_t)\,d^\dagger
\beta_t^\iota\cdot E_z\int_0^{T_\iota}g(\BBX_t)\,d^\dagger
\beta_t^\iota\Big)\,m_T(dz).
\]
Using It\^o's isometry and the fact that
$\alpha\hat{R}^{0,T}_\alpha$ is a contraction on
$L^2(E_{0,T};m_T)$ we conclude from the above that
\begin{align}
\label{eq4.3} E\EE^{(\alpha),0,T}(v,v)&\le
\int_{E_{0,T}}E\Big(E_z\int_0^{T_\iota}e^{-\alpha t}
|g(\BBX_t)|^2\,dt\Big)m_T(dz)\\
&= \sum^{\infty}_{k=1}E\|\alpha
R_\alpha^{0,T}g_k^2\|_{L^1(E_{0,T};m_T)} \le \sum^{\infty}_{k=1}
E\| g_k\|^2_{L^2(E_{0,T};m_T)}. \nonumber
\end{align}
By \cite[(6.1.28)]{O3},
\[
E\BB^{0,T}(\alpha R^{0,T}_\alpha v,\alpha R^{0,T}_\alpha v) \le
E\EE^{(\alpha),0,T}(v,v).
\]
When combined with (\ref{eq4.3})  and \cite[Theorem I.2.13]{MR}
this shows that $v\in M^2(0,T;V)$. Therefore letting
$\alpha\rightarrow\infty$ in (\ref{eq4.3}) yields (\ref{eq2.8}).
\end{proof}

\begin{lemma}
\label{lm.port} Let $u:\Omega\times E_{0,T}\rightarrow \BR_+$ and
let $u_\alpha=\alpha R_\alpha^{0,T}u$, $\alpha>0$.
\begin{enumerate}
\item [\rm{(i)}] If $u\in M^2(0,T;H)$ then
$E\|u_\alpha-u\|^2_{L^2(E_{0,T};m_T)}\rightarrow 0$ as
$\alpha\rightarrow\infty$.
\item [\rm{(ii)}] If $u\in M^2(0,T;V)$ then
$E\mathcal{B}^{0,T}(u_\alpha-u,u_\alpha-u)\rightarrow 0$ as
$\alpha\rightarrow\infty$.
\item [\rm{(iii)}] If $u(z)
=E_z\int_0^{T_\iota}g(\mathbf{X}_t)\,d^\dagger\beta_t$ for some
$g$ satisfying \mbox{\rm(H1)} and not depending on $y$, then for
every $t\in [0,T)$, $E\|u_\alpha(t)-u(t)\|^2_{L^2(E;m)}\rightarrow
0$ as $\alpha\rightarrow\infty$.
\end{enumerate}
\end{lemma}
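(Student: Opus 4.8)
In each of the three parts the plan is to prove a deterministic statement for fixed $\omega$ and then pass to the expectation by dominated convergence; part (iii) will instead follow from a direct second-moment estimate. For part (i), note that since $u\in M^2(0,T;H)$, for $P$-a.e.\ $\omega$ the function $u(\omega,\cdot)$ lies in $L^2(E_{0,T};m_T)$. Recalling that $R_\alpha^{0,T}=G_\alpha^{0,T}$ on $L^2(E_{0,T};m_T)$ and that $\{G_\alpha^{0,T}\}$ is a strongly continuous sub-Markovian contraction resolvent, the classical approximation property of resolvents gives $\|u_\alpha(\omega,\cdot)-u(\omega,\cdot)\|_{L^2(E_{0,T};m_T)}\to0$ for $P$-a.e.\ $\omega$. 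Because $\alpha R_\alpha^{0,T}$ is a contraction on $L^2(E_{0,T};m_T)$ we have the domination $\|u_\alpha(\omega,\cdot)-u(\omega,\cdot)\|^2_{L^2(E_{0,T};m_T)}\le 4\|u(\omega,\cdot)\|^2_{L^2(E_{0,T};m_T)}$, whose right-hand side is $P$-integrable; dominated convergence then yields (i).

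For part (ii) I would reduce to the deterministic claim that $u_\alpha(\omega,\cdot)\to u(\omega,\cdot)$ in $L^2(0,T;V)$ (equivalently in $\|\cdot\|_{\BB^{0,T}}$) for a.e.\ $\omega$, and then dominate. The deterministic claim I would obtain by the standard range-plus-density scheme. The central estimate is the uniform bound
\[
\BB^{0,T}(\alpha R_\alpha^{0,T}w,\alpha R_\alpha^{0,T}w)\le\EE^{(\alpha),0,T}(w,w)\le C\,\BB^{0,T}(w,w),\qquad w\in L^2(0,T;V),
\]
in which the first inequality is \cite[(6.1.28)]{O3} and the second is the bound, uniform in $\alpha$, coming from the sector condition on the forms $B^{(t)}$; it shows that $\{\alpha R_\alpha^{0,T}\}$ is bounded on $L^2(0,T;V)$, say by $C'$. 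On the range of the resolvent the convergence is elementary: for $w=R_\beta^{0,T}h$ the resolvent identity gives
\[
\alpha R_\alpha^{0,T}R_\beta^{0,T}h=\frac{\alpha}{\alpha-\beta}\big(R_\beta^{0,T}h-R_\alpha^{0,T}h\big),
\]
and since $\|\alpha R_\alpha^{0,T}h\|_{\BB^{0,T}}\le(2\alpha)^{1/2}\|h\|_{L^2(E_{0,T};m_T)}$ (again by \cite[(6.1.28)]{O3} and the contraction property), the second term tends to $0$ and the first to $R_\beta^{0,T}h$ in $L^2(0,T;V)$; hence $\alpha R_\alpha^{0,T}w\to w$. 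As the range of $R_\beta^{0,T}$ is dense in $L^2(0,T;V)$, a $3\varepsilon$-argument using $C'$ extends the convergence to every $w\in L^2(0,T;V)$. Finally the domination $\BB^{0,T}(u_\alpha-u,u_\alpha-u)\le 2((C')^2+1)\,\BB^{0,T}(u,u)$, valid uniformly in $\alpha$ and $P$-integrable because $u\in M^2(0,T;V)$, lets me pass to the expectation.

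For part (iii) I would use the explicit form of $u$. The computation leading to (\ref{eq2.9}) gives, for $z\in E_{0,T}$,
\[
(u-u_\alpha)(z)=E_z\int_0^{T_\iota}e^{-\alpha t}g(\BBX_t)\,d^\dagger\beta^\iota_t.
\]
Fix $t\in[0,T)$ and take $z=(t,x)$, so that $T_\iota=T-t$. Conditional Jensen (in $E_{(t,x)}$), Fubini and the It\^o isometry for the backward integral give
\[
E|(u-u_\alpha)(t,x)|^2\le\BE_{(t,x)}\int_0^{T-t}e^{-2\alpha s}\|g(\BBX_s)\|^2\,ds.
\]
Integrating against $m(dx)$, taking the $\Omega$-expectation inside and using the sub-Markovian property of the transition function with respect to $m$, I arrive at
\[
E\|u_\alpha(t)-u(t)\|^2_{L^2(E;m)}\le\int_t^Te^{-2\alpha(r-t)}\Phi(r)\,dr,\qquad \Phi(r)=\sum_{k=1}^\infty E\|g_k(r,\cdot)\|^2_{L^2(E;m)},
\]
where $\int_0^T\Phi(r)\,dr<\infty$ by (H1). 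Since $e^{-2\alpha(r-t)}\le1$ and $e^{-2\alpha(r-t)}\to0$ for every $r>t$, dominated convergence in $r$ finishes part (iii).

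The genuinely non-routine point is the uniform-in-$\alpha$ energy bound $\EE^{(\alpha),0,T}(w,w)\le C\,\BB^{0,T}(w,w)$ on $L^2(0,T;V)$ used in part (ii): everything else is either classical resolvent theory (part (i) and the density scheme in (ii)) or a direct second-moment computation (part (iii)). This bound, together with the density of the resolvent range in $L^2(0,T;V)$ in the $\|\cdot\|_{\BB^{0,T}}$-norm, is exactly where the sector condition on $\{B^{(t)}\}$ and the generalized-Dirichlet-form estimates of \cite{O3} enter, and it is the step I would expect to require the most care.
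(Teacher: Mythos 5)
Parts (i) and (iii) of your proposal are correct and coincide with the paper's own argument: (i) is strong continuity of the resolvent plus the contraction property plus dominated convergence in $\omega$, and (iii) is the identity (\ref{eq2.9}) combined with It\^o's isometry, sub-invariance of $m$, and dominated convergence in the time variable, exactly as in the paper.

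Part (ii), however, contains a genuine gap. Your ``central estimate''
\[
\EE^{(\alpha),0,T}(w,w)\le C\,\BB^{0,T}(w,w),\qquad w\in L^2(0,T;V),\quad \alpha>0,
\]
is false, and it cannot follow from the sector condition on $\{B^{(t)}\}$. The quantity $\EE^{(\alpha),0,T}(w,w)=\alpha(w-\alpha R^{0,T}_\alpha w,w)_{L^2(E_{0,T};m_T)}$ is the approximation form of the \emph{time-dependent} form $\EE^{0,T}$, so it also encodes energy in the time direction (a killing term at $t=T$ and, roughly, an $H^{1/2}$-type seminorm of $t\mapsto w(t)$ with values in $H$), which the purely spatial form $\BB^{0,T}$ does not see at all. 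A counterexample admissible under the paper's hypotheses: take $B^{(t)}\equiv 0$ (the zero form is a regular Dirichlet form, with $V=H$), so that $\BB^{0,T}\equiv 0$, and take $w(t,x)=\psi(x)$ for some $0\le\psi\in H$, $\psi\neq 0$. The space-time process is then $\BBX_t=(\iota(0)+t,X_0)$, killed at time $T$, whence $\alpha R^{0,T}_\alpha w(s,x)=(1-e^{-\alpha(T-s)})\psi(x)$ and
\[
\EE^{(\alpha),0,T}(w,w)=\alpha\int_0^T\!\!\int_E e^{-\alpha(T-s)}\psi^2(x)\,m(dx)\,ds
=(1-e^{-\alpha T})\|\psi\|^2_{H}\;>\;0=C\,\BB^{0,T}(w,w).
\]
Nor can the estimate be repaired by adding $\|w\|^2_{L^2(E_{0,T};m_T)}$ to the right-hand side: still with $B^{(t)}\equiv0$, for $w=\phi\otimes\psi$ with suitable $\phi\in L^2(0,T)$, $\phi\ge0$, possessing no $H^{1/2}$-type regularity in time, $\EE^{(\alpha),0,T}(w,w)$ is unbounded as $\alpha\to\infty$, while $\BB^{0,T}(w,w)+\|w\|^2_{L^2(E_{0,T};m_T)}$ is finite. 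Since your uniform boundedness of $\{\alpha R^{0,T}_\alpha\}$ on $L^2(0,T;V)$, and with it the $3\varepsilon$-argument, rests entirely on this inequality, your proof of (ii) collapses. (In addition, your density claim --- that the range of $R_\beta^{0,T}$ is dense in $L^2(0,T;V)$ with respect to the $\BB^{0,T}$-norm --- is left unproved and is of essentially the same depth as the statement being proved.)

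The two facts you actually need, namely $\BB^{0,T}(u_\alpha-u,u_\alpha-u)\to0$ for fixed $\omega$ and a domination $\BB^{0,T}(u_\alpha,u_\alpha)\le c\,\BB^{0,T}(u,u)$ with $c$ independent of $\alpha$ and $\omega$, are true, but they are results about the resolvent of the time-dependent form whose proofs do not pass through $\EE^{(\alpha),0,T}$; no control of the approximation form by $\BB^{0,T}$ is available, precisely because elements of $L^2(0,T;V)$ have no regularity in time. This is why the paper does not attempt your route: it quotes these two facts (\cite[Theorem I.2.13]{MR} and \cite[Lemma I.2.11]{MR}, in the version appropriate to the present time-dependent setting) and then concludes, as you do in part (i), by the Lebesgue dominated convergence theorem in $\omega$.
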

\begin{proof}
(i) Since  $\{ R^{0,T}_\alpha\}$ is a strongly continuous
contraction resolvent  on $L^2(E_{0,T};m_T)$, $u_\alpha\rightarrow
u$ in $L^2(E_{0,T};m_T)$  and $\|u_\alpha\|_{L^2(E_{0,T};m_T)}\le
\|u\|_{L^2(E_{0,T};m_T)}$ for $P$-a.e. $\omega\in \Omega$.
Therefore applying the Lebesgue dominated convergence theorem we
get (i).
\\
(ii) By \cite[Theorem I.2.13]{MR},
$\mathcal{B}^{0,T}(u_\alpha-u,u_\alpha-u)\rightarrow 0$ for
$P$-a.e. $\omega\in \Omega$. Moreover,  by \cite[Lemma
I.2.11]{MR}, there exists $c>0$ (independent of $\omega$) such
that $\mathcal{B}^{0,T}(u_\alpha,u_\alpha)\le c
\mathcal{B}^{0,T}(u,u)$ for $P$-a.e. $\omega\in\Omega$. Therefore
(ii) follows  by the Lebesgue dominated theorem.
\\
(iii) By (\ref{eq2.9}),
\[
E\|u_\alpha(t)-u(t)\|^2_{L^2(E;m)}\le
\mathbb{E}_{t,m}\int_0^{T_\iota}e^{-2\alpha r}
\|g(\mathbf{X}_r)\|^2\,dr,
\]
so it suffices to show that the integral on the right-hand side of
the above inequality is finite for every $t\in [0,T)$. But
\[
\mathbb{E}_{t,m}\int_0^{T_\iota}\|g(\mathbf{X}_r)\|^2\,dr
=EE_{t,m}\int_t^T\sum^{\infty}_{k=1}|g_k(r,X_r)|^2\,dr \le
E\|g\|^2_{L^2(E_{0,T};m_T)},
\]
%K: Mamy
%\begin{align*}
%\mathbb{E}_{t,m}\int_0^{T_\iota}\|g(\mathbf{X}_r)\|^2\,dr
%&=E\int_t^T\langle p_{t,r}g^2(r),1\rangle_{L^2(E;m)}\,dr\\
%&= E\int_t^T\langle g^2(r),\hat{p}_{t,r}1\rangle_{L^2(E;m)}\,dr\le
%E\|g\|^2_{L^2(E_{0,T};m_T)},
%\end{align*}
which implies the desired conclusion.
\end{proof}
\medskip

Let us recall that the energy $e(A)$ of  an AF $A$ of the process
$\BBM^{0,T}$ associated with the form $\EE^{0,T}$ is defined as
\[
e(A)=\frac12\lim_{\alpha\rightarrow\infty}\alpha^2
E_{m_T}\int^{T_{\iota}}_0e^{-\alpha t}A^2_t\,dt,\quad
\]
whenever the integral exists (see, e.g., \cite{O1,O3}). Also note
that if $M$ is a martingale AF of $\BBM^{0,T}$ then the sharp
bracket $\langle M\rangle$ of $M$ is a positive continuous AF of
$\BBM^{0,T}$. Let $\mu_{\langle M\rangle}$ denote the Revuz
measure of $\langle M\rangle$. Then
\begin{equation}
\label{eq4.5} \mu_{\langle M\rangle}(E_{0,T})
=\lim_{t\rightarrow0^+}\frac{1}{t}E_{m_T}\langle M\rangle_t=2e(M)
\end{equation}
(see \cite[Section 5.1.2]{O3}).

In what follows we will be interested in AFs of the form
\begin{equation}
\label{eq4.4} A_t=\tilde u(\mathbf{X}_t)-\tilde
u(\mathbf{X}_0),\quad t\in[0,T_{\iota}],
\end{equation}
where $\tilde{u}$ is a quasi-continuous $m_T$-version of
$u\in\WW_T$. Such AFs admit  a unique decomposition (called
Fukushima's decomposition)
\[
A_t=M^{[u]}_t+N^{[u]}_t,\quad t\in[0,T_{\iota}]
\]
into a martingale AF $M^{[u]}$ of $\BBM^{0,T}$ of finite energy
and a continuous AF $N^{[u]}$ of $\BBM^{0,T}$ of zero energy (see
\cite[Theorem 6.4]{O1}).

\begin{lemma}
\label{lm2.0} Let $k^\beta=\beta(1-\beta\hat{R}_\beta^{0,T}1)\cdot
m_T$.  Then there exists a smooth Radon measure $k$ such that
$k^\beta\rightarrow k$ in the vague topology as
$\beta\rightarrow\infty$, and for every $u\in \WW_T$,
\begin{equation}
\label{eq2.10e} e(M^{[u]})= \|\tilde{u}(0)\|^2_{L^2(E;m)}+
\BB^{0,T}(u,u)-\frac12\int_{E_{0,T}}|\tilde{u}(z)|^2\,k(dz),
\end{equation}
where $\tilde{u}$ is a quasi-continuous $m_T$-version of $u$.
\end{lemma}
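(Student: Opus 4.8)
The plan is to reduce everything to a computation of the energy $e(A)$ of the additive functional $A_t=\tilde u(\BBX_t)-\tilde u(\BBX_0)$ of (\ref{eq4.4}). In the Fukushima decomposition $A=M^{[u]}+N^{[u]}$ the functional $N^{[u]}$ has zero energy, and the bilinear energy of a martingale additive functional against a zero-energy one vanishes (apply the Cauchy--Schwarz inequality to the limit defining $e$); hence $e(M^{[u]})=e(A)$, and it suffices to evaluate
\[
e(A)=\tfrac12\lim_{\alpha\to\infty}\alpha^2 E_{m_T}\int_0^{T_\iota}e^{-\alpha t}A_t^2\,dt.
\]
Expanding $A_t^2=\tilde u(\BBX_t)^2-2\tilde u(\BBX_t)\tilde u(\BBX_0)+\tilde u(\BBX_0)^2$, integrating against $e^{-\alpha t}$, and using the Markov property together with the duality between $R_\alpha^{0,T}$ and $\hat R_\alpha^{0,T}$, the three resulting terms combine into
\[
\alpha^2 E_{m_T}\int_0^{T_\iota}e^{-\alpha t}A_t^2\,dt
=2\EE^{(\alpha),0,T}(\tilde u,\tilde u)-\int_{E_{0,T}}\tilde u^2\,dk^\alpha-\int_{E_{0,T}}\tilde u^2\,d\ell^\alpha,
\]
where $\EE^{(\alpha),0,T}$ is the approximating form from the proof of Proposition \ref{stw2.5}, $k^\alpha=\alpha(1-\alpha\hat R_\alpha^{0,T}1)\cdot m_T$ is the family whose vague limit will be $k$, and $\ell^\alpha=\alpha(1-\alpha R_\alpha^{0,T}1)\cdot m_T$ is its analogue built from the forward resolvent. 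This identity is the backbone of the argument.

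Before letting $\alpha\to\infty$ I would settle the first assertion. Since $1-\beta\hat R_\beta^{0,T}1\ge 0$, each $k^\beta$ is a nonnegative measure, and the resolvent identity shows that $\int\varphi\,dk^\beta$ converges as $\beta\to\infty$ for $\varphi$ ranging over a class rich enough to determine vague convergence (for instance $\varphi=\hat R^{0,T}\psi$ with $\psi$ bounded and compactly supported). Hence $\{k^\beta\}$ is vaguely relatively compact with a unique limit $k$, and $k$ charges no $\EE^{0,T}$-exceptional set, so it is smooth; the relevant facts about the capacity and smooth measures are those of \cite{O1,O3}. The same reasoning applies to $\ell^\beta$: its interior vague limit again equals $k$, while the mass it carries near the terminal slice $\{T\}\times E$ is controlled by $\tilde u(T)$ and therefore drops out for $u\in\WW_T$.

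To pass to the limit $\alpha\to\infty$ in the backbone identity I would use that $\EE^{(\alpha),0,T}(u,u)\to\EE^{0,T}(u,u)$ and that, for $u\in\WW_T$, an integration by parts in the time variable gives $\EE^{0,T}(u,u)=\tfrac12\|\tilde u(0)\|^2_{L^2(E;m)}+\BB^{0,T}(u,u)$, because $\langle u,\partial u/\partial t\rangle=\tfrac12(\|u(T)\|^2_{L^2(E;m)}-\|u(0)\|^2_{L^2(E;m)})$ and $u(T)=0$. Simultaneously the two killing integrals converge: the portion of the dual measure $k^\alpha$ concentrating on the initial slice $\{0\}\times E$ contributes the term $\|\tilde u(0)\|^2_{L^2(E;m)}$, while the interior parts converge to integrals against $k$, and collecting all contributions produces the three terms on the right-hand side of (\ref{eq2.10e}). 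All of this I would first carry out for the regularized functions $u_\beta=\beta R_\beta^{0,T}u$, whose quasi-continuous versions and additive functionals are explicit and for which the exchange of limits is transparent, and then pass to the limit $\beta\to\infty$ using Lemma \ref{lm.port} to handle $\BB^{0,T}(u_\beta,u_\beta)$, $\|u_\beta(0)\|^2_{L^2(E;m)}$ and the relevant $L^2(E_{0,T};m_T)$-norms, together with the lower semicontinuity of the energy.

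The hard part is the bookkeeping of the boundary terms combined with the interchange of the two limits. One must separate, in the vague limits of $k^\alpha$ and $\ell^\alpha$, the genuine smooth measure $k$ on the open set $E_{0,T}$ from the mass that accumulates on the time boundaries $\{0\}\times E$ and $\{T\}\times E$; it is precisely this accumulated mass, evaluated against $\tilde u$, that delivers the term $\|\tilde u(0)\|^2_{L^2(E;m)}$, the terminal slice contributing nothing since $u(T)=0$. The non-symmetry of $\EE^{0,T}$ makes this step delicate, since it forces one to work simultaneously with $\BBM^{0,T}$ and its dual and to match the interior parts of the forward and dual killing measures; this matching, and the control of the double limit that lets one pass from $u_\beta$ to $u$, is where the precise constants in (\ref{eq2.10e}) are pinned down.
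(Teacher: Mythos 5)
You follow the same general route as the paper: reduce to the energy of $A_t=\tilde u(\BBX_t)-\tilde u(\BBX_0)$ via Oshima's decomposition, expand $A_t^2$ under $E_{m_T}$, rewrite everything through the approximate form $\EE^{(\alpha),0,T}$ and killing-type measures, and let $\alpha\to\infty$. But the two places where you deviate from the paper are exactly where the argument breaks. First, the backbone identity. Under the paper's conventions ($\tilde u(\Delta)=0$, so $A_t=-\tilde u(\BBX_0)$ after the lifetime, and the energy integral taken up to $T_\iota$), the elementary computation gives
\[
\alpha^2E_{m_T}\int_0^{T_\iota}e^{-\alpha t}A_t^2\,dt
=2\EE^{(\alpha),0,T}(u,u)-\int_{E_{0,T}}\tilde u^2\,dk^\alpha
-\alpha\int_{E_{0,T}}\tilde u^2(s,x)\,e^{-\alpha(T-s)}\,m_T(ds\,dx),
\]
i.e.\ the only term beyond those in the paper's (\ref{eqe.1}) is a pure terminal-slice term, which tends to $\|\tilde u(T)\|^2_{L^2(E;m)}=0$ for $u\in\WW_T$. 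There is no room for your measure $\ell^\alpha=\alpha(1-\alpha R_\alpha^{0,T}1)\cdot m_T$: since $1-\alpha R^{0,T}_\alpha 1(z)=E_z e^{-\alpha\zeta}$ with $\zeta$ the lifetime of the part process, your identity is the one obtained by cutting the energy integral at $\zeta$ instead of $T_\iota$; the two differ by the \emph{interior} forward-killing contribution whenever the forms $B^{(t)}$ have a killing part, which is not a boundary term and does not vanish, and stopping at $\zeta$ also destroys the identification $e(A)=e(M^{[u]})$ you start from, because the killing jump of $M^{[u]}$ contributes to its energy.

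Second, and decisively: insert your own limits into your own identity. You take $2\EE^{(\alpha),0,T}(u,u)\to\|\tilde u(0)\|^2_{L^2(E;m)}+2\BB^{0,T}(u,u)$, you observe (correctly, and more carefully than the paper does) that $\int\tilde u^2\,dk^\alpha\to\|\tilde u(0)\|^2_{L^2(E;m)}+\int_{E_{0,T}}\tilde u^2\,dk$ because the initial-slice mass of $k^\alpha$ is seen by $\tilde u^2$, and you claim $\int\tilde u^2\,d\ell^\alpha\to\int_{E_{0,T}}\tilde u^2\,dk$. Collecting these three limits yields
\[
2e(M^{[u]})=2\BB^{0,T}(u,u)-2\int_{E_{0,T}}\tilde u^2\,dk,
\]
that is, the $\|\tilde u(0)\|^2$ contributions cancel and the killing integral comes out with coefficient $1$ rather than $\tfrac12$. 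This is not the right-hand side of (\ref{eq2.10e}); so the sentence ``collecting all contributions produces the three terms on the right-hand side of (\ref{eq2.10e})'' is not a deferred detail but is false for the scheme you set up, the double counting coming precisely from the spurious $\ell^\alpha$. By contrast, the paper never introduces a forward-resolvent measure: it works with $k^\beta$ alone, gets vague tightness from $(u^2,k^\beta)\le2\EE^{(\beta),0,T}(u,u)$ (a consequence of (\ref{eqe.1})), obtains convergence of $\int u^2\,dk^\beta$ for all $u\in\WW_T$ by the approximation inequality (\ref{eqe.4}), and then passes to the limit in (\ref{eqe.1}). Finally, a step your sketch omits altogether: smoothness of $k$ is \emph{proved} in the paper from the bound (\ref{eq4.6}) together with Pierre's theorem \cite{Pierre}; your assertion that ``$k$ charges no $\EE^{0,T}$-exceptional set, so it is smooth'' is exactly the claim that needs an argument, not a fact one can quote.
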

\begin{proof}
Let $A$ be defined by (\ref{eq4.4}). By an elementary computation
we get
%and \cite[6.1.28]{O3} (see also \cite[Lemma 6.1]{O31992}) we get
\begin{align*}
\beta^2 E_{m_T}\int_0^{T_\iota}e^{-\beta t}(u(\mathbf{X}_t)
-u(\mathbf{X}_0))^2\,dt=2\beta (u,u-\beta R^{0,T}_\beta u)
-\beta(u^2,1-\beta\hat{R}_\beta^{0,T}1).
\end{align*}
Hence
\begin{equation}
\label{eqe.1} \beta^2 E_{m_T}\int_0^{T_\iota}e^{-\beta
t}A_t^2\,dt=2\EE^{(\beta),0,T}(u,u)-(u^2,k^\beta).
\end{equation}
By (\ref{eqe.1}),
\[
(u^2,k^\beta)\le 2\EE^{(\beta),0,T}(u,u).
\]
From this we conclude that the sequence $\{k^\beta\}$ is tight in
the vague topology. Therefore if  $u\in\WW_T\cap C_c(E_{T})$ then
letting $\beta\rightarrow\infty$ in the above inequality we get
\begin{equation}
\label{eqe.2} (u^2,k)\le 2\EE^{0,T}(u,u) =\|u(0)\|^2_{L^2(E;m)}
+2\BB^{0,T}(u,u).
\end{equation}
Since it is known that there is a continuous embedding of $\WW_T$
into $C([0,T];H)$, from (\ref{eqe.2}) it follows that there is
$c>0$ such that
\begin{equation}
\label{eq4.6} (u^2,k)\le c\|u\|_{\WW_T}
\end{equation}
for all $u\in \WW_T\cap C_c(E_{T})$. From (\ref{eq4.6}) and
\cite[Theorem 1]{Pierre} it follows that $k$ is a smooth measure.
Furthermore, since each quasi-continuous $u\in \WW_T$ can be
approximated q.e. and in $\WW_T$ by functions from the space
$\WW_T\cap C_c(E_{T})$,  (\ref{eqe.2}) holds true for every
quasi-continuous $u\in \WW_T$. Let $\{u_n\}\subset \WW_T\cap
C_c(E_{T})$ be such that $u_n\rightarrow u$ in $\WW_T$. Then
\begin{align}
\label{eqe.4}  |(u^2,k^{\beta})^{1/2}-(u^2,k)^{1/2}|&\le
|(u^2_n,k^{\beta})^{1/2}-(u^2_n,k)^{1/2}|+\|(u_n-u)(0)\|_{L^2}\\
&\quad+(2\EE^{(\beta),0,T}(u_n-u,u_n-u))^{1/2}\nonumber\\
&\quad+(2\BB^{0,T}(u_n-u,u_n-u))^{1/2}. \nonumber
\end{align}
Since $\WW_T\subset C(0,T;L^2(E;m))$ and the embedding is
continuous, letting $\beta\rightarrow\infty$ and then
$n\rightarrow\infty$ in (\ref{eqe.4}) shows that for every $u\in
\WW_T$,
\[
\int_{E_{0,T}} u^2\,dk^{\beta}\rightarrow \int_{E_{0,T}} u^2\,dk
\]
as $\beta\rightarrow\infty$. By \cite[Theorem 6.4]{O1},
$e(A)=e(M^{[u]})$. Therefore letting $\beta\rightarrow \infty$ in
(\ref{eqe.1}) yields $2e(M^{[u]})=2\EE^{0,T}(u,u)-(u^2,k)$, which
implies (\ref{eq2.10e}).
\end{proof}

\begin{definition}
We say that an $(\FF_t)$-adapted process $A$ is a random additive
functional (random AF for short) of $\BBM^{0,T}$ if for $P$-a.e.
$\omega\in \Omega$ the process $A(\omega;\cdot)$ is an AF of
$\BBM^{0,T}$. Similarly, we say that a process $A$ is a random
martingale (continuous, positive) AF of $\BBM^{0,T}$ if  for
$P$-a.e. $\omega\in \Omega$ the process $A(\omega;\cdot)$ is a
martingale (continuous, positive) AF of $\BBM^{0,T}$.
\end{definition}

\begin{lemma}
\label{lm.200}
Let $\{A^n\}$ be a sequence of random AFs such that
\[
\BE_z\sup_{t\le T_\iota}|A^n_t-A^m_t|\rightarrow0\quad\mbox{for
}m_T\mbox{-a.e. }z\in E_{0,T}\mbox{ as }n,m\rightarrow \infty.
\]
Then there exists a subsequence $(n_k)\subset (n)$ such that for
$P$-a.s. $\omega\in \Omega$,
\[
E_z\sup_{t\le T_\iota}|A^{n_k}_t-A^{n_l}_t|\rightarrow
0\quad\mbox{for }\mbox{q.e. }z\in E_{0,T}\mbox{ as }k,l
\rightarrow \infty.
\]
\end{lemma}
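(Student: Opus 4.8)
The plan is to transfer the convergence from the product expectation $\BE_z=E\,E_z$, valid for $m_T$-a.e. $z$, to the slice expectation $E_z$, valid for $P$-a.e. $\omega$ and q.e. $z$, by reducing the claim to the statement that a suitable \emph{excessive} function of $\BBM^{0,T}$ which is finite $m_T$-a.e. is in fact finite q.e. Throughout write $s_{n,m}(\omega,z)=E_z\sup_{t\le T_\iota}|A^n_t-A^m_t|$, so that the hypothesis reads $\BE_z\sup_{t\le T_\iota}|A^n_t-A^m_t|=E[s_{n,m}(\cdot,z)]\to0$ for $m_T$-a.e.\ $z$, and the conclusion is that along a subsequence $s_{n_k,n_l}(\omega,\cdot)\to0$ q.e.\ for $P$-a.e.\ $\omega$.

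First I fix a finite measure $\nu$ equivalent to $m_T$, e.g.\ $\nu=\psi\cdot m_T$ with $\psi$ strictly positive as in Remark \ref{rem3.1}, so that $\nu$ and $m_T$ have the same null sets. Since $E[s_{n,m}(\cdot,z)]\to0$ for $\nu$-a.e.\ $z$ and $\nu(E_{0,T})<\infty$, bounded convergence gives $\int_{E_{0,T}}\min(E[s_{n,m}(\cdot,z)],1)\,\nu(dz)\to0$ as $n,m\to\infty$. Extracting diagonally an increasing sequence $(n_k)$ for which this integral is $\le2^{-k}$ for the consecutive pair $(n_k,n_{k+1})$, a Beppo Levi argument yields $\sum_k E[s_{n_k,n_{k+1}}(\cdot,z)]<\infty$ for $m_T$-a.e.\ $z$; hence, by Tonelli, for $P$-a.e.\ $\omega$ the function $S(\omega,z):=\sum_k s_{n_k,n_{k+1}}(\omega,z)$ is finite for $m_T$-a.e.\ $z$.

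The heart of the proof is to show that, for fixed $\omega$, each summand is dominated by an excessive function of $\BBM^{0,T}$. I would replace the supremum by the total oscillation $O(A):=\sup_{0\le u\le v\le T_\iota}|A_v-A_u|$, which satisfies $\sup_t|A_t|\le O(A)\le2\sup_t|A_t|$ because $A_0=0$. Using the additive-functional identity $A_r\circ\theta_s=A_{r+s}-A_s$ together with $\iota(0)\circ\theta_s=\iota(0)+s$ (so that $T_\iota\circ\theta_s=T_\iota-s$) one computes
\[
O(A)\circ\theta_s=\sup_{s\le u\le v\le T_\iota}|A_v-A_u|\le O(A),
\]
with the two sides agreeing in the limit $s\downarrow0$ by right-continuity of the paths. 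Consequently $z\mapsto E_z\,O(A^{n_k}-A^{n_{k+1}})$ is excessive for $\BBM^{0,T}$, and so is the increasing sum $\tilde S(\omega,\cdot):=\sum_k E_z\,O(A^{n_k}-A^{n_{k+1}})$. Since $\tilde S\le2S$, the previous step gives $\tilde S(\omega,\cdot)<\infty$ $m_T$-a.e.\ for $P$-a.e.\ $\omega$, and an excessive function finite $m_T$-a.e.\ is finite q.e.\ (its infinity set is finely closed and $m_T$-null, hence exceptional; see \cite{O3}), so $\tilde S(\omega,\cdot)<\infty$ q.e. As $s_{n_k,n_{k+1}}\le E_z\,O(A^{n_k}-A^{n_{k+1}})$, we obtain $\sum_k s_{n_k,n_{k+1}}(\omega,z)<\infty$ q.e. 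Finally, for $k\le l$ the triangle inequality gives $E_z\sup_t|A^{n_k}_t-A^{n_l}_t|\le\sum_{j\ge k}s_{n_j,n_{j+1}}(\omega,z)$, a tail of a convergent series, which therefore tends to $0$ as $k,l\to\infty$ for q.e.\ $z$; this is precisely the assertion.

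The main obstacle is the excessivity step: the functional $\sup_t|A_t|$ is \emph{not} decreasing under the shift once recentred at a later time, so one must introduce the total oscillation $O(A)$ (which is monotone under $\theta_s$) and, crucially, let the finite horizon $T_\iota=T-\iota(0)$ shrink consistently to $T_\iota-s$ under $\theta_s$. Granting this, the transfer from ``finite $m_T$-a.e.'' to ``finite q.e.'' is the standard potential-theoretic property of excessive functions, and both summability reductions (over $k$ and the diagonal extraction) are routine.
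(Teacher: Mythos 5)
Your proof is correct in substance but follows a genuinely different route from the paper's. The paper, after essentially the same initial step (a subsequence whose consecutive increments are $\BE_\nu$-summable for a suitable $\nu=\rho\cdot m_T$), works with the hitting time $\sigma_B$ of the bad set $B=\{z:\,E_z\sup_{t\le T_\iota}|A^{n_k}_t-A^{n_l}_t|\nrightarrow 0\}$: using additivity and the Markov property it dominates the quantity along the path by the conditional expectations $E_z(\sup_{t\le T_\iota}|A^{n_k}_t-A^{n_l}_t|\,|\,\FF^\BBX_s)$, then applies the $L^q$ ($q<1$) maximal inequality of \cite[Lemma 6.1]{BDHPS} and a Borel--Cantelli argument under $\Pi=P\otimes(\nu\otimes K)$ to get $\BP_\nu(\sigma_B\le T_\iota)=0$, hence $\mbox{cap}(B)=0$ for $P$-a.e. $\omega$. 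You instead upgrade $\sup_t|A_t|$ to the total oscillation $O(A)$, note that additivity gives $O(A)\circ\theta_s\le O(A)$ with equality in the limit $s\downarrow 0$, so that $z\mapsto E_zO(A)$ is excessive for $\BBM^{0,T}$, and then transfer ``finite $m_T$-a.e.'' to ``finite q.e.'' by the classical property of excessive functions, concluding with a tail estimate for the summable series. Both arguments exploit the same two structural facts (additivity of AFs plus the Markov property); yours buys a conceptually cleaner reduction to classical potential theory, while the paper's stays more elementary and self-contained, needing only a martingale inequality rather than excessive-function and fine-topology theory in the time-dependent Dirichlet form setting.

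One caveat: your parenthetical justification of the transfer step --- ``the infinity set is finely closed and $m_T$-null, hence exceptional'' --- is not a valid general principle: for one-dimensional Brownian motion a single point is closed and Lebesgue-null, yet it is hit a.s., so fine closedness plus nullity does not imply exceptionality. The fact you invoke is nevertheless true and standard, but the correct reason is the supermartingale argument: if $h$ is excessive (and nearly Borel) with $h(z)<\infty$, then under $P_z$ the process $h(\BBX_t)$ is a right-continuous nonnegative supermartingale, hence a.s. finite for all $t$, so the process starting from $z$ a.s. never hits $\{h=\infty\}$; since $h<\infty$ $m_T$-a.e., this gives $P_{m_T}(\sigma_{\{h=\infty\}}<\infty)=0$, which is precisely the hitting-probability characterization of exceptional sets that the paper itself uses. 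With that repair your argument is complete.
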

\begin{proof}
Let $\rho\in L^1(E_{0,T};m_T)$ be a strictly positive function
such that $\int_{E_{0,T}}\rho\,dm_T=1$ and
\[
\BE_\nu\sup_{t\le T_\iota}|A^n_t-A^m_t|\rightarrow 0\quad\mbox{as
}n,m\rightarrow \infty,
\]
where $\nu=\rho\cdot m_T$. Let $(n_k)\subset (n)$ be a
subsequence such that
\begin{equation}
\label{eq.ser} \BE_\nu\sup_{t\le T_\iota}
|A^{n_{k+1}}_t-A^{n_k}_t|\le 2^{-k},\quad k\ge1,
\end{equation}
and let $B=\{z\in E_{0,T}:E_z\sup_{t\le
T_\iota}|A^{n_k}_t-A^{n_l}_t|\nrightarrow 0\mbox{ as }
k,l\rightarrow \infty\}$.  Let us stress that $B$ depends on
$\omega$. Write $\tau=\sigma_B$. By the Markov property and
additivity of $A$,
\begin{align*}
P_\nu(\tau<T_\iota)&\le \int_{E_{0,T}}P_z\big(\exists_{s\le
T_\iota} E_z(\sup_{t\le T_\iota\circ\theta_s}|A^{n_l}_t\circ
\theta_s-A^{n_k}_t\circ \theta_s||\FF_s^\BBX)\nrightarrow
0\big)\,d\nu(z)\\&=\int_{E_{0,T}}P_z \big(\exists_{s\le T_\iota}
E_z(\sup_{t+s\le T_\iota}|A^{n_l}_{t+s}-A^{n_l}_s
-A^{n_k}_{t+s}+A^{n_k}_s||\FF_s^\BBX)\nrightarrow
0\big)\,d\nu(z)\\
&\le \int_{E_{0,T}}P_z \big(\exists_{s\le T_\iota} 2E_z(\sup_{t\le
T_\iota}|A^{n_l}_{t}-A^{n_k}_t||\FF_s^\BBX)\nrightarrow
0\big)\,d\nu(z) \\
&\le \int_{E_{0,T}}P_z \big(\sup_{0\le s\le T_\iota}
(E_z(\sup_{t\le T_\iota}|A^{n_l}_{t}-A^{n_k}_t|
|\FF_s^\BBX))^q\nrightarrow 0\big)\,d\nu(z)
\end{align*}
for every $q\in(0,1)$. Let $\Pi=P\otimes(\nu\otimes K)$, where
$K(z,d\omega')=P_z(d\omega')$. By (\ref{eq.ser}) and \cite[Lemma
6.1]{BDHPS},
\begin{align*}
&\sum_{k=1}^{\infty}\int_{\Omega\times E_{0,T}\times\Omega'}
\sup_{s\le T_\iota}(E_z(\sup_{t\le T_\iota}
|A^{n_{k+1}}_{t}-A^{n_k}_t||\FF_s^\BBX))^q
\,d\Pi(\omega,z,\omega')\\&\qquad\le \sum_{k=1}^{\infty}
\frac{1}{1-q}(\BE_\nu\sup_{t\le T_\iota}
|A^{n_{k+1}}_t-A^{n_k}_t|)^q<\infty.
\end{align*}
Since $\Pi(\Omega\times E_{0,T}\times\Omega')=1$, applying the
Borel-Cantelli lemma shows that
\[
\sup_{s\le T_\iota}(E_\cdot(\sup_{t\le
T_\iota}|A^{n_{k}}_{t}-A^{n_l}_t||\FF^{\BBX}_s))^q\rightarrow
0\quad\mbox{as }k,l\rightarrow \infty,\quad \Pi\mbox{-a.e.}
\]
In particular, for $\nu$-a.e. $z\in E_{0,T}$ and $P$-a.e.
$\omega\in\Omega$,
\[
\sup_{s\le T_\iota}(E_z(\sup_{t\le T_\iota}
|A^{n_{k}}_{t}-A^{n_l}_t||\FF^{\BBX}_s))^q\rightarrow
0\quad\mbox{as } k,l\rightarrow \infty,\quad P_z\mbox{-a.e.}
\]
Hence $\BP_\nu(\tau\le T_\iota)=0$, which implies that cap$(B)=0$
$P$-a.e. (see \cite[p. 298]{O2}).
\end{proof}

For a given set $A\subset E_{0,T}\times\Omega$ and
$(z_0,\omega_0)$  we write $A_{z_0}=\{\omega\in\Omega;
(z_0,\omega)\in A\}$ and $A_{\omega_0}=\{z\in E_{0,T};
(z,\omega_0)\in A\}$.

\begin{lemma}
\label{lm.201} Let $A\subset E_{0,T}\times\Omega$ be a measurable
set. If \mbox{\rm cap}$(A_\omega)=0$ for $P$-a.e.
$\omega\in\Omega$, then $ P(A_z)=0$ for \mbox{\rm cap}-q.e. $z\in
E_{0,T}$.
\end{lemma}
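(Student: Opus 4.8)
The plan is to reduce the statement to the probabilistic description of the capacity and then apply Fubini's theorem on $\Omega\times\Omega'$. Set $h(z)=P(A_z)$ and $D=\{z\in E_{0,T}:h(z)>0\}$. Since $A$ is measurable, $h$ is Borel by Fubini's theorem, so $D$ is a nearly Borel subset of $E_{0,T}$, and the conclusion ``$P(A_z)=0$ for q.e.\ $z$'' is precisely the assertion $\mathrm{cap}(D)=0$. Recalling the characterization of exceptional sets already used in the proof of Lemma~\ref{lm.200} (see \cite[p.~298]{O2}), a nearly Borel set $B\subset E_{0,T}$ satisfies $\mathrm{cap}(B)=0$ if and only if $P_{m_T}(\sigma_B\le T_\iota)=0$, where $\sigma_B=\inf\{t>0:\BBX_t\in B\}$ and $P_{m_T}=\int_{E_{0,T}}P_z\,m_T(dz)$. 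Thus it suffices to prove that $P_{m_T}$-a.s.\ the process $\BBX$ never visits $D$ before time $T_\iota$.

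On the product space $(\Omega\times\Omega',\BP_{m_T})$, with $\BP_{m_T}=P\otimes P_{m_T}$, I would consider the event
\[
\Gamma=\{(\omega,\omega'):\exists\,t\in(0,T_\iota],\ (\BBX_t(\omega'),\omega)\in A\}
=\{(\omega,\omega'):\sigma_{A_\omega}(\omega')\le T_\iota\}.
\]
For $P$-a.e.\ $\omega$ we have $\mathrm{cap}(A_\omega)=0$ by hypothesis (note that each section $A_\omega$ is Borel, hence nearly Borel), so the characterization above gives $P_{m_T}(\sigma_{A_\omega}\le T_\iota)=0$. Integrating the $\omega$-sections $\Gamma_\omega=\{\omega':\sigma_{A_\omega}(\omega')\le T_\iota\}$ and using that the exceptional $\omega$'s form a $P$-null set, Fubini's theorem yields
\[
\BP_{m_T}(\Gamma)=\int_\Omega P_{m_T}(\sigma_{A_\omega}\le T_\iota)\,dP(\omega)=0.
\]
The one point that needs care here is the measurability of $\Gamma$: because of the quantifier over $t$, it arises as a projection of the jointly measurable set $\{(\omega,\omega',t):(\BBX_t(\omega'),\omega)\in A\}$, so a priori $\Gamma$ is only universally measurable (measurable projection/debut theorem), which is still enough to integrate its sections and to apply Fubini.

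Finally I would extract from $\BP_{m_T}(\Gamma)=0$ the required single-time information. By Fubini (integrating in $\omega$ first) there is a $P_{m_T}$-null set off which the section $\Gamma_{\omega'}=\{\omega:\sigma_{A_\omega}(\omega')\le T_\iota\}$ satisfies $P(\Gamma_{\omega'})=0$. Fix such an $\omega'$ and any $t\in(0,T_\iota]$; since $\{\omega:(\BBX_t(\omega'),\omega)\in A\}\subset\Gamma_{\omega'}$ and the left-hand side equals $A_{\BBX_t(\omega')}$, we obtain $P(A_{\BBX_t(\omega')})=0$, i.e.\ $h(\BBX_t(\omega'))=0$, so $\BBX_t(\omega')\notin D$. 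As this holds for \emph{every} $t\in(0,T_\iota]$ for the fixed good $\omega'$, the trajectory avoids $D$ up to $T_\iota$, whence $\sigma_D(\omega')>T_\iota$. Therefore $P_{m_T}(\sigma_D\le T_\iota)=0$, and the characterization gives $\mathrm{cap}(D)=0$, which is the claim. The crux is exactly this last step: the event $\Gamma$ encodes that the process \emph{never} enters $A_\omega$, and it is this ``never'' (rather than an occupation-time/$m_T$-a.e.\ statement, which would only yield an $m_T$-null conclusion) that upgrades the fixed-time fact $\BBX_t\notin D$ into the hitting-time statement $\sigma_D>T_\iota$, and hence into a capacity assertion.
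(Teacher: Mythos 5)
Your proof is correct, but it takes a genuinely different route from the paper's. The paper's argument is a three-line duality argument: for an arbitrary smooth bounded measure $\nu$ on $E_{0,T}$, the hypothesis gives $\nu(A_\omega)=0$ for $P$-a.e.\ $\omega$ (smooth measures charge no capacity-zero set), hence $E\int_{E_{0,T}}\mathbf{1}_A(\omega,z)\,\nu(dz)=0$; ordinary Fubini (the integrand $\mathbf{1}_A$ is jointly measurable, so no completion issues arise) then yields $\int_{E_{0,T}}P(A_z)\,\nu(dz)=0$, and since $\nu$ was arbitrary, the converse half of the smooth-measure characterization of exceptional sets (\cite[Theorem 2.5]{Trutnau}, or the remark at the end of Section 4 in \cite{O2}) gives $P(A_z)=0$ for q.e.\ $z$. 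You instead work with the polarity characterization, $\mathrm{cap}(B)=0$ iff the process started from ($m_T$-a.e.) point never hits $B$, used in \emph{both} directions, and you transport the null hitting event across the product space by Fubini. This is legitimate --- it is exactly the characterization the paper itself invokes in Section 3.1 and in the proof of Lemma \ref{lm.200} (citing \cite[p.~298]{O2}) --- and your closing observation is the right one: it is the ``never hits'' event, not an occupation-time statement, that upgrades the conclusion from $m_T$-a.e.\ to q.e. The trade-off is that your route forces you to handle the hitting event $\Gamma$, which is only a projection of a jointly measurable set, so you need the measurable projection/debut theorem, universal measurability, and Fubini for completed measures; you deal with this correctly, but the paper's choice of test objects (smooth measures instead of hitting times) makes all of these technicalities evaporate, since $\mathbf{1}_A$ itself is the integrand and classical Fubini suffices. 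In short: same skeleton (Fubini on $\Omega\times$ state space plus a characterization of capacity-zero sets), but dual choices of the characterization --- measure-theoretic in the paper, probabilistic in your proposal --- with the paper's version being the more economical of the two.
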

\begin{proof}
Let $\nu$ be a smooth bounded measure. Then
$E\int_{E_{0,T}}\mathbf{1}_A(\omega,z)\,\nu(dz)=0$ by the
assumption  on $A$. Therefore using Fubini's theorem we obtain
\[
0=\int_{E_{0,T}}E\mathbf{1}_A(\omega,z)\,\nu(dz)
=\int_{E_{0,T}}P(A_z)\,\nu(dz).
\]
Since $\nu$ was arbitrary, it follows from \cite[Theorem
2.5]{Trutnau} (or remark at the end of Section 4 in \cite{O2})
that $P(A_z)=0$  for cap-q.e. $z\in E_{0,T}$.
\end{proof}

\begin{proposition}
\label{lm4.7} Assume that $u$ is given by \mbox{\rm(\ref{eq2.7})}.
Then $u(\BBX_t)=Y_t$, $t\in[0,T_\iota]$ for q.e. $z\in E_{0,T}$,
where $(Y,M)$ is a solution of $\mbox{\rm{BDSDE}}_z(\varphi,f,g)$.
Moreover,  $M$ is a random martingale AF of $\BBM^{0,T}$ and
\begin{equation}
\label{eq2.10} Ee(M)= E\Big(\|u(0)\|^2_{L^2(E;m)}
+\BB^{0,T}(u,u)-\frac12\int_{E_{0,T}}|u(z)|^2\,k(dz)\Big),
\end{equation}
where $k$ is the killing measure of the form
$(\hat{\EE}^{0,T},D(\hat{\EE}^{0,T}))$ defined in Lemma
\ref{lm2.0}.
\end{proposition}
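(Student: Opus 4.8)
The plan is to read the representation $u(\BBX)=Y$ and the additive-functional decomposition off the results already proved, to identify $M$ as a random martingale AF, and then to establish the energy identity (\ref{eq2.10}) by freezing the nonlinearity and reducing to the linear estimate of Lemma \ref{lm2.0}, with the resolvent approximation of Lemma \ref{lm.port} as the main analytic tool.

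The equality $u(\BBX_t)=Y_t$, $t\in[0,T_\iota]$, $\BP_z$-a.s.\ for q.e.\ $z$, is precisely (\ref{eq2.3}) of Theorem \ref{tw3.4}, and by Remark \ref{rem3.8} the pair $(u(\BBX),M)$ solves the BDSDE (\ref{eq3.14}); rewriting it in the form (\ref{eq3.15}) gives $A^{[u]}_t:=u(\BBX_t)-u(\BBX_0)=M_t+N_t$ with $N$ as in (\ref{eq1.06}). To see that $M$ is a random martingale AF I would check that $A^{[u]}$ and $N$ are random AFs and that $M=A^{[u]}-N$ inherits the martingale property. For $P$-a.e.\ $\omega$ the function $u(\omega;\cdot)$ is quasi-continuous (Theorem \ref{tw3.4}), so $A^{[u]}(\omega;\cdot)$ is an AF of $\BBM^{0,T}$; the only subtlety, that the exceptional set may depend on $\omega$, is handled by Lemma \ref{lm.201}. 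The absolutely continuous part of $N$ is a continuous AF for each fixed $\omega$, and additivity of the stochastic-integral part is the shift identity (\ref{eq2.42}) established in Proposition \ref{stw3.41}; hence $N$, and therefore $M$, is a random AF. Finally $M\in\MM^2$, so by the Remark following the definition of $\MM^2$ the process $M(\omega;\cdot)$ is a square-integrable $(\FF^{\BBX}_t)$-martingale for $P$-a.e.\ $\omega$, which makes $M$ a random martingale AF.

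For (\ref{eq2.10}) I would freeze the coefficients: by Remark \ref{rem3.8}(ii), $(u(\BBX),M)$ also solves the \emph{linear} $\mathrm{BDSDE}_z(\varphi,f_u,g_u)$ with $f_u(z)=f(z,u(z))$, $g_u(z)=g(z,u(z))$, so $u$ is given by (\ref{eq2.71}) with $f,g$ replaced by $f_u,g_u$. I then split $u=w+v$, where $w(z)=E_z(\varphi(\BBX_{T_\iota})+\int_0^{T_\iota}f_u(\BBX_t)\,dt)$ and $v(z)=E_z\int_0^{T_\iota}g_u(\BBX_t)\,d^\dagger\beta^\iota_t$. When $f$ is Lipschitz in $y$ the frozen source $f_u$ is square-integrable, so for $P$-a.e.\ $\omega$ the function $w(\omega;\cdot)$ solves the linear Cauchy problem with $L^2$ data and belongs to $\WW_T$; Lemma \ref{lm2.0} then gives $e(M^{[w]})=\|w(0)\|^2_{L^2(E;m)}+\BB^{0,T}(w,w)-\frac12\int_{E_{0,T}}|w(z)|^2\,k(dz)$ for the Fukushima martingale part of $A^{[w]}$. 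Since martingale parts add in $A^{[u]}=A^{[w]}+A^{[v]}$, we have $M=M^{[w]}+\tilde M$, where $\tilde M$ is the martingale AF coming from the $v$-part of the BDSDE.

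The crux, and the step I expect to be the main obstacle, is the energy accounting for $v$. In general $v(\omega;\cdot)\notin\WW_T$ (the phenomenon behind (\ref{eq1.10})), so Lemma \ref{lm2.0} is not applicable to $v$ or $u$ directly, and $\tilde M$ is \emph{not} the Fukushima martingale part of $A^{[v]}$, since its companion $-\int_0^\cdot g_u(\BBX_r)\,d^\dagger\beta^\iota_r$ carries the nonzero self-energy $\frac12 E\int_{E_{0,T}}\|g_u\|^2\,dm_T$. My plan is to compute the relevant quadratic quantities from the definition of energy as $2e(\cdot)=\lim_\alpha\alpha^2 E_{m_T}\int_0^{T_\iota}e^{-\alpha t}(\cdot)_t^2\,dt$ and from the identity (\ref{eqe.1}), using the explicit resolvent formula (\ref{eq2.9}) for $v-\alpha R^{0,T}_\alpha v$ together with It\^o's isometry, exactly as in the derivation of (\ref{eq4.3}), and to justify passing to the limit through the convergences of Lemma \ref{lm.port} applied to $v_\alpha=\alpha R^{0,T}_\alpha v\in\WW_T$. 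After taking the expectation $E$ over the Brownian paths, the task is to show that the self-energy of the backward-It\^o part cancels against the cross terms produced by $\tilde M$, so that $Ee(\tilde M)=E(\|v(0)\|^2_{L^2(E;m)}+\BB^{0,T}(v,v)-\frac12\int_{E_{0,T}}|v|^2\,k(dz))$ and $Ee(M^{[w]},\tilde M)$ equals the corresponding bilinear expression; by polarization these assemble with the $w$-term into the right-hand side of (\ref{eq2.10}). Finally, since for merely monotone $f$ the source $f_u$ need not be square-integrable, I would extend the identity from the Lipschitz case to general $f$ through the monotone approximation of Theorem \ref{tw1.1}, passing to the limit in (\ref{eq2.10}). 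The delicate points throughout are the identification of these limits without the regularity $v\in\WW_T$ and the interchange of the $\alpha$-limit with the expectation $E$.
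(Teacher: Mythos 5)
Your reading of the first assertion is fine (it is indeed (\ref{eq2.3}) of Theorem \ref{tw3.4} plus the frozen-coefficient observation of Remark \ref{rem3.8}), but the two remaining claims are where your proposal has genuine gaps. For the random martingale AF property, your argument does not work as stated. From $M\in\MM^2$ under $\BP_z=P\otimes P_z$, the Remark following the definition of $\MM^2$ gives, \emph{for each fixed} $z\in E_{0,T}\setminus N$, a $P$-null set $\Omega_z$ (depending on $z$) outside of which $M(\omega;\cdot)$ is a square-integrable $P_z$-martingale. A random martingale AF requires a single $P$-null set outside of which $M(\omega;\cdot)$ is a martingale AF under $P_z$ for q.e.\ $z$; that is, you must pass from ``for q.e.\ $z$, for $P$-a.e.\ $\omega$'' to ``for $P$-a.e.\ $\omega$, for q.e.\ $z$''. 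Lemma \ref{lm.201} goes in the opposite direction (capacity-null $\omega$-sections imply $P$-null $z$-sections) and cannot perform this swap; there is no Fubini theorem for capacity in the direction you need. This is precisely why the paper does not argue directly: it constructs a version $\tilde M$ as a limit of the honest, pathwise-in-$\omega$ martingale AFs $M^{[u_\alpha]}$ coming from the Fukushima decomposition of $u_\alpha=\alpha R^{0,T}_\alpha u$, where the quantifier swap is achieved by the Borel--Cantelli/hitting-time argument of Lemma \ref{lm.200}, and only afterwards identifies $\tilde M$ with $M$ via Lemma \ref{lm.201}. The same problem undermines your claim that $N$ is a random AF: the backward integral $\int_0^\cdot g(\BBX_r,u(\BBX_r))\,d^\dagger\beta^\iota_r$ is defined only $\BP_z$-a.s.\ for each fixed $z$, and the shift identity (\ref{eq2.42}) also holds only $\BP_z$-a.s., so the existence of a version of $N$ that is an AF for $P$-a.e.\ $\omega$ is itself part of what must be proved.

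For the energy identity you name the right tools ((\ref{eq2.9}), It\^o's isometry, Lemmas \ref{lm.port} and \ref{lm2.0} applied to resolvent approximations), but the step you yourself call the crux --- that the self-energy of the backward integral cancels the cross terms so that $Ee(\tilde M)$ and the mixed energies take the stated values --- is exactly the content of (\ref{eq2.10}) and is nowhere carried out; ``the task is to show'' is a restatement of the proposition, not a proof. The paper supplies the missing mechanism in one stroke: writing $u_\alpha(\BBX_t)=\int_0^t\alpha(u-u_\alpha)(\BBX_r)\,dr+M^{[u_\alpha]}_t$ and applying It\^o's formula against the BDSDE satisfied by $u(\BBX)$ gives
\[
\BE_z\int_0^{T_\iota}d[M-M^{[u_\alpha]}]_t\le
\BE_z\int_0^{T_\iota}\bigl(-2\alpha|u-u_\alpha|^2(\BBX_t)+\|g(\BBX_t)\|^2\bigr)\,dt,
\]
while It\^o's isometry and (\ref{eq2.9}) yield $2\alpha\BE_z\int_0^{T_\iota}|u-u_\alpha|^2(\BBX_t)\,dt \rightarrow\BE_z\int_0^{T_\iota}\|g(\BBX_t)\|^2\,dt$, so the right-hand side vanishes in the limit. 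This single cancellation simultaneously gives the martingale AF identification (through Lemma \ref{lm.200}) and $Ee(M^{[u_\alpha]})\rightarrow Ee(M)$, after which (\ref{eq2.10}) follows from Lemma \ref{lm2.0} applied to $u_\alpha$ together with the convergence (\ref{eq4.16}); without it, your polarization scheme has nothing to polarize. Finally, your closing reduction from monotone to Lipschitz $f$ is both unnecessary --- in (\ref{eq2.7}) the coefficients are already frozen along $Y$, so no regularity of $f$ in $y$ ever enters, and the paper disposes of the $g\equiv0$ part by citation rather than by Lipschitz approximation --- and risky: passing to the limit in (\ref{eq2.10}) along approximating solutions $u_n$ would require energy bounds of the type of Theorem \ref{th4.4}, which in the paper is deduced \emph{from} the present proposition, so this route courts circularity.
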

\begin{proof}
The first assertion  follows from Theorem \ref{tw3.4}. If $g\equiv
0$ then (\ref{eq2.10}) follows from \cite[Lemma 6.1]{O1}.
Therefore we may and will assume that $\varphi\equiv0$,
$f\equiv0$. Let $u_\alpha=\alpha R_\alpha^{0,T}u$. Then
\[
u_\alpha(\BBX_t)=\int_0^t\alpha(u-u_\alpha)(\BBX_r)\,dr
+\int_0^t\,dM^{[u_\alpha]}_r,\quad t\in [0,T_\iota].
\]
By It\^o's formula,
\begin{equation}
\label{eq4.18} \BE_z\int_0^{T_\iota}\,d[M-M^{[u_\alpha]}]_t \le
\BE_z\int_0^{T_\iota}(-2\alpha|u-u_\alpha|^2(\BBX_t)+\|g(\BBX_t)\|^2)\,dt.
\end{equation}
By It\^o's isometry,  for q.e. $z\in E_{0,T}$ we have
\begin{align*}
\alpha\BE_z\int_0^{T_\iota}|u-u_\alpha|^2(\BBX_t)\,dt&
=\alpha\BE_z\int_0^{T_\iota}
\Big(E_{\BBX_t}\int_0^{T_\iota}e^{-\alpha r}
g(\BBX_r)\,d^{\dagger}\beta^\iota_r\Big)^2\,dt\\
&=\alpha\BE_z\int_0^{T_\iota}E_{\mathbf{X}_t}
\int_0^{T_\iota}e^{-2\alpha r} \|g(\BBX_r)\|^2\,dr\,dt\\
&=\frac12 \BE_z\int_0^{T_\iota}2\alpha
R^{0,T}_{2\alpha}\|g\|^2(\BBX_t)\,dt.
\end{align*}
This implies that
\begin{equation}
\label{eq4.12}
2\alpha\BE_z\int_0^{T_\iota}|u-u_\alpha|^2(\BBX_t)\,dt \rightarrow
\mathbb{E}_z\int_0^{T_\iota}\|g(\BBX_t)\|^2\,dt,
\end{equation}
because
\begin{align*}
\BE_z\int_0^{T_\iota}2\alpha R^{0,T}_{2\alpha}\|g\|^2(\BBX_t)\,dt
&=ER^{0,T}(2\alpha R^{0,T}_{2\alpha}\|g\|^2)(z)
=ER^{0,T}\|g\|^2(z)-R^{0,T}_{2\alpha}\|g\|^2(z)\\
&=\mathbb{E}_z\int_0^{T_\iota}\|g(\BBX_t)\|^2\,dt
-\mathbb{E}_z\int_0^{T_\iota}e^{-2\alpha t}\|g(\BBX_t)\|^2\,dt
\end{align*}
and by Lemma \ref{stw2.1},
$\mathbb{E}_z\int_0^{T_\iota}\|g(\BBX_t)\|^2\,dt<\infty$ for q.e.
$z\in E_{0,T}$. By (\ref{eq4.18}) and (\ref{eq4.12}),
\begin{equation}
\label{eqe.3}
\mathbb{E}_z\int_0^{T_\iota}\,d[M-M^{[u_\alpha]}]_t\rightarrow 0
\end{equation}
for q.e. $z\in E_{0,T}$. From (\ref{eqe.3}),
Burkholder-Davis-Gundy inequality and Lemma \ref{lm.200} we
conclude that there exists a process $\tilde{M}$ such that, up to
a subsequence, for $P$-a.e. $\omega\in \Omega$,
\begin{equation}
\label{eq.300} E_z\sup_{t\le
T_\iota}|M^{[u_\alpha]}_t-\tilde{M}_t|\rightarrow 0
\end{equation}
for q.e. $z\in E_{0,T}$. By a standard argument (see  the
reasoning following Eq. (5.2.23) in the proof of \cite[Theorem
5.2.1]{FOT}), $\tilde{M}(\omega;\cdot)$ is a martingale AF of
$\BBM^{0,T}$ for $P$-a.e. $\omega\in\Omega$. By Lemma
\ref{lm.201}, (\ref{eqe.3}) and (\ref{eq.300}), $\tilde{M}$ is a
$\BP_z$-modification of $M$ for q.e. $z\in E_{0,T}$. By
Proposition \ref{stw2.5} and Lemma \ref{lm.port},
$u_\alpha\rightarrow u$ in $M^2(0,T;V)$ and
$E|u_{\alpha}(t)-u(t)|^2_{L^2(E;m)}\rightarrow0$ for every $t\in
[0,T)$ as $\alpha\rightarrow\infty$. Write
$M^\alpha=M^{[u_\alpha]}$. Let $\alpha_n\rightarrow\infty$. By
Lemma \ref{lm2.0},
\[
Ee(M^{\alpha_n}-M^{\alpha_m})\le
E\|(u_{\alpha_n}-u_{\alpha_m})(0)\|^2_{L^2(E;m)}
+E\BB^{0,T}(u_{\alpha_n}-u_{\alpha_m}, u_{\alpha_n}-u_{\alpha_m}),
\]
which converges to zero as $n,m\rightarrow\infty$.  From this and
(\ref{eqe.3}) it follows that
\[
\lim_{n\rightarrow\infty}Ee(M^{\alpha_n})=Ee(M).
\]
By Lemma {\ref{lm2.0}},
\begin{equation}
\label{eq2.13} Ee(M^\alpha)
=E\|u_\alpha(0)\|^2_{L_2(E;m)}+E\BB^{0,T}(u_\alpha,u_\alpha)
-\frac12E\int_{E_{0,T}}|u_{\alpha}(z)|^2\,k(dz).
\end{equation}
Observe now that
\begin{equation}
\label{eq4.16} E\int_{E_{0,T}}|u_{\alpha}(z)|^2\,k(dz)\rightarrow
E\int_{E_{0,T}}|u(z)|^2\,k(dz).
\end{equation}
Indeed, since from the proof of Lemma \ref{lm2.0} we know that
(\ref{eqe.2}) holds for quasi-continuous elements of $\WW_T$,
there exists $v\in L^2(E_{0,T}; k)$ such that
$(E|u_\alpha|^2)^{1/2}\rightarrow v$ in $L^2(E_{0,T};k)$.
Therefore the proof of  (\ref{eq4.16}) is completed by showing
that $v^2=Eu^2$ $k$-a.e. By (\ref{eq2.9}),
\[
E|u(z)-u_\alpha(z)|^2\le \mathbb{E}_z\int_0^{T_\iota}e^{-2\alpha
t}\|g(\mathbf{X}_t\|^2\,dt,\quad z\in E_{0,T}.
\]
By Lemma \ref{stw2.1},
$\mathbb{E}_z\int_0^{T_\iota}\|g(\mathbf{X}_t)\|^2\,dt<\infty$ for
q.e. $z\in E_{0,T}$. Hence $Eu^2_\alpha\rightarrow Eu^2$ q.e.
Consequently, $Eu^2_\alpha\rightarrow Eu^2$ $k$-a.e.  since $k$ is
smooth. Thus $v^2=Eu^2$, which completes the proof of
(\ref{eq4.16}). Letting $\alpha\rightarrow\infty$ in
(\ref{eq2.13}) and using (\ref{eq4.16}) we get (\ref{eq2.10}).
\end{proof}
\begin{remark}
Let $N$ be defined by (\ref{eq1.06}). A direct calculation shows that
\begin{equation}
\label{eqeqeq1}
e(N)=\frac12 \|g_u\|^2_{L^2(E_{0,T}; m_T)}.
\end{equation}
From this one can conclude that $u\notin \mathbb{W}$, where
$\mathbb{W}$ is defined by (\ref{eq1.10}). Indeed, suppose that
$u\in\mathbb{W}$. We may assume that $\varphi=0, f=0$ and $g$ does
not depend on $y$. Let $A^{[u]}$ be defined by (\ref{eq1.05}) and
$u_\alpha$ be as in the proof of Proposition \ref{lm4.7}. Then by
the proof of \cite[Theorem 5.2.2]{FOT} (see also \cite[Theorem
4.5]{Trutnau}), $N=N^{[u]}$ and $N^{[u_\alpha]}$ converges to $N$
uniformly on compacts in probability $\mathbb{P}_{m_T}$. Since
$N=N^{[u]}$ we have
\begin{align*}
Ee(N)&\le 2Ee(N-N^{[u_\alpha]})+2Ee(A^{[u_\alpha]})
=2Ee(N-N^{[u_\alpha]})\\&\le 4Ee(A^{[u]}-A^{[u_\alpha]})
+4Ee(M^{[u]}-M^{[u_\alpha]}).
\end{align*}
Since $u\in \mathbb{W}$, we have
\begin{align*}
Ee(A^{[u]}-A^{[u_\alpha]})&\le E\EE(u_\alpha-u,u_\alpha-u)\\
& =E\|u_{\alpha}(0)-u(0)\|^2_{L^2(E;m)} +E\BB^{0,T}(u_{\alpha}-u,
u_{\alpha}-u).
\end{align*}
(see, e.g., \cite[Eq. (13)]{Trutnau}). When combined with the
previous inequality and Lemma \ref{lm2.0} this shows that
$e(N)=0$, which contradicts (\ref{eqeqeq1}).
\end{remark}

\medskip

In what follows by $\MM^+$ we denote the set of all positive Borel
measures on $E_{0,T}$. By $\MM^+_{0,b}$ we denote the subset of
$\MM^+$ consisting of all bounded measures  which charge no set of
zero capacity associated with the form $(\EE^{0,T},D(\EE^{0,T}))$.
The total variation norm of $\mu\in\MM^+_{0,b}$ will be denoted by
$\|\mu\|_{TV}$. To shorten notation, for $\mu\in\MM^+$ we write
$P_{\mu}(\cdot)=\int_{E_{0,T}}P_z(\cdot)\,\mu(dz)$ and by
$E_{\mu}$ (resp. $\BE_{\mu}$) we denote the expectation with
respect to $P_{\mu}$ (resp. $P\otimes P_{\mu}$).

\begin{definition}
We say that $\mu:\Omega\times\BB(E_{0,T})\rightarrow\BR$ is a
random measure if $\mu(\omega;\cdot)$ is a positive Radon measure
on $E_{0,T}$ for $P$-a.s. $\omega\in\Omega$ and $\mu(\omega;B)$ is
$\FF^{\beta}_{T}$-measurable for every $B\in\BB(E_{0,T})$.
\end{definition}

Given a random measure $\mu$ such that
$\mu(\omega;\cdot)\in\MM^+_{0,b}$ for $P$-a.e. $\omega\in\Omega$
we denote by $A^{\mu}$  the random positive AF of $\BBM^{0,T}$
such that for $P$-a.e. $\omega\in\Omega$ the measure
$\mu(\omega;\cdot)$ is the Revuz measure of the AF
$A(\omega;\cdot)$. We call $A^{\mu}$ the random AF associated with
$\mu$. Note that if the random AF associated with $\mu$ exists,
then it is uniquely determined.

In the rest of the paper writing $A^{\mu}$ for some random measure
$\mu$ we tacitly assume  that this random AF exists.

\begin{lemma}
\label{lm2.2} Assume that $\mu,\nu$ are random measures such that
$\mu,\nu\in\MM_{0,b}^+(E_{0,T})$, $P$-a.s. If for some $v\in
M^2(0,T;V)\cap C([0,T];L^2(\Omega\times E;P\otimes m))$,
\[
Ev(z)+\BE_z\int_0^{T_\iota}\,dA_t^{\mu}
\le\BE_z\int_0^{T_\iota}\,dA_t^\nu
\]
for $m_T$-a.e. $z\in E_{0,T}$ then
\begin{equation}
\label{eqe.6} E\int_{[0,T)\times E}v(z)k(dz)+E\|\mu\|_{TV}\le
E\|\nu\|_{TV}.
\end{equation}
\end{lemma}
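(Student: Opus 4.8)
The plan is to test the assumed inequality against the approximating measures $k^\beta=\beta(1-\beta\hat R^{0,T}_\beta 1)\cdot m_T$ from Lemma \ref{lm2.0} and then let $\beta\to\infty$, so that the additive-functional terms turn into total variations and the $v$-term into the $k$-integral. Since $k^\beta\ll m_T$, the hypothesis holds $k^\beta$-a.e.; integrating it in $z$ and taking the expectation $E$ gives
\[
\int_{E_{0,T}}Ev\,dk^\beta+\int_{E_{0,T}}\BE_z[A^\mu_{T_\iota}]\,k^\beta(dz)\le\int_{E_{0,T}}\BE_z[A^\nu_{T_\iota}]\,k^\beta(dz).
\]
Everything then reduces to analysing, for fixed $\omega$, the two functional integrals, where $A^\mu(\omega;\cdot),A^\nu(\omega;\cdot)$ are PCAFs of $\BBM^{0,T}$ with Revuz measures $\mu(\omega;\cdot),\nu(\omega;\cdot)$.

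The key computation is the identity
\[
\int_{E_{0,T}}E_z[A^\lambda_{T_\iota}]\,k^\beta(dz)=\int_{E_{0,T}}\beta\hat R^{0,T}_\beta 1\,d\lambda,
\]
valid for any bounded smooth measure $\lambda$ with associated PCAF $A^\lambda$ (its potential is $m_T$-integrable, since $E_{m_T}[A^\lambda_{T_\iota}]=\int_{E_{0,T}}\hat R^{0,T}1\,d\lambda\le T\|\lambda\|_{TV}$). First I would invoke the $0$-order Revuz formula, in the form $E_{f\cdot m_T}[A^\lambda_{T_\iota}]=\int_{E_{0,T}}\hat R^{0,T}f\,d\lambda$ for $f\ge0$ (see \cite{O1,O3}), with the nonnegative test function $f=\beta(1-\beta\hat R^{0,T}_\beta 1)$, the density of $k^\beta$. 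This turns the left-hand side into $\int_{E_{0,T}}\hat R^{0,T}\big(\beta(1-\beta\hat R^{0,T}_\beta 1)\big)\,d\lambda$, and the resolvent equation $\hat R^{0,T}\hat R^{0,T}_\beta=\beta^{-1}(\hat R^{0,T}-\hat R^{0,T}_\beta)$ collapses the integrand to $\beta\hat R^{0,T}_\beta 1$, proving the identity. Next I would let $\beta\to\infty$: since $1-\beta\hat R^{0,T}_\beta 1(z)$ is the expectation under the dual process of $\exp(-\beta\cdot(\text{dual lifetime}))$, it tends to $0$ at every $z\in E_{0,T}$ at which the dual lifetime is strictly positive, hence q.e.; as $\beta\hat R^{0,T}_\beta 1\nearrow1$ monotonically and $\mu,\nu\in\MM^+_{0,b}$ charge no exceptional set, monotone convergence gives $\int\beta\hat R^{0,T}_\beta 1\,d\mu\nearrow\|\mu\|_{TV}$ and likewise for $\nu$, so in particular $\int\beta\hat R^{0,T}_\beta 1\,d\nu\le\|\nu\|_{TV}$ for every $\beta$. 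Applying this with $\lambda=\mu(\omega;\cdot)$ and $\lambda=\nu(\omega;\cdot)$ and taking $E$ (Tonelli) converts the two functional integrals above into $E\int\beta\hat R^{0,T}_\beta 1\,d\mu\nearrow E\|\mu\|_{TV}$ and into a quantity bounded by $E\|\nu\|_{TV}$.

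It then remains to pass to the limit in the $v$-term, i.e.\ to show $\int_{E_{0,T}}Ev\,dk^\beta\to\int_{[0,T)\times E}Ev\,dk$; combined with the two limits above, letting $\beta\to\infty$ in the displayed inequality yields \mbox{\rm(\ref{eqe.6})}. For this I would use the vague convergence $k^\beta\to k$ of Lemma \ref{lm2.0} together with the time-continuity of $Ev\in C([0,T];L^2(E;m))$ and the uniform control of $(\,\cdot\,,k^\beta)$ obtained there, approximating $Ev$ by functions for which the convergence $\int\cdot\,dk^\beta\to\int\cdot\,dk$ was already established in the proof of Lemma \ref{lm2.0} (there for $u^2$ with $u\in\WW_T$, extended by polarization), and checking that $k$ does not charge $\{T\}\times E$, so that the limit is an integral over $[0,T)\times E$. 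I expect this last step to be the main obstacle: the measures $k^\beta$ need not be finite and converge only vaguely, while $v$ lies merely in $M^2(0,T;V)\cap C([0,T];L^2)$ rather than in $\WW_T$, so justifying $\int Ev\,dk^\beta\to\int Ev\,dk$ for a signed, non-quadratic integrand is where the real work lies; the additive-functional part, by contrast, is handled cleanly by the Revuz-formula identity above.
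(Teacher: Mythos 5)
Your overall strategy is the same as the paper's, and your treatment of the additive-functional terms is correct and complete. The paper also tests the hypothesis against the density $\alpha(1-\alpha\hat{R}^{0,T}_\alpha 1)$ of $k^\alpha$; it then evaluates the resulting pairing on the primal side, via $(R^{0,T}\mu,\alpha(1-\alpha\hat{R}^{0,T}_\alpha 1))_{L^2(E_{0,T};m_T)}=E_{m_T}\int_0^{T_\iota}\alpha e^{-\alpha t}\,dA^\mu_t\rightarrow\|\mu\|_{TV}$, whereas you evaluate it on the dual side, obtaining $\int_{E_{0,T}}\alpha\hat{R}^{0,T}_\alpha 1\,d\mu\nearrow\|\mu\|_{TV}$ by monotone convergence and the fact that $\mu$ charges no exceptional set. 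These are two readings of the same Revuz duality; your computation (zero-order Revuz formula plus the resolvent equation) is sound and in fact more explicit than the paper's ``elementary check''.

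The genuine gap is the one you flag yourself: the convergence of the $v$-term, $\int Ev\,dk^\beta\rightarrow\int_{[0,T)\times E}Ev\,dk$, is not proved, and this is exactly where the paper's proof does its real work. Moreover, the two devices you propose would not close it. First, polarizing the convergence of Lemma \ref{lm2.0} only yields $\int uw\,dk^\beta\rightarrow\int uw\,dk$ for quasi-continuous $u,w\in\WW_T$; but $v$ lies merely in $M^2(0,T;V)\cap C([0,T];L^2(\Omega\times E;P\otimes m))$ --- there is no control on $\partial v/\partial t$ and $v(T)\neq 0$ in general --- so $v$ is not covered by any polarized version of that statement. Second, the property $k(\{T\}\times E)=0$ is neither proved nor needed in the paper. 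What the paper actually does is: (i) when $v(T)=0$, it smooths by the resolvent, setting $v_m=\beta_m R^{0,T}_{\beta_m}v$, which belongs to $\WW_T$ (and vanishes at $T$ automatically), and uses the quantitative estimate (\ref{eqe.4}) from Lemma \ref{lm2.0}, whose right-hand side is controlled by $\|(v_m-v)(0)\|_{L^2(E;m)}$ and $\BB^{0,T}(v_m-v,v_m-v)$, both tending to zero by Lemma \ref{lm.port}; this transfers the convergence $(\,\cdot\,,k^\beta)\rightarrow(\,\cdot\,,k)$ from $v_m\in\WW_T$ to $v$; (ii) when $v(T)\neq 0$, it approximates $v$ from below by $v_n\le v$ with $v_n(T)=0$ and $v_n\nearrow v$ on $[0,T)\times E$, applies case (i) to each $v_n$, and lets $n\rightarrow\infty$. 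This one-sided approximation is precisely what produces the domain $[0,T)\times E$ in (\ref{eqe.6}); no statement about the mass of $k$ on $\{T\}\times E$ is required, and your route would saddle you with an additional unproven claim. Without an argument of this kind --- a $\beta$-uniform quantitative estimate valid for functions that are only in $M^2(0,T;V)\cap C([0,T];L^2)$, together with the cutoff at $t=T$ --- the proof is incomplete.
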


\begin{proof}
Since $Ev(z)+ER^{0,T}\mu(z)\le ER^{0,T}\nu(z)$ for $m_T$-a.e.
$z\in E_{0,T}$, we have
\begin{align*}
&E(v,\alpha(1-\alpha \hat{R}_\alpha^{0,T}1))_{L^2(E_{0,T};m_T)}+
E(R^{0,T}\mu,\alpha(1-\alpha \hat{R}_\alpha^{0,T}1))_{L^2(E_{0,T};m_T)}\\
&\quad\le E(R^{0,T}\nu,\alpha(1-\alpha
\hat{R}_\alpha^{0,T}1))_{L^2(E_{0,T};m_T)}.
\end{align*}
Hence
\begin{align*}
&E(v,\alpha(1-\alpha \hat{R}_\alpha^{0,T}1))_{L^2(E_{0,T};m_T)}
+E(\alpha(I-\alpha R^{0,T}_\alpha)R^{0,T}\mu,1)_{L^2(E_{0,T};m_T)}
\\&\quad\le E(\alpha(I-\alpha R^{0,T}_{\alpha})
R^{0,T}\nu,1)_{L^2(E_{0,T};m_T)}.
\end{align*}
It is an elementary check that
\[
\alpha(I-\alpha R^{0,T}_\alpha)R^{0,T}\mu(z)
=E_z\int_0^{T_\iota}\alpha e^{-\alpha t}\, dA_t^\mu,\quad z\in
E_{0,T}.
\]
Therefore
\begin{equation}
\label{eqe.5}
E(v,\alpha(1-\alpha\hat{R}_\alpha^{0,T}1))_{L^2(E_{0,T};m_T)}
+\BE_{m_T}\int_0^{T_\iota}\alpha e^{-\alpha t}\,
dA_t^\mu\le\BE_{m_T}\int_0^{T_\iota}\alpha e^{-\alpha t}\,
dA_t^\nu.
\end{equation}
Letting $\alpha\rightarrow\infty$ in (\ref{eqe.5}) we get
(\ref{eqe.6}). Indeed, directly from the definition of the Revuz
duality it follows that the integrals involving $A^{\mu}$ and
$A^{\nu}$ converge to $E\|\mu\|_{TV}$ and $E\|\nu\|_{TV}$,
respectively. To show the convergence of the first term on the
left-hand side of (\ref{eqe.5}), let us first assume that
$v(T)=0$. Set $v_m=\beta_m R^{0,T}_{\beta_m}v$. Then by
(\ref{eqe.4}),
\begin{align*}
\limsup _{n\rightarrow \infty}
|(v^2,k^{\alpha_n})^{1/2}-(v^2,k)^{1/2}|
&\le4\|(v_m-v,v_m-v)(0)\|_{L_2(E;m)}\\
&\quad+4(\BB^{0,T}(v_m-v,v_m-v))^{1/2}.
\end{align*}
Since $v_m(t)\rightarrow v(t)$ in $L^2(E;m)$ for every $t\in
[0,T)$ and $v_m\rightarrow v$ in $M^2(0,T;V)$, this shows  the
desired convergence of the first term.  In general, if $v(T)\neq
0$, we consider a sequence $\{v_n\}\subset M^2(0,T;V)\cap
C([0,T];L^2(\Omega\times E;P\otimes m))$ such that $v_n\le v$,
$v_n(T)=0$ and $v_n\nearrow v$ on $[0,T)\times E$. Of course,
$v_n$ satisfies the assumptions of the lemma, so by what has
already been proved, (\ref{eqe.6}) is satisfied with $v$ replaced
by $v_n$. Letting $n\rightarrow\infty$ we get (\ref{eqe.6}) for
$v$.
\end{proof}

\begin{theorem}
\label{th4.4} Assume \mbox{\rm(H1)--(H3)}. Let $u$ be a solution
of \mbox{\rm SPDE (\ref{eq4.1})}. Then $u\in S^2(0,T;L^2(E;m))\cap
M^2(0,T;V)$ and there is $c>0$ depending only on $T,L,l$ such that
\begin{align}
\label{eq4.19}
&\sup_{0\le t\le T}E\|u(t)\|^2_{L^2(E;m)}+E\BB^{0,T}(u,u)\\
&\qquad\le cE\Big(\|\varphi\|^2_{L^2(E;m)}+
\|f(\cdot,0)\|^2_{L^2(E_{0,T};m_T)}+\sum^{\infty}_{k=1}
\|g_k(\cdot,0)\|^2_{L^2(E_{0,T};m_T)}\Big). \nonumber
\end{align}
\end{theorem}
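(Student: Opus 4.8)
The plan is to combine the a priori estimates of Proposition \ref{stw1.2} for the Markov BDSDE with the energy identity (\ref{eq2.10}) of Proposition \ref{lm4.7}. By Remark \ref{rem3.8} there is a process $M$ such that for q.e.\ $z\in E_{0,T}$ the pair $(u(\BBX),M)$ solves $\mbox{\rm BDSDE}_z(\varphi,f,g)$; by Lemma \ref{stw2.1} the corresponding data satisfy (A1)--(A4) under $\BP_z$, so Proposition \ref{stw1.2} is available for every such $z$.

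For the first term of (\ref{eq4.19}) I would fix $t$, apply Proposition \ref{stw1.2} at the starting point $z=(t,x)$, and use $Y_0=u(\BBX_0)=u(t,x)$ together with $\BE_{t,x}|Y_0|^2\le\BE_{t,x}\sup_{s\le T_\iota}|Y_s|^2$. Integrating in $x$ against $m$ and invoking the sub-Markovian contraction of the semigroup (exactly as in the duality computation in the proof of Lemma \ref{stw2.1}, controlling the hitting-time expectations through $\hat R^{0,T}$) bounds $\int_E\BE_{t,x}(\dots)\,m(dx)$ by the $L^2(E_{0,T};m_T)$-norms of $\varphi,f(\cdot,0),g(\cdot,0)$, uniformly in $t$. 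This gives the bound on $\sup_t E\|u(t)\|^2_{L^2(E;m)}$, the passage from a.e.\ $t$ to every $t$ being provided by the time-continuity inherent in the representations of Proposition \ref{stw2.5} and Lemma \ref{lm.port}.

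The estimate on $E\BB^{0,T}(u,u)$ is the crux, and is where Lemma \ref{lm2.2} and the energy identity cooperate. Applying the It\^o--Meyer formula to $|u(\BBX_t)|^2=|Y_t|^2$ at $t=0$ and taking $\BE_z$, the martingale terms drop and, since $[M]-\langle M\rangle$ is a martingale, I obtain for q.e.\ $z$ the identity
\[
E|u(z)|^2+\BE_z\langle M\rangle_{T_\iota} =\BE_z|\varphi(\BBX_{T_\iota})|^2 +\BE_z\int_0^{T_\iota}\big(2f(\BBX_r,Y_r)Y_r+\|g(\BBX_r,Y_r)\|^2\big)\,dr .
\]
Estimating the integrand from above by $C|u|^2+|f(\cdot,0)|^2+2\|g(\cdot,0)\|^2$ via the monotonicity (H3a) and the Lipschitz bound (H3b) turns this into an inequality $E|u(z)|^2+\BE_z\int_0^{T_\iota}dA^{\mu}\le\BE_z\int_0^{T_\iota}dA^{\nu}$ with $\mu=\mu_{\langle M\rangle}$ (so $\|\mu\|_{TV}=2e(M)$ by (\ref{eq4.5})) and $\nu\in\MM^+_{0,b}$ the random measure whose density is the above upper bound, augmented by the terminal contribution $\varphi^2$ on $\{T\}\times E$. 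Lemma \ref{lm2.2} with $v=|u|^2$ then yields
\[
E\int|u|^2\,dk+2Ee(M)\le E\|\nu\|_{TV}.
\]
Substituting $2Ee(M)=2E\|u(0)\|^2_{L^2(E;m)}+2E\BB^{0,T}(u,u)-E\int|u|^2\,dk$ from (\ref{eq2.10}), the killing-measure term cancels and, dropping the nonnegative $E\|u(0)\|^2_{L^2(E;m)}$, leaves $2E\BB^{0,T}(u,u)\le E\|\nu\|_{TV}$; bounding $E\|\nu\|_{TV}$ by the data together with the already controlled $E\|u\|^2_{L^2(E_{0,T};m_T)}\le T\sup_tE\|u(t)\|^2_{L^2(E;m)}$ closes the estimate.

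The main obstacle I anticipate is the rigorous identification of the Revuz measures and the verification that Lemma \ref{lm2.2} applies: one must check $\mu_{\langle M\rangle},\nu\in\MM^+_{0,b}$, arrange $\nu\ge0$ by transferring the possibly negative $f$-term to the right via monotonicity, and---since $|u|^2$ need not lie in $M^2(0,T;V)\cap C([0,T];L^2(\Omega\times E;P\otimes m))$---run the argument for the truncations $(-n)\vee u\wedge n$ and pass to the limit, the positivity of $k$ rendering this step monotone. A second delicate point is the bookkeeping of the integration domains for $k$ (the $[0,T)\times E$ of Lemma \ref{lm2.2} versus the $E_{0,T}=(0,T]\times E$ of (\ref{eq2.10})) and the incorporation of the terminal datum $\varphi$ into the terminal slice, on which the cancellation of $E\int|u|^2\,dk$ ultimately hinges.
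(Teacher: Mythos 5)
Your proposal is correct and follows essentially the same route as the paper's proof: the a priori estimate of Proposition \ref{stw1.2} applied to $(u(\BBX),M)$, fed into Lemma \ref{lm2.2} with $v=|u|^2$ and $\mu=\mu_{\langle M\rangle}$, and then (\ref{eq4.5}) combined with the energy identity (\ref{eq2.10}) of Proposition \ref{lm4.7} to cancel the killing-measure term. The only (harmless) deviations are organizational—you keep the $C|u|^2$ term inside $\nu$ and absorb it via the separately proved bound on $\sup_{t}E\|u(t)\|^2_{L^2(E;m)}$ obtained by slice integration and duality, whereas the paper uses the Gronwall-ed bound of Proposition \ref{stw1.2} (so its $\nu$ contains only the data) and gets the supremum by rerunning the argument on $[t,T]$—and the two technical points you flag, namely the applicability of Lemma \ref{lm2.2} to $v=|u|^2$ and the $[0,T)$ versus $(0,T]$ bookkeeping for $k$, are glossed over in the paper's own proof as well.
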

\begin{proof}
By Proposition \ref{stw1.2},
\begin{align*}
&\BE_z\sup_{0\le t\le T_\iota}
|u(\BBX_t)|^2+\BE_z\int_0^{T_\iota}d[M]_t \\
&\qquad\le c\BE_z\Big(|\varphi(\BBX_{T_\iota})|^2 +
\int_0^{T_\iota}(|f(\BBX_t,0)|^2+\|g(\BBX_t,0)\|^2)\,dt\Big).
\end{align*}
Hence
\[
E|u(z)|^2 +\BE_z\int_0^{T_\iota}d\langle M\rangle_t \le
c\BE_z\int_0^{T_\iota}dA^{\nu}_t,
\]
where
\[
\nu(dz)=(\delta_{\{T\}}\otimes|\varphi|^2\cdot m)(dz)+
(|f(z,0)|^2+\|g(z,0)\|^2)\,m_T(dz).
\]
Let $\mu_{\langle M\rangle}$ denote the random smooth measure
associated with the random continuous AF $\langle M\rangle$ of
$\BBM$. By Lemma \ref{lm2.2},
\begin{equation}
\label{eq2.16} E\int_{E_{0,T}}|u(z)|^2\,k(dz) +E\|\mu_{\langle
M\rangle} \|_{TV}\le cE\|\nu\|_{TV}.
\end{equation}
Since by (\ref{eq4.5}), $\|\mu_{\langle M\rangle}\|_{TV}=2e(M)$,
$P$-a.s., it follows  from Proposition \ref{lm4.7} and
(\ref{eq2.16}) that
\[
E\|u(0)\|^2_{L^2(E;m)}+E\BB^{0,T}(u,u)\le cE\|\nu\|_{TV}.
\]
Since the same estimate can be obtained on any interval $[t,T]$
with  $t\in(0,T)$, and $cE\|\nu\|_{TV}$ is equal to the right-hand
side of (\ref{eq4.19}), the theorem is proved.
\end{proof}

\section{BDSDEs with Brownian filtration}
\label{sec5}

In the present section and in Section \ref{sec6} we assume that
the filtration $(\GG_t)$ of Section \ref{sec2}  is generated by a
$d$-dimensional Wiener process $W$ on $\Omega'$. This will allow
us to treat in Section \ref{sec6} equations  dependent on the
gradient of a solution.

We also assume that we are given an $\FF_T$-measurable random
variable $\xi$ and two families
$\{f(t,y,z),t\ge0\}_{y\in\BR,z\in\BR^d}$,
$\{g_k(t,y,z),t\ge0\}_{y\in\BR,z\in\BR^d,k\in\BN}$ of processes of
class $M$ (as in Section \ref{sec2}, in our notation we omit the
dependence on $(\omega,\omega')\in\Omega\times\Omega'$). We set
$g(\cdot,y,z)=(g_1(\cdot,y,z),g_2(\cdot,y,z),\dots)$.

Let us consider the following hypotheses.

\begin{enumerate}
\item[(B1)] $\BE|\xi|^2+\mathbb{E}\int_0^T|f(t,0,0)|^2\,dt
+\mathbb{E}\int_0^T\|g(t,0,0)\|^2\,dt<\infty$.

\item[(B2)]$\int_0^T|f(t,y,0)|\,dt<\infty$,
$\mathbb{P}$-a.s. for every $y\in\BR$.

\item[(B3)]There exist $l,L>0$, $m\in (0,1)$
and $M_k, L_k:\Omega\times[0,T]\rightarrow\mathbb{R}_+$ such that
$\sup_{0\le t\le T}\sum_{k=1}^{\infty}L^2_k(t)\le l$, $\sup_{0\le
t\le T} \sum_{k=1}^{\infty}M^2_k(t)\le m$, $P$-a.s. and for a.e.
$t\in[0,T]$,
\begin{enumerate}
\item[(a)] $(f(t,y,z)-f(t,y',z))(y-y')\le L|y-y'|^2$ for all
$y,y'\in\BR$, $z\in\BR^d$,
\item[(b)] $|f(t,y,z)-f(t,y,z')|\le L|z-z'|$ for all $y\in\BR$,
$z,z'\in\BR^d$,
\item[(c)] $|g_k(t,y,z)-g_k(t,y',z')|\le L_k(t)|y-y'|
+M_k(t)|z-z'|$ for all $y,y'\in\BR$, $z,z'\in\BR^d$.
\end{enumerate}
\item[(B4)]For a.e. $t\in[0,T]$ and every $z\in\BR^d$ the mapping
$\BR\ni y\mapsto f(t,y,z)$
is continuous.
\end{enumerate}

\begin{definition}
We say that a pair $(Y,Z)\in\mathcal{S}^2\times M^2$ is a solution
of BDSDE$(\xi,f,g)$ if
\begin{enumerate}
\item[(a)]
$\BP(\int_0^T(|f(t,Y_t,0)|+\|g(t,Y_t,0)\|^2)\,dt<\infty)=1$,

\item[(b)] $Y_t=\xi+\int_t^T f(r,Y_r,Z_r)\,dr
+\int_t^Tg(r,Y_r,Z_r)\,d^\dagger\beta_r-\int_t^TZ_r\,dW_r$, $t\in
[0,T]$, $\BP$-a.s.
\end{enumerate}
\end{definition}

\begin{remark}
If the coefficients $f,g$ do not depend on $z$ and $(Y,Z)$ is a
solution of BDSDE$(\xi,f,g)$  then the pair
\[
(Y_t,M_t):=(Y_t,\int_0^t Z_r\,dW_r),\quad t\in[0,T]
\]
is a solution of BDSDE$(\xi,f,g)$ in the sense of Section
\ref{sec3}. The difference between Section \ref{sec3} and Section
\ref{sec5} is that in the present section we have additional
information on the filtration $(\GG_t)$, which  gives us
additional information on the component $M$ of the solution.
\end{remark}

\begin{proposition}
\label{stw1.111} Let $g$  satisfy $\mbox{\rm{(B3c)}}$ and either
$f$ or $f'$ satisfy $\mbox{\rm{(B3a), (B3b)}}$.  If $\xi\le\xi'$,
$\mathbb{P}$-a.s. and $f'(t,y,z)\le f(t,y,z)$ for a.e. $t\in[0,T]$
and every $y\in\BR$ and $z\in\BR^d$ then
\[
Y'_t\le Y_t,\quad t\in[0,T],\quad\BP\mbox{\rm{-}}a.s.,
\]
where $(Y,M)$, $(Y',M')$ are  solutions of
$\mbox{\rm{BDSDE}}(\xi,f,g)$ and $\mbox{\rm{BSDE}}(\xi',f',g)$,
respectively.
\end{proposition}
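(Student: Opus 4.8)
The plan is to follow the proof of Proposition~\ref{stw1.1}, applying the It\^o--Meyer formula to $((Y'-Y)^+)^2$, but now tracking the dependence on the gradient variable $z$ and using the strict bound $m<1$ of (B3c) to absorb the extra quadratic-variation terms. As there, I would assume without loss of generality that $f$ satisfies (B3a),(B3b), the case of $f'$ being symmetric. Writing $\hat Y=Y'-Y$, $\hat Z=Z'-Z$, $M=\int_0^\cdot Z_r\,dW_r$ and $M'=\int_0^\cdot Z'_r\,dW_r$, the It\^o--Meyer formula yields
\begin{align*}
(\hat Y_t^+)^2+\int_t^T\mathbf{1}_{\{\hat Y_r>0\}}|\hat Z_r|^2\,dr
&\le 2\int_t^T\hat Y_r^+\big(f'(r,Y'_r,Z'_r)-f(r,Y_r,Z_r)\big)\,dr\\
&\quad+2\int_t^T\hat Y_r^+\big(g(r,Y'_r,Z'_r)-g(r,Y_r,Z_r)\big)\,d^{\dagger}\beta_r\\
&\quad-2\int_t^T\hat Y_r^+\,\hat Z_r\,dW_r\\
&\quad+\sum_{k=1}^{\infty}\int_t^T\mathbf{1}_{\{\hat Y_r>0\}}\big|g_k(r,Y'_r,Z'_r)-g_k(r,Y_r,Z_r)\big|^2\,dr.
\end{align*}
The terminal contribution $((Y'_T-Y_T)^+)^2$ is handled exactly as in Proposition~\ref{stw1.1}; the forward martingale $M'-M=\int\hat Z\,dW$ produces the $|\hat Z|^2$ term on the left, while the backward integral contributes its quadratic variation $\sum_k|g_k(\cdot,Y'_r,Z'_r)-g_k(\cdot,Y_r,Z_r)|^2$ on the right, which is the doubly-stochastic signature already used in Section~\ref{sec2}.

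Next I would bound the two bracketed families of terms. For the drift, split
\[
f'(r,Y'_r,Z'_r)-f(r,Y_r,Z_r)=\big(f'-f\big)(r,Y'_r,Z'_r)+\big(f(r,Y'_r,Z'_r)-f(r,Y_r,Z'_r)\big)+\big(f(r,Y_r,Z'_r)-f(r,Y_r,Z_r)\big),
\]
where multiplication by $\hat Y_r^+\ge0$ makes the first term $\le0$ (since $f'\le f$), the second $\le L(\hat Y_r^+)^2$ by the monotonicity (B3a), and the third, via the $z$-Lipschitz bound (B3b) and Young's inequality, $\le \tfrac{\varepsilon}{2}\mathbf{1}_{\{\hat Y_r>0\}}|\hat Z_r|^2+\tfrac{L^2}{2\varepsilon}(\hat Y_r^+)^2$. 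For the backward quadratic-variation term I would use (B3c) together with $(a+b)^2\le(1+\delta)a^2+(1+\delta^{-1})b^2$ and the summability bounds $\sum_kL_k^2\le l$, $\sum_kM_k^2\le m$ to obtain, on $\{\hat Y_r>0\}$, the estimate $(1+\delta)\,l\,(\hat Y_r^+)^2+(1+\delta^{-1})\,m\,|\hat Z_r|^2$.

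The decisive step is the accounting of the $|\hat Z_r|^2$ terms. On the left they appear with coefficient $1$, whereas the third drift piece and the $g$-term together contribute coefficient $\varepsilon+(1+\delta^{-1})m$ on the right. Because $m<1$, I can first fix $\delta$ so large that $(1+\delta^{-1})m<1$ and then $\varepsilon>0$ so small that $\varepsilon+(1+\delta^{-1})m\le1$; the resulting net coefficient of $\mathbf{1}_{\{\hat Y_r>0\}}|\hat Z_r|^2$ on the left is then nonnegative and can be discarded. What remains is $(\hat Y_t^+)^2\le C\int_t^T(\hat Y_r^+)^2\,dr$ plus the two stochastic integrals, with $C=2L+L^2/\varepsilon+(1+\delta)l$. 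Taking $\BE$ annihilates the forward integral $-2\int_t^T\hat Y_r^+\hat Z_r\,dW_r$ and the backward integral $2\int_t^T\hat Y_r^+(g(r,Y'_r,Z'_r)-g(r,Y_r,Z_r))\,d^{\dagger}\beta_r$ (both are genuine martingales thanks to $Y,Y'\in\SM^2$ and $Z,Z'\in M^2$, after a standard localization), and Gronwall's lemma applied to $\BE(\hat Y_t^+)^2$ forces $\BE(\hat Y_t^+)^2=0$ for every $t$, i.e. $Y'_t\le Y_t$ for all $t$, $\BP$-a.s., by path continuity.

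The main obstacle is precisely this absorption: in contrast to Proposition~\ref{stw1.1}, where $g$ does not depend on $z$, here both $f$ and $g$ inject a $|\hat Z|^2$ contribution into the estimate while only a single such term is supplied by the forward martingale, and the sole mechanism preventing these from dominating is the strict inequality $m<1$ in (B3c). The two-parameter tuning of $\delta$ and $\varepsilon$ is exactly what converts $m<1$ into the usable coefficient gap above; note also that (B3b) is stated with an ordinary Lipschitz constant $L$, so its $z$-contribution can be made arbitrarily small through $\varepsilon$, whereas (B3c) must respect the $l^2$-summability with the sharp constant $m<1$.
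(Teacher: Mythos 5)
Your proof is correct and follows essentially the same route as the paper's: the It\^o--Meyer formula applied to $|(Y'-Y)^+|^2$, the splitting of the drift using $f'\le f$ together with (B3a) and (B3b), the bound on the backward quadratic-variation term via (B3c), absorption of the $|Z'-Z|^2$ contributions into the left-hand side using $m<1$, and Gronwall's lemma after taking expectations. If anything, your two-parameter tuning of $\varepsilon$ and $\delta$ makes the absorption step more explicit than the paper's own write-up, whose displayed intermediate bounds suppress the cross terms $2L(Y'-Y)^+|Z'-Z|$ and $2L_kM_k|Y'-Y|\,|Z'-Z|$ that your Young-type estimates handle precisely.
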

\begin{proof}
Assume that $f$ satisfies (B3a), (B3b). By  It\^o's formula,
\begin{align*}
&|(Y_t'-Y_t)^+|^2+\int_t^T\mathbf{1}_{\{Y'_r>Y_r\}}|Z'_r-Z_r|^2\,dr\\
&\quad\le 2\int_t^T(Y'_r-Y_r)^+(f'(r,Y_r',Z'_r)-f(r,Y_r,Z_r))\,dr
-2\int_t^T(Y'_{r}-Y_{r})^+(Z'_r-Z_r)\,dW_r\\
&\qquad+\sum^{\infty}_{k=1}
\int_t^T\mathbf{1}_{\{Y'_r>Y_r\}}|g_k(r,Y'_r,Z'_r)-g_k(r,Y_r,Z_r)|^2\,dr.
\end{align*}
By the assumptions,
\[
\int_t^T(Y'_r-Y_r)^+(f'(r,Y'_r,Z'_r)-f(r,Y_r,Z_r))\,dr
\le 2L\int_t^T|(Y'_r-Y_r)^+|^2\,dr
\]
and
\begin{align*}
&\sum^{\infty}_{k=1}\int_t^T
\mathbf{1}_{\{Y'_r>Y_r\}}|g_k(r,Y'_r,Z'_r)-g_k(r,Y_r,Z_r)|^2\,dr\\
&\quad\le
\sum^{\infty}_{k=1}\int_t^T(L^2_k(r)\mathbf{1}_{\{Y'_r>Y_r\}}|Y'_r-Y_r|^2
+M^2_k(r)\mathbf{1}_{\{Y'_r>Y_r\}}|Z'_r-Z_r|^2)\,dr \\
&\quad\le l\int_t^T|(Y'_r-Y_r)^+|^2\,dr
+m\int_t^T\mathbf{1}_{\{Y'_r>Y_r\}}|Z'_r-Z_r|^2\,dr.
\end{align*}
By the above estimates,
\[
\mathbb{E}|(Y'_t-Y_t)^+|^2
\le2(L+l)\mathbb{E}\int_t^T|(Y'_r-Y_r)^+|^2\,dr,
\quad t\in[0,T],
\]
so  Gronwall's lemma yields the desired result.
\end{proof}

\begin{corollary}
\label{cor5.3} Let assumption \mbox{\rm{(B3)}} hold. Then
there exists at most one solution of \mbox{\rm BDSDE}$(\xi,f,g)$.
\end{corollary}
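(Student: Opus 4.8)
The plan is to mirror the two-part strategy already used for Corollary \ref{wn1.1}: deduce equality of the $Y$-components from the comparison result, and then use It\^o's formula together with the strict contraction constant $m<1$ of \mbox{\rm(B3)} to force equality of the $Z$-components. So let $(Y,Z)$ and $(Y',Z')$ be two solutions of $\mbox{\rm BDSDE}(\xi,f,g)$.

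First I would apply Proposition \ref{stw1.111} with the same data on both sides, that is with $f'=f$, $\xi'=\xi$ and the common $g$. Since $f$ satisfies \mbox{\rm(B3a), (B3b)} and the comparison hypotheses $\xi\le\xi'$ and $f'(t,y,z)\le f(t,y,z)$ then hold trivially (with equality), the proposition yields $Y'_t\le Y_t$, $t\in[0,T]$, $\BP$-a.s. Interchanging the roles of the two solutions gives the reverse inequality, and hence $Y_t=Y'_t$, $t\in[0,T]$, $\BP$-a.s.

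Next, to identify the $Z$-components, I would set $\bar Y=Y-Y'$, $\bar Z=Z-Z'$ and subtract the two defining equations. Applying It\^o's formula to $|\bar Y_t|^2$ (in which, exactly as in the proof of Proposition \ref{stw1.2}, the backward integral contributes the term $+\int_t^T\|g(r,Y_r,Z_r)-g(r,Y'_r,Z'_r)\|^2\,dr$), taking expectations, and then inserting $\bar Y\equiv0$ (which annihilates both $|\bar Y_t|^2$ and the drift term $\int_t^T\bar Y_r(f(r,Y_r,Z_r)-f(r,Y'_r,Z'_r))\,dr$, and replaces $g(r,Y'_r,Z'_r)$ by $g(r,Y_r,Z'_r)$), I expect to obtain
\begin{equation*}
\BE\int_t^T|\bar Z_r|^2\,dr
=\BE\int_t^T\|g(r,Y_r,Z_r)-g(r,Y_r,Z'_r)\|^2\,dr
\le m\,\BE\int_t^T|\bar Z_r|^2\,dr .
\end{equation*}
Since $Z,Z'\in M^2$ guarantee $\BE\int_t^T|\bar Z_r|^2\,dr<\infty$ and $m<1$, this forces $\BE\int_0^T|\bar Z_r|^2\,dr=0$, i.e. $Z=Z'$ in $M^2$.

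The routine care is in justifying that the local-martingale terms produced by It\^o's formula, namely $\int_t^T\bar Y_r\bar Z_r\,dW_r$ and $\int_t^T\bar Y_r(g(r,Y_r,Z_r)-g(r,Y'_r,Z'_r))\,d^\dagger\beta_r$, have zero expectation; this follows by a standard localization argument from $Y,Y'\in\SM^2$, $Z,Z'\in M^2$ and the Lipschitz bounds in \mbox{\rm(B3)}. The one genuinely essential point, which is what distinguishes this argument from Corollary \ref{wn1.1}, is the \emph{strict} inequality $m<1$ in \mbox{\rm(B3c)}: it is precisely this contraction constant in the gradient variable that allows the gradient-dependent noise coefficient to be absorbed into the left-hand side and thereby yields uniqueness of $Z$. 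Were only $m\le1$ assumed, this final absorption step would break down, so I expect that to be the crux of the proof.
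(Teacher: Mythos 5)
Your proof is correct and is essentially the paper's own argument: the paper states this corollary without a written proof, as an immediate consequence of Proposition \ref{stw1.111}, and the intended reasoning is precisely your symmetric application of the comparison result (with $\xi'=\xi$, $f'=f$) to obtain $Y=Y'$, followed by the short It\^o step identifying the $Z$-components. One remark on your closing claim: the strict bound $m<1$ is indeed indispensable for Proposition \ref{stw1.111} itself (it is what allows the $|Z'-Z|^2$ terms to be absorbed there), but for the final identification $Z=Z'$ it is not really the crux, since once $Y=Y'$ one may instead take the quadratic variation of the difference of the two equations and use the independence of $W$ and $\beta$ (so the cross-variation vanishes) to get $\int_0^T|Z_r-Z'_r|^2\,dr+\int_0^T\|g(r,Y_r,Z_r)-g(r,Y_r,Z'_r)\|^2\,dr=0$, which forces $Z=Z'$ even if one only knew $m\le 1$.
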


\begin{theorem}
\label{tw4.1} Let assumptions \mbox{\rm{(B1)--(B4)}} hold.  Then
there exists a solution $(Y,Z)$ of \mbox{\rm BDSDE}$(\xi,f,g)$.
\end{theorem}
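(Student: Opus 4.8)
The plan is to adapt the four-step scheme of the proof of Theorem \ref{tw1.1} to the present setting, the two new ingredients being a fixed-point argument that absorbs the dependence of $f,g$ on the $z$-variable and the use of martingale representation to produce the component $Z$. Throughout I would work with the equivalent weighted norm $\|(Y,Z)\|_\beta^2=\BE\int_0^T e^{\beta r}(|Y_r|^2+|Z_r|^2)\,dr$, the parameter $\beta$ being chosen large at the end.

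First I would treat the case in which $f$ is, in addition to (B3)--(B4), Lipschitz continuous in $y$ uniformly in $(t,z)$. Given $(U,V)\in M^2\times M^2$, freeze it in the coefficients and solve
\[
Y_t=\xi+\int_t^T f(r,U_r,V_r)\,dr+\int_t^T g(r,U_r,V_r)\,d^\dagger\beta_r-\int_t^T dM_r,
\]
which is exactly the situation of Step~1 of the proof of Theorem \ref{tw1.1} (the coefficients no longer depend on the unknowns), so it has a solution $(Y,M)\in\SM^2\times\MM^2$. Since here $(\GG_t)$ is generated by the Wiener process $W$, conditionally on $\FF_T^\beta$ the relevant filtration is Brownian, and the martingale representation theorem yields $M_t=\int_0^t Z_r\,dW_r$ with $Z\in M^2$. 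This defines a map $\Phi(U,V)=(Y,Z)$. To see that $\Phi$ is a contraction, apply the It\^o--Meyer formula to $e^{\beta r}|Y_r-Y'_r|^2$ for two inputs $(U,V),(U',V')$; since $[M]_r=\int_0^r|Z_s|^2\,ds$, this produces on the left the terms $\BE\int_0^T e^{\beta r}|Z_r-Z'_r|^2\,dr$ and $\beta\BE\int_0^T e^{\beta r}|Y_r-Y'_r|^2\,dr$, while on the right appear $\BE\int_0^T e^{\beta r}\|g(r,U_r,V_r)-g(r,U'_r,V'_r)\|^2\,dr$ together with the cross term from $f$. Estimating the $g$-term through (B3c), Cauchy--Schwarz and Young's inequality bounds it by $(1+\gamma^{-1})l\,|U_r-U'_r|^2+(1+\gamma)m\,|V_r-V'_r|^2$, and (B3a),(B3b) with Young control the $f$-term by $\tfrac{\beta}{2}|Y_r-Y'_r|^2$ plus an $O(\beta^{-1})$ multiple of $|U_r-U'_r|^2+|V_r-V'_r|^2$. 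Choosing $\gamma$ small so that $(1+\gamma)m<1$ and then $\beta$ large gives a strict contraction in $\|\cdot\|_\beta$, hence a unique fixed point, which is the desired solution; that $Y\in\SM^2$ follows from the equation and the Burkholder--Davis--Gundy inequality.

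It remains to remove the extra Lipschitz-in-$y$ hypothesis, and here I would reproduce Steps~3 and~4 of the proof of Theorem \ref{tw1.1}. First approximate $f$ from below by the inf-convolutions $f_n$, which are Lipschitz in $y$, still satisfy (B3a),(B3b) and increase to $f$ on compacts, and solve the corresponding equations by the previous step; the comparison result Proposition \ref{stw1.111} makes the sequence $Y^n$ monotone, while an a priori estimate of the type of Proposition \ref{stw1.2}, adapted to the $z$-dependence, gives uniform bounds on $\BE\sup_{t}|Y^n_t|^2+\BE\int_0^T|Z^n_r|^2\,dr$. One then passes to the limit exactly as in Steps~3--4 (using monotonicity, (B4) and (B2) for the drift, and the $L^2$-convergence of $Y^n$ for the doubly stochastic integral), and finally removes the lower bound on $f$ by the truncation $f\vee(-n)$.

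The main obstacle throughout is the interaction between the two sources of $\int_0^T|Z_r|^2\,dr$: the quadratic variation of the martingale part $\int Z\,dW$ on the one hand, and the energy $\int_0^T\|g(r,Y_r,Z_r)\|^2\,dr$ of the backward It\^o integral on the other, the latter containing, through the constants $M_k$, a contribution of size $m\int_0^T|Z_r|^2\,dr$. The hypothesis $m\in(0,1)$ is precisely what lets this contribution be absorbed into the left-hand side, both in the contraction estimate of the first step and in the a priori bounds needed to pass to the limit in the second; keeping track of the constants so that the net coefficient of $\int_0^T|Z_r-Z'_r|^2\,dr$ stays strictly below one is the delicate point of the argument.
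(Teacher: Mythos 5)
Your first step (the Lipschitz-in-$y$ case, freezing both $(U,V)$ and contracting in a weighted norm) is correct, but it is not what the paper does, and the difference matters for your second step, which contains a genuine gap. The paper freezes \emph{only} the $z$-argument: $\Phi(U,V)$ is defined as the solution $(Y,Z)$ of the BDSDE with coefficients $f(r,Y_r,V_r)$, $g(r,Y_r,V_r)$, in which $Y$ is still the unknown. That inner equation has coefficients which are monotone and continuous in $y$ and do not depend on $z$, so Theorem \ref{tw1.1} applies to it as a black box; the representation theorem for the Brownian filtration turns the martingale component into $\int Z\,dW$, and a single Banach fixed point (using monotonicity for the $Y$-difference and $m<1$ for the $V$-difference) finishes the proof. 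No Lipschitz-in-$y$ hypothesis is ever introduced, so nothing has to be removed afterwards.

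By contrast, you put the Lipschitz contraction inside and propose to remove the Lipschitz-in-$y$ assumption afterwards by running Steps 3--4 of Theorem \ref{tw1.1} with $z$-dependent coefficients, ``passing to the limit exactly as in Steps 3--4.'' This is where the argument breaks down. In the paper's Steps 3--4 the approximating drivers $f_n$ depend only on $y$: the limit equation is recovered from the monotone convergence of $Y^n$ alone (convergence in probability of the drift integrals suffices), and the limit martingale is simply \emph{defined} at the end by a conditional-expectation formula; no Cauchy property of the martingale parts is needed. In your setting the drivers are $f_n(r,Y^n_r,Z^n_r)$ and $g(r,Y^n_r,Z^n_r)$, so before the limit equation can even be written down you must produce a limit for $Z^n$, i.e.\ prove that $\{Z^n\}$ is Cauchy in $M^2$. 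The natural It\^o estimate for $|Y^n_t-Y^m_t|^2$ gives, for $m>n$, after splitting the driver difference and absorbing the $|Z^n-Z^m|^2$-terms via (B3b) and $m<1$,
\[
c\,\BE\int_0^T|Z^n_r-Z^m_r|^2\,dr\le C\,\BE\int_0^T|Y^n_r-Y^m_r|^2\,dr
+2\,\BE\int_0^T|Y^n_r-Y^m_r|\,(f_m-f_n)(r,Y^m_r,Z^m_r)\,dr,
\]
and the last term is the obstruction: $(f_m-f_n)\le f-f_n$, the inf-convolution error $f-f_n$ is \emph{not} small uniformly in the $z$-variable (only for each fixed $z$), and under (B1)--(B4) there is no growth condition on $f$ in $y$, so the compositions $f(r,Y^m_r,\cdot)$ admit no moment bounds along the unbounded processes $Z^m$. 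Pointwise convergence together with the available $L^2$-bounds on $\sup_t|Y^k_t|$ and on $Z^m$ yields at best convergence in probability, not convergence of this expectation, so the Cauchy estimate does not close. This is precisely the difficulty that the paper's ordering of the two fixed points --- monotone problem inside, Lipschitz-in-$z$ contraction outside --- is designed to avoid (and why the literature under monotonicity plus $z$-dependence, e.g.\ \cite{WZ}, additionally assumes linear growth in $y$). Note also that the point you single out as delicate (keeping the net coefficient of $\int|Z_r-Z'_r|^2\,dr$ below one) is the routine part of the argument; the real difficulty is the one above.
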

\begin{proof}
Let $M^2_k$ denote the set of $k$-dimensional processes
$X=(X^1,\dots,X^k)$ such that $X^i\in M^2$, $i=1,\dots,k$. Define
the mapping $\Phi: M^2_1\otimes M^2_d\rightarrow M^2_1\otimes
M^2_d$ by letting $\Phi(U,V)$ be the solution $(Y,Z)$ of the BSDE
\[
Y_t=\xi+\int_t^T f(r,Y_r,V_r)\,dr+\int_t^T g(r,Y_r,V_r)\,d^\dagger
B_r -\int_t^T Z_r\,dW_r,\quad t\in [0,T].
\]
From Corollary \ref{cor5.3}, Theorem \ref{tw1.1} and the
representation theorem for Brownian filtration it follows that
$\Phi$ is well defined. Let $(U^1,V^1), (U^2,V^2)\in M^2_1\otimes
M^2_d$ and $(Y^i,Z^i)=\Phi(U^i,V^i)$, $i=1,2$. By It\^o's formula,
\begin{align*}
&\mathbb{E}e^{\beta t}|Y^1_t-Y^2_t|^2+\beta \mathbb{E}\int_t^Te^{\beta r}
|Y^1_r-Y^2_r|^2\,dr+\mathbb{E}\int_t^Te^{\beta r}|Z^1_r-Z^2_r|^2\,dr\\
&\qquad= 2\mathbb{E}\int_t^Te^{\beta r}
(f(r,Y_r^1,V^1_r)-f(r,Y^2_r,V^2_r))(Y^1_r-Y^2_r)\,dr\\
&\qquad\quad+\mathbb{E}\int_t^Te^{\beta r}\|g(r,Y^1_r,V^1_r)
-g(r,Y^2_r,V^2_r)\|^2\,dr\\
&\qquad\le 2L\mathbb{E}\int_t^Te^{\beta
r}(|Y^1_r-Y^2_r|^2+|V^1_r-V^2_r|\cdot|Y^1_r-Y^2_r|)\,dr\\
&\qquad\quad+ l\mathbb{E}\int_t^Te^{\beta
r}|Y^1_r-Y^2_r|^2\,dr+mE\int_t^T|V^1_r-V^2_r|^2\,dr.
\end{align*}
Hence, for every $\alpha>0$,
\[
c \mathbb{E}\int_t^Te^{\beta r}|Y^1_r-Y^2_r|^2\,dr
+\mathbb{E}\int_t^T e^{\beta r}|Z^1_r-Z^2_r|^2\,dr \le
(\alpha+m)\mathbb{E}\int_t^T|V^1_r-V^2_r|^2\,dr
\]
with $c=\beta-2L-(2L)^2\alpha^{-1}-l>0$. Let $\alpha, \beta$ be
chosen so that $c>0$ and $\alpha+m<1$. Then $\Phi$ is a
contraction if we equip   $M^2_1\times M^2_d$ with the norm
\begin{equation}
\label{eq.norm} \|(Y,Z)\|^2_\beta=\mathbb{E}\int_0^Te^{\beta r}
(c|Y_r|^2+|Z_r|^2)\,dr.
\end{equation}
By Banach's principle,  $\Phi$ has a fixed point. Of course, it
solves BDSDE$(\xi,f,g)$.
\end{proof}

\section{SPDEs with divergence form operator}
\label{sec6}

In this section we consider equations of the form (\ref{eq0.0})
with $A$ being a uniformly elliptic divergence form operator. We
allow, however, the coefficients $f,g$ to depend on the gradient
of a solution. More precisely, we assume that $E=D$ is an nonempty
bounded open subset of $\BR^d$ and
\[
B^{(t)}(\varphi,\psi)=\sum^d_{i,j=1}\int_D
a_{ij}(t,x)\varphi_{x_i}(x)\psi_{x_j}(x)\,dx,\quad \varphi,\psi\in
V=H^1_0(D),
\]
where $a_{ij}:[0,T]\times D\rightarrow\BR$ are measurable
functions such that for every $(t,x)\in[0,T]\times D$,
\[
a_{ij}=a_{ji},\quad \lambda|\xi|^2\le\sum_{i,j=1}^d
a_{ij}(t,x)\xi_i\xi_j\le\Lambda |\xi|^2,\quad \xi\in\BR^d
\]
for some $0<\lambda\le\Lambda$. In this case the operator $A_t$
associated with $(B^{(t)},V)$ is given by (\ref{eq6.01}). Suppose
we are given measurable functions $\varphi:D\rightarrow\BR$ and
$f,g_k:\Omega\times[0,T]\times D\times \BR\times\BR^d\rightarrow
\BR$. We consider equation  of the form
\begin{equation}
\label{eq6.1} du(t)=-(A_tu+f(t,x,u,\sigma\nabla u))\,dt
-g(t,x,u,\sigma\nabla u)\,d^{\dagger}\beta_t,\quad u(T)=\varphi,
\end{equation}
where $\sigma$ is such that $\sigma\cdot\sigma^T=a$. We are going
to show that (\ref{eq6.1}) has a unique solution under the
following assumptions:
\begin{enumerate}
\item[(D1)] $E\|\varphi\|^2_{L^2(E;m)}
+E\|f(\cdot,0,0)\|^2_{L^2(E_{0,T};m_T)}
+E\sum_{k=1}^\infty\|g_k(\cdot,0,0)\|^2_{L^2(E_{0,T};m_T)}<\infty$.

\item[(D2)] For all $y\in\BR$ and $e\in\BR^d$ the mapping
$E_{0,T}\ni z\mapsto f(z,y,e)$ belongs to $q\mathbb{L}^1$.

\item[(D3)]There exist $l,L>0$, $m\in (0,1)$
and functions $M_k, L_k:E_{0,T}\rightarrow\mathbb{R}_+$ such that
$\sup_{z\in E_{0,T}}\sum_kL_k(r)\le l$, $\sup_{z\in E_{0,T}}
\sum_kM_k(r)\le m$ and for every $z\in E_{0,T}$ we have
\begin{enumerate}
\item[(a)] $(f(z,y,e)-f(z,y',e))(y-y')\le L|y-y'|^2$ for all
$y,y'\in\BR$, $e\in\BR^d$,
\item[(b)]$|f(z,y,e)-f(z,y,e')|\le L|e-e'|$ for all $y\in\BR$,
$e,e'\in\BR^d$,
\item[(c)] $|g_k(z,y,e)-g_k(z,y',e')|\le L_k(z)|y-y'|
+M_k(z)|e-e'|$ for all $y,y'\in\BR$, $e,e'\in\BR^d$.
\end{enumerate}
\item[(D4)]For every $z\in E_{0,T}$ and $e\in\BR^d$ the  mapping
$\BR\ni y\mapsto f(z,y,e)$ is continuous.
\item[(D5)] For every $y\in\BR$, $e\in\BR^d$ the mappings
$f(\cdot,y,e),g_k(\cdot,y,e):\Omega\times E_T\rightarrow\BR$,
$k\in\BN$, are $(\FF^{\beta}_{t,T})$-progressively measurable.
\end{enumerate}

The process $\BBM^{0,T}$ associated with the operator
$\frac{\partial}{\partial t}-A_t$ has the following unique
Fukushima decomposition
\[
\BBX_t=\BBX_0+\BBM_t+\mathbf{A}_t, \quad t\in[0,T_{\iota}],\quad
P_z\mbox{-a.s.},\quad z\in E_{0,T},
\]
where $\BBM$ is a martingale AF of $\BBM^{0,T}$ of finite energy
and $\mathbf{A}$ is a continuous AF of $\BBM^{0,T}$ of zero
energy. It is well known that $W$ defined as
\[
W_t=\int_0^t\sigma^{-1}(\BBX_r)\,d\BBM^\pi_r,\quad t\ge0,
\]
where $\BBM^\pi =\pi(\BBM)$ and
$\pi(x_1,\dots,x_{d+1})=(x_2,\dots,x_{d+1})$ for  $x_i\in\BR$,
$i=1,\dots,d+1,$ is a standard $(\GG_t)$-Brownian motion under
$P_z$.

\begin{proposition}
\label{stw5.1} Assume that $\varphi,f,g$ satisfy
\mbox{\rm(D1)--(D5)} and $f,g$ do not depend on the last variable
$e$. Let $(Y,M)$ be the pair of process of Theorem \ref{stw2.2}
and let $u$ be the function defined by \mbox{\rm(\ref{eq2.7})}.
Then $u\in M^2(0,T; H_0^1(D))$ and for q.e. $z\in E_{0,T}$,
\[
\Big(u(\BBX_t),\int_0^t\sigma\nabla
u(\BBX_r)\,dW_r\Big)=(Y_t,M_t),\quad t\in [0,T_\iota],\quad
\BP_z\mbox{-a.s.}
\]
\end{proposition}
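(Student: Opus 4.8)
The plan is to deduce the statement from results already at our disposal: Theorem~\ref{th4.4} for the Sobolev regularity of $u$, Remark~\ref{rem3.8} to linearize, and the resolvent approximation from the proof of Proposition~\ref{lm4.7} to identify the martingale additive functional $M$ with the gradient stochastic integral. When $f,g$ do not depend on $e$, hypotheses (D1)--(D3) coincide with (H1)--(H3) for $V=H_0^1(D)$, so Theorem~\ref{th4.4} immediately gives $u\in S^2(0,T;L^2(D))\cap M^2(0,T;H_0^1(D))$, which is the first assertion. For the representation of $M$ I would first reduce to the linear case: by Remark~\ref{rem3.8}(ii), setting $f_u(z)=f(z,u(z))$ and $g_u(z)=g(z,u(z))$, the pair $(u(\BBX),M)$ solves the linear BDSDE$_z(\varphi,f_u,g_u)$ for q.e. $z$, and by Proposition~\ref{stw3.41} the corresponding function is exactly the $u$ given by (\ref{eq2.71}) with $f,g$ replaced by $f_u,g_u$. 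Hence it suffices to treat the case in which $f,g$ depend only on $z$ and $u$ is given by (\ref{eq2.71}).

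Now put $u_\alpha=\alpha R_\alpha^{0,T}u$. For fixed $\omega$ the function $u_\alpha(\omega;\cdot)$ belongs to the domain of the generator, hence to $\WW_T$, and its Fukushima decomposition (recalled in the proof of Proposition~\ref{lm4.7}) has a martingale part which, by the decomposition $\BBX_t=\BBX_0+\BBM_t+\mathbf{A}_t$ recalled at the beginning of this section together with $d\BBM^\pi_r=\sigma(\BBX_r)\,dW_r$, is given explicitly by
\[
M^{[u_\alpha]}_t=\int_0^t\sigma\nabla u_\alpha(\BBX_r)\,dW_r,\quad t\in[0,T_\iota].
\]
It then remains to let $\alpha\to\infty$ along two routes. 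On one hand, exactly as in the proof of Proposition~\ref{lm4.7} (via the bracket estimate culminating in (\ref{eqe.3}), the Burkholder--Davis--Gundy inequality and Lemma~\ref{lm.200}), along a subsequence $M^{[u_\alpha]}\to M$ in the sense that $E_z\sup_{t\le T_\iota}|M^{[u_\alpha]}_t-M_t|\to0$ for q.e. $z$, $P$-a.s. On the other hand, by Lemma~\ref{lm.port}(ii) we have $E\BB^{0,T}(u_\alpha-u,u_\alpha-u)\to0$; combining It\^o's isometry with the Revuz correspondence, for a smooth bounded measure $\nu$ one bounds $\BE_\nu\int_0^{T_\iota}|\sigma\nabla(u_\alpha-u)|^2(\BBX_r)\,dr$ by $\|\hat R^{0,T}\nu\|_\infty\,E\BB^{0,T}(u_\alpha-u,u_\alpha-u)\to0$, and upgrading from $\nu$-a.e. to q.e. by Lemmas~\ref{lm.200} and \ref{lm.201} yields $\int_0^\cdot\sigma\nabla u_\alpha(\BBX_r)\,dW_r\to\int_0^\cdot\sigma\nabla u(\BBX_r)\,dW_r$ for q.e. $z$. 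Matching the two limits gives $M_t=\int_0^t\sigma\nabla u(\BBX_r)\,dW_r$, while $u(\BBX)=Y$ by Theorem~\ref{tw3.4}, which is the claim.

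The step I expect to be the main obstacle is the explicit gradient representation of $M^{[u_\alpha]}$: one must show, in the present time-dependent and possibly non-symmetric divergence-form setting, that the abstract martingale AF furnished by the Fukushima decomposition of $u_\alpha(\BBX)$ coincides with the concrete integral $\int_0^\cdot\sigma\nabla u_\alpha(\BBX_r)\,dW_r$ against $W=\int_0^\cdot\sigma^{-1}(\BBX_r)\,d\BBM^\pi_r$, checking in particular that only the spatial gradient contributes, the time component of $\BBX$ being of bounded variation. The remaining work --- matching the two limits and transferring the It\^o-isometry bound from $\nu$-a.e. to q.e. $z$ --- is routine given Lemmas~\ref{lm.200} and \ref{lm.201} and the Revuz correspondence.
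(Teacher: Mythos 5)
Your overall strategy coincides with the paper's: linearize via Remark \ref{rem3.8}(ii), approximate by $u_\alpha=\alpha R^{0,T}_\alpha u$, identify $M^{[u_\alpha]}$ with $\int_0^{\cdot}\sigma\nabla u_\alpha(\BBX_r)\,dW_r$, and match the two limits; your treatment of the second limit (the Revuz bound against $\nu\in\hat S_{00}$, Lemma \ref{lm.port}(ii), and the upgrade to q.e.\ $z$) is exactly the paper's. Moreover, the step you single out as the main obstacle is available off the shelf: the paper simply quotes \cite[Theorem 5.6]{Trutnau} for $M^{[u_\alpha]}_t=\int_0^t\sigma\nabla u_\alpha(\BBX_r)\,dW_r$. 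The genuine gap lies elsewhere, in your first limit. You reduce only to the linear data $(\varphi,f_u,g_u)$ and then assert that ``exactly as in the proof of Proposition \ref{lm4.7}'' the bracket estimate (\ref{eqe.3}) gives $M^{[u_\alpha]}\rightarrow M$. But (\ref{eqe.3}) is established in that proof only after the further reduction $\varphi\equiv0$, $f\equiv0$ (and $g$ independent of $y$): its derivation rests on the representation (\ref{eq2.9}), $u(z)-u_\alpha(z)=E_z\int_0^{T_\iota}e^{-\alpha t}g(\BBX_t)\,d^\dagger\beta^\iota_t$, and on the It\^o-isometry limit (\ref{eq4.12}), $2\alpha\BE_z\int_0^{T_\iota}|u-u_\alpha|^2(\BBX_t)\,dt\rightarrow\BE_z\int_0^{T_\iota}\|g(\BBX_t)\|^2\,dt$. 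When $\varphi$ or $f_u$ is nonzero both ingredients fail: the terminal term $(u-u_\alpha)(\BBX_{T_\iota})=\varphi(\BBX_{T_\iota})\neq0$ enters the It\^o--Meyer identity behind (\ref{eq4.18}), there is a drift cross term involving $f_u$, and the limit of $2\alpha\BE_z\int_0^{T_\iota}|u-u_\alpha|^2(\BBX_t)\,dt$ picks up contributions from the deterministic part of $u$, so the right-hand side of (\ref{eq4.18}) no longer tends to zero for free.

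The paper closes precisely this hole by a finer reduction: it writes $M=M^1+M^2$, where $M^1,M^2$ are the martingales of Theorem \ref{stw2.2} associated with the data $(\varphi,f(\cdot,u),0)$ and $(0,0,g(\cdot,u))$, and disposes of $M^1$ by citing \cite[Proposition 3.6]{K:SPA}, a BSDE/PDE result giving the gradient representation $M^1=\int_0^{\cdot}\sigma\nabla w(\BBX_r)\,dW_r$ for the solution $w$ of the noiseless equation. Only after this does it run the resolvent argument, now legitimately in the setting where (\ref{eqe.3}) was proved. To repair your proof you must either insert this splitting together with that citation, or prove an analogue of (\ref{eqe.3}) for general linear data, which amounts to a separate energy-approximation argument for the deterministic component (showing $\BE_z[M^1-M^{[w_\alpha]}]_{T_\iota}\rightarrow0$ with $w_\alpha=\alpha R^{0,T}_\alpha w$, plus control of the cross terms); neither is supplied by the lemmas you invoke.
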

\begin{proof}
That $Y_t=u(\BBX_t),t\in [0,T_\iota]$,  follows from Theorem
\ref{tw3.4}. Observe that $M=M^1+M^2$, where $M^1, M^2$ are
processes of Theorem \ref{stw2.2} associated with the data
$(\varphi,f(\cdot,u),0)$ and $(0,0,g(\cdot,u))$, respectively. The
desired representation for $M^1$ follows from \cite[Proposition
3.6]{K:SPA}. Therefore we may and will assume that $\varphi=0,
f=0$ and $g$ does not depend on $y$. By Theorem \ref{th4.4}, $u\in
M^2(0,T; H^1_0(D))$. Let $u_\alpha$ be defined as in the proof of
Proposition \ref{lm4.7}. Then by (\ref{eqe.3}) and the
Burkholder-Davis-Gundy inequality,
\begin{equation}
\label{eq6.6} \lim_{\alpha\rightarrow\infty}\BE_z\sup_{0\le t\le
T_\iota} |M_t-M^{[u_\alpha]}_t|^2=0.
\end{equation}
On the other hand, it is well known (see \cite[Theorem
5.6]{Trutnau})  that
\[
M^{[u_\alpha]}_t=\int_0^t\sigma\nabla u_\alpha(\BBX_r)\,dW_r,
\quad t\in [0,T_\iota].
\]
Hence
\begin{align*}
\BE_\nu\sup_{0\le t\le
T_\iota}|M^{[u_\alpha]}_t-\int_0^t\sigma\nabla
u(\BBX_r)\,dW_r|^2&\le4\Lambda\BE_\nu\int_0^{T_\iota}|\nabla
u-\nabla u_\alpha|^2(\BBX_t)\,dt\\&\le 4\Lambda E\|\nabla u-\nabla
u_\alpha\|^2_{L^2(E_{0,T};m_T)}\|\hat{R}^{0,T}\nu\|_\infty
\end{align*}
for every $\nu\in \hat{S}_{00}$ (for the definition of $\hat
S_{00}$ see Remark \ref{rem3.1}). Since $u_\alpha\rightarrow u$ in
$M^2(0,T;H^1_0(D))$ (see Lemma \ref{lm.port}), using standard
arguments (see the reasoning in the proof of \cite[Theorem
5.2.1]{FOT}) we conclude from the above inequality and
\cite[Theorem 2.5]{Trutnau} (see also the remark at the end of
Section 4 in \cite{O2}) that, up to a subsequence,
\[
\lim_{\alpha\rightarrow\infty}\BE_z\sup_{0\le t\le T_\iota}
|M^{[u_\alpha]}_t -\int_0^t\sigma\nabla u(\BBX_r)\,dW_r|^2=0
\]
for q.e. $z\in E_{0,T}$. When combined with (\ref{eq6.6}) this
completes the proof.
\end{proof}

\begin{definition}
We say that a measurable function $u:E_{0,T}\rightarrow\BR$ is a
solution of (\ref{eq6.1}) if
\begin{enumerate}
\item[(a)]  $\BP_z(\int_0^{T_{\iota}}|f(\BBX_t,u(\BBX_t),0)|\,dt<\infty)=1$,
$\BE_z\sup_{0\le t\le T_\iota}
|\int_0^tf(\BBX_r,u(\BBX_r),0)\,dr|^2<\infty$ and
$\BE_z\int_0^{T_\iota}\|g(\BBX_t,u(\BBX_t),0)\|^2\,dt<\infty$ for
q.e. $z\in E_{0,T}$,

\item [(b)] $u$ is quasi-continuous, $u\in M^2(0,T;H^1_0(D))$
and for q.e. $z\in E_{0,T}$,
\end{enumerate}
\vspace{-5mm}
\begin{align}
\label{eq6.05} u(z)&=E_z\Big(\varphi(\BBX_{T_\iota})
+\int_0^{T_\iota}f(\BBX_t,u(\BBX_t),\sigma\nabla
u(\BBX_t))\,dt \\
&\qquad\qquad+\int_0^{T_\iota}g(\BBX_t,u(\BBX_t),\sigma\nabla
u(\BBX_t))\,d^\dagger\beta_t^{\iota}\Big). \nonumber
\end{align}
\end{definition}

\begin{definition}
A pair $(Y^z,Z^z)\in\SM^2\times M^2$ is a solution of the BDSDE
\begin{align}
\label{eq6.04} Y^z_t&=\varphi(\BBX_{T_{\iota}})+\int^{T_{\iota}}_t
f(\BBX_r,Y^z_r,Z^z_r)\,dr \\
&\quad+\int^{T_{\iota}}_t
g(\BBX_r,Y^z_r,Z^z_r)\,d^{\dagger}\beta^{\iota}_r
-\int^{T_{\iota}}_0Z^z_rdW_r,\quad t\in[0,T_{\iota}] \nonumber
\end{align}
if $\BP_z(\int_0^T(|f(\BBX_t,Y^z_t,0)|
+\|g(\BBX_t,Y^z_t,0)\|^2)\,dt<\infty)=1$ and (\ref{eq6.04}) is
satisfied $\BP_z$-a.s.
\end{definition}

In much the same way as in the proof of Lemma \ref{stw2.1} one can
show that if (D1)--(D5) are satisfied then for q.e. $z\in E_{0,T}$
the data
\[
(\xi,\bar f(t,y,e),\bar g(t,y,e))= (\varphi(\BBX_{T_\iota}),
f(\BBX_t,y,e),g(\BBX_t,y,e)),\quad t\in[0,T],y\in\BR,e\in\BR^d
\]
satisfy  assumptions $\mbox{\rm{(B1)--(B4)}}$ under the measure
$\BP_z$. Therefore by Corollary \ref{cor5.3} and Theorem
\ref{tw4.1}, for q.e. $z\in E_{0,T}$ there exists a unique
solution $(Y^z,Z^z)$ of (\ref{eq6.04}).

\begin{theorem}
\label{tw6.2} Assume that \mbox{\rm(D1)--(D5)} are satisfied. Then
there exists a unique solution $u$ of \mbox{\rm SPDE
(\ref{eq6.1})}. Moreover, for q.e. $z\in E_{0,T}$,
\[
Y^z_t=u(\BBX_t),\,\,t\in [0,T_\iota],\,\,\BP_z\mbox{-a.s.},\quad
\sigma\nabla u(\BBX)=Z^z,\,\,l^1\otimes\BP_z\mbox{-a.e. on }
[0,T_\iota]\times\Omega\times\Omega'
\]
where $(Y^z,Z^z)$ is a solution of \mbox{\rm(\ref{eq6.04})}.
\end{theorem}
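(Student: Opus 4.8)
The plan is to solve (\ref{eq6.1}) by a fixed point argument that reduces it, via freezing the gradient, to the gradient-independent equation (\ref{eq4.1}) already treated in Sections \ref{sec3}--\ref{sec4}. Given $w\in M^2(0,T;H^1_0(D))$, the coefficients $f_w(z,y)=f(z,y,\sigma\nabla w(z))$ and $g_w(z,y)=g(z,y,\sigma\nabla w(z))$ do not depend on the gradient, and one checks that they satisfy \mbox{\rm(H1)--(H5)}: \mbox{\rm(H2)}, \mbox{\rm(H4)}, \mbox{\rm(H5)} are immediate from \mbox{\rm(D2)}, \mbox{\rm(D4)}, \mbox{\rm(D5)} and the measurability of $\sigma\nabla w$, \mbox{\rm(H3)} follows from \mbox{\rm(D3a)}, \mbox{\rm(D3c)}, while \mbox{\rm(H1)} holds because $|f_w(z,0)|\le|f(z,0,0)|+L|\sigma\nabla w(z)|$ and $\|g_w(z,0)\|\le\|g(z,0,0)\|+\sqrt{m}\,|\sigma\nabla w(z)|$, so that \mbox{\rm(D1)} and $w\in M^2(0,T;V)$ give the required integrability. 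Hence by Theorems \ref{tw3.4} and \ref{th4.4} there is a unique solution $\Phi(w)$ of (\ref{eq4.1}) with data $(\varphi,f_w,g_w)$, and $\Phi(w)\in S^2(0,T;L^2(E;m))\cap M^2(0,T;V)$. This defines a map $\Phi$ on $M^2(0,T;H^1_0(D))$, and any fixed point of $\Phi$ is a solution of (\ref{eq6.1}).

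The main step is to show that $\Phi$ is a contraction for a suitable norm. Take $w^1,w^2$, put $u^i=\Phi(w^i)$ and $\bar u=u^1-u^2$, $\bar w=w^1-w^2$. Then $\bar u$ solves, with zero terminal value, an equation of the type (\ref{eq4.1}) whose coefficient in $\bar u$ is monotone with constant $L$ and whose free terms are bounded, by \mbox{\rm(D3b)}, \mbox{\rm(D3c)}, by $L|\sigma\nabla\bar w|$ and $(\sum_kM_k^2)^{1/2}|\sigma\nabla\bar w|\le\sqrt m\,|\sigma\nabla\bar w|$ respectively. Applying the It\^o--Meyer formula to $e^{\beta t}|\bar u(\BBX_t)|^2$ exactly as in the proof of Theorem \ref{tw4.1} (using Proposition \ref{stw5.1}, which identifies the martingale part of the representation of $u^i$ with $\int_0^{\cdot}\sigma\nabla u^i(\BBX_r)\,dW_r$), splitting the resulting cross terms by Young's inequality with free parameters, and using $B^{(t)}(v,v)=\|\sigma\nabla v\|^2_{L^2(E;m)}$ (which holds because $a=\sigma\sigma^T$), one obtains a weighted energy estimate
\[
2\|\bar u\|_*^2\le\Big(\tfrac{L^2}{\varepsilon}+(1+\delta)m\Big)\|\bar w\|_*^2,\qquad \|v\|_*^2=\BE\int_0^Te^{\beta t}B^{(t)}(v(t),v(t))\,dt ,
\]
where $\beta$ is taken large enough to absorb all contributions of $L$ and $l$. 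Since $m<1$ by \mbox{\rm(D3)}, one may choose $\varepsilon$ large and $\delta$ small so that the factor $\theta=\tfrac12(\tfrac{L^2}{\varepsilon}+(1+\delta)m)$ is strictly less than $1$; as $\|\cdot\|_*$ is equivalent to the norm of $M^2(0,T;H^1_0(D))$ (by assumption (b) of Section \ref{sec3.1}, the Poincar\'e inequality and $1\le e^{\beta t}\le e^{\beta T}$), Banach's fixed point theorem yields a unique $u\in M^2(0,T;H^1_0(D))$ with $\Phi(u)=u$.

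It remains to identify $u$ with the solution and to establish the stochastic representation and uniqueness. Since $u=\Phi(u)$ solves the gradient-free equation (\ref{eq4.1}) with data $(\varphi,f_u,g_u)$, Theorem \ref{tw3.4} shows that $u$ is quasi-continuous and satisfies (\ref{eq6.05}), so $u$ is a solution of (\ref{eq6.1}); moreover Proposition \ref{stw5.1}, applied to these data, gives for q.e. $z\in E_{0,T}$ the equality $(u(\BBX_t),\int_0^t\sigma\nabla u(\BBX_r)\,dW_r)=(Y_t,M_t)$, $t\in[0,T_\iota]$, $\BP_z$-a.s., where $(Y,M)$ is the pair of Theorem \ref{stw2.2} for $(\varphi,f_u,g_u)$. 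Hence $(Y^z_t,Z^z_t):=(u(\BBX_t),\sigma\nabla u(\BBX_t))$ solves BDSDE (\ref{eq6.04}), which is the asserted representation. Finally, uniqueness follows from Corollary \ref{cor5.3}: if $u_1,u_2$ are two solutions of (\ref{eq6.1}), then for q.e. $z$ the pairs $(u_j(\BBX),\sigma\nabla u_j(\BBX))$ both solve BDSDE (\ref{eq6.04}), so they coincide $\BP_z$-a.s., whence $u_1=u_2$ q.e.

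The hard part is the weighted energy estimate of the second paragraph. The difficulty is that the monotonicity and Lipschitz constants $L,l$ in \mbox{\rm(D3)} are allowed to be arbitrary, so the unweighted estimate of Theorem \ref{th4.4}, whose constant depends on $T,L,l$ and is not small, cannot by itself give a contraction. The exponential weight $e^{\beta t}$ is precisely what tames $L$ and $l$: taking $\beta$ and the Young parameter large makes every contribution of $L$ and $l$ negligible, leaving the genuinely small gradient-Lipschitz constant $m<1$ as the contraction factor. Carrying this out rigorously for random functions and q.e.\ starting points --- controlling the backward It\^o integrals against $\beta^\iota$, justifying the identity $B^{(t)}(v,v)=\|\sigma\nabla v\|^2_{L^2(E;m)}$, and checking the equivalence of $\|\cdot\|_*$ with the $M^2(0,T;V)$ norm --- is the technical core of the argument.
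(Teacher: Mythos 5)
Your strategy for existence is genuinely different from the paper's. The paper never forms a contraction in an energy norm on $M^2(0,T;H^1_0(D))$: it iterates at the level of the BDSDE (\ref{eq6.04}), freezing $Z^n$ in the coefficients, and obtains convergence of $(Y^n,Z^n)$ separately under each measure $\BP_z$ from the contraction established in the proof of Theorem \ref{tw4.1} (the norm (\ref{eq.norm}) with $\BE$ replaced by $\BE_z$); Proposition \ref{stw5.1} identifies $Y^n=u^n(\BBX)$ and $Z^n=\sigma\nabla u^n(\BBX)$, the limit function $u$ is then defined pointwise for q.e.\ $z$, a capacity argument (via \cite[Theorem 2.5]{Trutnau}) produces a function $v$ with $v(\BBX)=Z^z$, and a final application of Proposition \ref{stw5.1} yields $v=\sigma\nabla u$. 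Your identification and uniqueness paragraphs essentially coincide with the paper's. The crucial structural difference is where the smallness $m<1$ is exploited: in the paper it is used only under a fixed measure $\BP_z$, where It\^o's formula directly produces the norm being contracted.

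That is exactly where your proposal has a gap: the claimed contraction $\|\bar u\|_*^2\le\theta\,\|\bar w\|_*^2$, with $\|v\|_*^2=E\int_0^Te^{\beta t}B^{(t)}(v(t),v(t))\,dt$, is asserted rather than proved, and it does not follow from the It\^o--Meyer formula ``exactly as in the proof of Theorem \ref{tw4.1}''. The It\^o computation is necessarily carried out under $\BP_z$ for a fixed starting point $z=(s,x)$, and what it yields is a family, indexed by $z$, of bounds on $\BE_z\int_0^{T_\iota}e^{\beta\iota(r)}|\sigma\nabla\bar u(\BBX_r)|^2\,dr$ by the same functional of $\bar w$. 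There is no single measure under which this computation produces $\|\cdot\|_*$, and integrating the pointwise bounds in $z$ against $m_T$ does not produce it either: by duality, $\BE_{m_T}\int_0^{T_\iota}h(\BBX_r)\,dr=E\int_{E_{0,T}}h(z)\,\hat{R}^{0,T}1(z)\,m_T(dz)$, and $\hat{R}^{0,T}1(s,x)\le s$, so one obtains a contraction only in a degenerate norm whose weight vanishes as $s\to0^+$; the space $M^2(0,T;H^1_0(D))$ is not complete for that norm, and Banach's theorem cannot be invoked there. Converting the $\BP_z$-estimates into a genuine bound on $E\int_0^Te^{\beta t}B^{(t)}(\bar u(t),\bar u(t))\,dt$ is precisely the content of the Revuz-measure machinery of Section \ref{sec4}: one must view $\int_0^{\cdot}e^{\beta\iota(r)}|\sigma\nabla\bar u(\BBX_r)|^2\,dr$ as a positive continuous AF whose Revuz measure is $e^{\beta t}|\sigma\nabla\bar u|^2\cdot m_T$ (Proposition \ref{stw5.1} supplies the identification of the martingale part with $\int\sigma\nabla\bar u(\BBX)\,dW$) and apply Lemma \ref{lm2.2}, with $v=e^{\beta t}|\bar u|^2$, whose membership in $M^2(0,T;V)\cap C([0,T];L^2(\Omega\times E;P\otimes m))$ itself requires justification --- in other words, rerun the proof of Theorem \ref{th4.4} for differences, with the exponential weight. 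With that ingredient your factor $\theta=\frac{L^2}{\varepsilon}+(1+\delta)m<1$ does come out and the remainder of your argument goes through; but your list of ``technical core'' items (backward integrals, the identity $B^{(t)}(v,v)=\|\sigma\nabla v\|^2_{L^2(E;m)}$, norm equivalence) omits this step, which is the actual crux, so as written the central contraction estimate is unjustified.
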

\begin{proof}
The proof of uniqueness is similar to the proof of Proposition
\ref{stw3.4}, with obvious modifications in Remark \ref{rem3.8}.
Therefore we only show the existence of a solution. Let
$(Y^0,Z^0)=(0,0)$ and for $n\ge 0$ let $(Y^{n+1},Z^{n+1})$ be a
solution of the BDSDE
\begin{align*}
Y^{n+1}_t&=\varphi(\BBX_{T_\iota})
+\int_t^{T_\iota}f(\BBX_r,Y^{n+1},Z^{n}_r)\,dr
+\int_t^{T_\iota}g(\BBX_r,Y^{n+1}_r,Z^n_r)\,d^\dagger \beta_r \\
&\quad-\int_t^{T_\iota} Z^{n+1}_r\, dW_r,\quad t\in
[0,T_\iota],\quad \BP_z\mbox{-a.s.}
\end{align*}
By Proposition \ref{stw5.1},
\begin{equation}
\label{eq6.2} Y^{n}_t=u^{n}(\BBX_t),\quad t\in [0,T_\iota],\quad
\BP_z\mbox{-a.s.}
\end{equation}
and
\begin{equation}
\label{eq6.03} \sigma\nabla u^{n}(\BBX)=Z^{n},\quad
l^1\otimes\BP_z\mbox{-a.e. on }
[0,T_\iota]\times\Omega\times\Omega'
\end{equation}
for q.e. $z\in E_{0,T}$, where $u^0=0$ and $u^{n}$, $n\ge1$, is a
solution of the  SPDE
\[
\frac{\partial u^n}{\partial t}+A_t u^n =-f(t,x,u^n,\sigma\nabla
u^{n-1}) -g(t,x,u^n,\sigma\nabla
u^{n-1})\,d^{\dagger}\beta_t,\quad u^n(T)=\varphi.
\]
We know that for q.e. $z\in E_{0,T}$ there exist a unique solution
$(Y^z,Z^z)$ of (\ref{eq6.04}) and from the proof of Theorem
\ref{tw4.1} it follows that
\begin{equation}
\label{eq6.3}
\lim_{n\rightarrow\infty}\|(Y^n,Z^n)-(Y^z,Z^z)\|_{\beta,z}=0,
\end{equation}
where the norm $\|\cdot\|_{\beta,z}$ is defined by (\ref{eq.norm})
with $\mathbb{E}$ replaced by $\mathbb{E}_z$. Applying the
Burkholder-Davis-Gundy inequality and using standard argument we
get
\begin{equation}
\label{eq6.4} \lim_{n\rightarrow\infty}\BE_z\Big(\sup_{0\le t\le
T_\iota}|Y^n_t-Y^z_t|^2+\int_0^{T_\iota}|Z^n_t-Z^z_t|^2\,dt\Big)=0
\end{equation}
for q.e. $z\in E_{0,T}$. Let us put
$u(z)=\lim_{n\rightarrow\infty} u_n(z)$ for those $z\in E_{0,T}$
for which it exists in probability $\mathbb{P}_z$, and $u(z)=0$
otherwise. Then from (\ref{eq6.2}) and (\ref{eq6.4}) we conclude
that for q.e. $z\in E_{0,T}$,
\[
Y^z_t=u(\BBX_t),\quad t\in [0,T_\iota],\quad \BP_z\mbox{-a.s.}
\]
By (\ref{eq6.03}), for any $\nu\in\hat S_{00}$ (for the definition
of $\hat S_{00}$ see Remark \ref{rem3.1}) we have
\[
E\int_{E_{0,T}}(|\sigma\nabla(u_n-u_m)(z)|^2\wedge1) \hat
R^{0,T}\nu(z)\,dz =\BE_{\nu}
\int_0^{T_\iota}(|Z^n_t-Z^m_t|^2\wedge1)\,dt,
\]
which by (\ref{eq6.4}) converges to zero as
$n,m\rightarrow\infty$. Therefore using \cite[Theorem
2.5]{Trutnau} (see also the remark at the end of Section 4 in
\cite{O2}) and applying standard argument (see the proof of
\cite[Theorem 5.2.1]{FOT}) shows that there is a measurable
$v:\Omega\times E_{0,T}\rightarrow\BR$ and a subsequence, still
denoted by $n$, such that  for q.e. $z\in E_{0,T}$, $\sigma\nabla
u_n(z)\rightarrow v(z)$ in probability $P$. From this and
(\ref{eq6.4}) it follows that
\[
v(\BBX)=Z^z,\quad l^1\otimes\BP_{z}\mbox{-a.e. on } [0,T_\iota]
\times\Omega\times\Omega'
\]
for q.e. $z\in E_{0,T}$. Hence the pair $(u(\BBX),v(\BBX))$ is a
solution of (\ref{eq6.04}) for q.e $z\in E_{0,T}$. Consequently,
(\ref{eq6.05}) with $\sigma\nabla u$ replaced by $v$ is satisfied
for q.e. $z\in E_{0,T}$. Applying now Proposition \ref{stw5.1}
shows that $u\in M^2(0,T;H^1_0(D))$ and for q.e. $z\in E_{0,T}$,
\[
0= \BE_z\langle\int^{\cdot}_0(\sigma\nabla
u-v)(\BBX_t)\,dW_t\rangle_{T_{\iota}} =
\BE_z\int^{T_{\iota}}_0|(\sigma\nabla u-v)(\BBX_t)|^2\,dt,
\]
which completes the proof of the theorem.
\end{proof}

\section{Probabilistic and mild solutions of SPDEs}
\label{sec7}

In this section we adopt the following notation.
\begin{itemize}

\item $U$ is a separable real Hilbert space, $\{e_k\}_{k\ge1}$ is an
orthonormal basis of $U$, $Q$ is a symmetric nonnegative trace
class operator on $U$ such that $Qe_k=\lambda_ke_k$, $k\ge1$. We
will assume that $U\subset H=L^2(E;m)$. $U_0=Q^{1/2}(U)\subset U$
is the separable Hilbert space with the inner product $\langle
u,v\rangle_{U_0}=\langle Q^{-1/2}u,Q^{-1/2}v\rangle_U$ (Note that
$\{f_k=\sqrt{\lambda_k}e_k\}$ is an orthonormal basis of $U_0$).

\item $L(U_0,\BR)$ is the Banach space of all bounded operators
from $U_0$ into $\BR$ endowed with the supremum norm and
$L_2(U_0,\BR)$, $L_2(U_0,H)$ are the spaces of Hilbert-Schmidt
operators from $U_0$ into $\BR$ and $H$, respectively.

\item $B$ is a $Q$-Wiener process defined on some complete
probability space $(\Omega,\FF,P)$ with values in $U$.
\end{itemize}
It is known (see, e.g., \cite[Chapter 4]{DPZ}) that $B$ has the
expansion
\begin{equation}
\label{eq7.1} B_t=\sum^{\infty}_{k=1}\sqrt{\lambda_k}\beta^k_te_k
=\sum^{\infty}_{k=1}\beta^k_tf_k,\quad t\ge0,
\end{equation}
where $\beta^k_t=\lambda_k^{-1/2}\langle B_t,e_k\rangle_U$ are
real valued mutually independent standard Wiener processes on
$(\Omega,\FF,P)$ (the series above converges in
$L^2(\Omega,\FF,P;U)$ and $P$-a.s. in $C([0,T];U)$).
%\end{itemize}

Suppose we are given $\tilde g:\Omega\times
E_{T}\times\BR\rightarrow\BR$ and $A_t$, $\varphi$, $f$  as in
Section \ref{sec3}. In this section we consider  SPDE of the form
\begin{equation}
\label{eq7.2} du(t)=-(A_tu+f(t,x,u))\,dt-\tilde
g(t,x,u)\,d^{\dagger}B_t, \quad u(T)=\varphi.
\end{equation}
In what follows
\begin{equation}
\label{eq7.3} g=(g_1,g_2,\dots),\quad g_k(t,x,y)=\tilde
g(t,x,y)\cdot f_k(x),\quad (t,x)\in E_{T}\,,y\in\BR.
\end{equation}

\subsection{Probabilistic solutions of (\ref{eq7.2})}
\label{sec7.1}

We assume that $\varphi,f,g$ satisfy (H1)--(H5). By $u$ we denote
the unique solution of SPDE (\ref{eq4.1})  of Theorem \ref{tw3.4}.

Let $t\mapsto G(\BBX_t,u(\BBX_t))$ be the process with values in
$L(U_0,\BR)$ defined as
\[
G(\BBX_t,u(\BBX_t))\psi=\tilde
g(\BBX_t,u(\BBX_t))\cdot\psi(\BBX_t),\quad t\ge0
\]
for $\psi\in U_0$. Since $g$ satisfies (H3b), (H4), (H5), and
$u(\BBX)$ is of class $M$, it follows from Remark \ref{rem3.1}
that the process $t\mapsto g_k(\BBX_t,u(\BBX_t))$ is of class $M$.
Since the family $\{f_k\otimes1,k\ge 1\}$, where $(f_k\otimes
1)(\psi)=\langle\psi,f_k\rangle_{U_0},\psi\in U_0$, is linearly
dense in $L_2(U_0,\BR)$ and
\begin{align*}
\langle G(t,\BBX_t,u(\BBX_t)),f_k\otimes
1\rangle_{L^2(U_0,\BR)}&=\sum^{\infty}_{i=1}
G(t,\BBX_t,u(\BBX_t))f_i\cdot f_k\otimes1(f_i)\\
&=\tilde g(t,\BBX_t,u(\BBX_t))\cdot f_k(\BBX_t) =
g_k(t,\BBX_t,u(\BBX_t)),
\end{align*}
it follows from \cite[Lemma 4.8]{DPZ} that $t\mapsto
G(\BBX_t,u(\BBX_t))$ is of class $M$. In particular, for every
$\omega'$ the process $t\mapsto G(\BBX_t,u(\BBX_t))$ is
$(\FF^{\beta^{\iota}}_{t,T_{\iota}})$-adapted.
%K:\\
%p. Lojasiewicz, Tw. IV.4.12.
%KK\\
 Moreover,
\begin{equation}
\label{eq7.12}
\BE_z\int_0^{T_\iota}|G(\BBX_t,u(\BBX_t))|^2_{L_2(U_0,\BR)}\,dt
=\BE_z\int_0^{T_{\iota}}\|g(\BBX_t,u(\BBX_r))\|^2\,dt<\infty,
\end{equation}
so for every $\omega'$, $t\mapsto G(\BBX_t,u(\BBX_t))$ is an
$(\FF^{\beta^{\iota}}_{t,T_{\iota}})$-adapted
$L_2(U_0;\BR)$-valued process. One can also check (see
\cite[Proposition 3.4]{DQS}) that if we set
\[
B^{\iota}_t=\sum^{\infty}_{k=1}\beta^k_{t+\iota(0)}f_k
=B_{t+\iota(0)},\quad t\in[0,T-\iota(0)],
\]
then
\begin{equation}
\label{eq7.13} \int^t_0G(\BBX_r,u(\BBX_r))\,d^{\dagger}B^{\iota}_r
=\int^t_0g(\BBX_r,u(\BBX_r))\,d^{\dagger}\beta^{\iota}_r
\end{equation}
for every $t\in[0,T]$. Therefore $u$ of Theorem \ref{tw3.4} is a
probabilistic solution of (\ref{eq7.2}) in the sense that for
q.e. $z\in E_{0,T}$,
\begin{equation}
\label{eq7.4} u(z)=E_z\Big(\varphi(\BBX_{T_\iota})
+\int_0^{T_\iota}f(\BBX_t,u(\BBX_t))\,dt
+\int_0^{T_\iota}G(\BBX_t,u(\BBX_t))\,d^\dagger B_t^{\iota}\Big)
,\quad P\mbox{-a.s.}
\end{equation}
By (\ref{eq3.17}), this can be written in the form
\begin{align}
\label{eq7.15} u(s,x)&=E_{s,x}\Big(\varphi(X_T)
+\int_0^{T-s}f(s+t,X_{s+t},u(s+t,X_{s+t}))\,dt\\
&\quad+\int_0^{T-s}G(s+t,X_{s+t},u(s+t,X_{s+t}))\,d^\dagger
B_t^{s}\Big),\quad P\mbox{-a.s.} \nonumber
\end{align}
\begin{remark}
\label{rem7.5} By \cite[(6.2.24)]{O3} and \cite[(4.4)]{O2}, if
$m(B)>0$ for some Borel set $B\subset E$ then
$\mbox{cap}(\{s\}\times B)>0$ for every $s\in\BR$. From this and
the fact that (\ref{eq7.15}) holds for q.e. $(s,x)\in E_{0,T}$ it
follows that for every $s\in(0,T]$ equality (\ref{eq7.15}) holds
true for $m$-a.e. $x\in E$.
\end{remark}

\subsection{Mild solutions of (\ref{eq7.2})}

In this subsection we assume additionally that $B^{(t)}=B^{(0)}$,
$t\in[0,T]$, and that $f$ is Lipschitz continuous with respect to
$y$, i.e. we assume (H1), (H2), (H3)(a), (H4), (H5) and the
following condition: there exists $L>0$ such that for every $z\in
E_{0,T}$,
\begin{equation}
\label{eq7.7} |f(z,y)-f(z,y')|\le L|y-y'|\quad\mbox{for }
y,y'\in\BR.
\end{equation}
Let $A$ denote the operator corresponding to the form $B^{(0)}$,
$\{P_t,t>0\}$ denote the semigroup of linear operators on $H$
associated with $B^{(0)}$, and let
\[
F:\Omega\times[0,T]\times H\rightarrow H, \quad
G:\Omega\times[0,T]\times H\rightarrow L_2(U_0,H)
\]
be operators defined as
\[
F(\omega,t,v)(x)=f(\omega,t,x,v(x)),\quad
(G(\omega,t,v)\psi)(x)=\tilde g(\omega,t,x,v(x))\cdot\psi(x)
\]
for $t\in[0,T]$, $v\in H$, $\psi\in U_0$, $x\in E$.  We shall show
that $u$ is a mild solution of equation (\ref{eq7.2}) interpreted
as abstract evolution equation of the form
\begin{equation}
\label{eq7.8} u(s)=\varphi+\int^T_s(Au(t)+F(t,u(t)))\,dt
+\int^T_sG(t,u(t))\,d^{\dagger}B_t,\quad u(T)=\varphi,
\end{equation}
on the Hilbert space $H$, i.e. if we set $u(s)(x)=u(s,x)$ for
$(s,x)\in E_{0,T}$ then
\begin{equation}
\label{eq7.5} u(s)=P_{T-s}\varphi+\int^{T}_sP_{t-s}F(t,u(t))\,dt
+\int^{T}_sP_{t-s}G(t,u(t))\,d^{\dagger}B_t.
\end{equation}
Here the integral involving $F$ is Bochner's integral. To see
this, let us first note that from (\ref{eq7.7}), (H3b) it follows
that $F(t,\cdot)$, $G(t,\cdot)$  are Lipschitz-continuous. Using
this and \cite[Lemma 4.8]{DPZ} one can show that $F$ (resp. $G$)
is an $(\FF^{\beta}_{t,T})$-progressively measurable mapping from
$\Omega\times[0,T] \times H$ into $(H,\BB(H))$ (resp. into
$(L^2(U_0,H),\BB(L^2(U_0,H)))$.
%\\
%K:\\
%Pokazemy mierzalnosc dla $G$. Wiemy, ze
%\[
%H\ni v\mapsto G(\omega,t,\cdot,v)\in L^2(U_0,H)\quad(*)
%\]
%jest ciagle. Pokazemy teraz, ze
%\[
%\Omega\times[T-t,T]\ni (\omega,s)\mapsto G(\omega,s,\cdot,v)\quad
%(**)
%\]
%jest $\FF^{\beta}_{t,T}\otimes\BB([T-t,T])$-mierzalna. Istotnie,
%dla dowodu $(**)$ wystarczy pokazac, ze odwzorowania
%$\Omega\times[T-t,T]\ni (\omega,s)\mapsto \langle
%G(\omega,s,\cdot,v),f_k\otimes h\rangle_{L^2(U_0,H)}=\langle
%g_k(\omega,s,\cdot,v),h\rangle_H$ sa
%$\FF^{\beta}_{t,T}\otimes\BB([T-t,T])$-mierzalne. Ale z naszego
%zalozenia o $\tilde g$ i faktu, ze $\tilde g$ jest ciagla wzgledem
%$y$ (i tw. o lacznej mierzalnosci funkcji Carath\'eodory'ego, p.
%Aliprantis, s. 499) wynika, ze $\Omega\times[T-t,T]\times E\ni
%(\omega,s,x)\mapsto g_k(\omega,s,x,v(x))\in\BR$ sa
%$\FF^{\beta}_{t,T}\otimes\BB([T-t,T])\otimes\BB(E)$-mierzalne.
%Dlatego na mocy tw. Fubiniego $\Omega\times[T-t,T]\ni
%(\omega,s)\mapsto \langle g_k(\omega,s,\cdot,v),h\rangle_H$ sa
%$\FF^{\beta}_{t,T}\otimes\BB([T-t,T])$-mierzalne.
%\\
%KK
Since $\{P_t\}$ is a contraction on $H$, it follows from (H1),
(H3)(b) and (\ref{eq7.7}) that there is $c>0$ such that
\[
|P_tF(t,v)|_H\le c(1+|v|_H), \quad |P_tG(t,v)|_{L_2(U_0,H)}\le
c(1+|v|_H), \quad t\in[0,T].
\]
Since $u\in M^2(0,T;H)$, $E\int^{T}_s(|P_tF(t,u(t))|_H
+|P_tG(t,u(t))|^2_{L_2(U_0,H)})\,dt<\infty$, so the integrals in
(\ref{eq7.5}) involving $F$ and $G$ are well defined. By
\cite[Proposition A.2.2]{PR}, for $h\in H$ we have
\begin{align*}
\langle\int^{T}_sP_{t-s}F(t,u(t))\,dt,h\rangle_H&=\int^T_s\langle
P_{t-s}F(t,u(t)),h\rangle_H\,dt \\
&=\int^{T-s}_0\langle E_{s,\cdot}\,
f(s+t,X_{s+t},u(s+t,X_{s+t})),h\rangle_H\,dt\\
&=\langle\int^{T-s}_0
E_{s,\cdot}\,f(s+t,X_{s+t},u(s+t,X_{s+t}))\,dt,h\rangle_H.
\end{align*}
By the above and Fubini's theorem,
\begin{equation}
\label{eq7.11}
\langle\int^{T}_sP_{t-s}F(t,u(t))\,dt,h\rangle_H=\langle
E_{s,\cdot}\int^{T-s}_0
f(s+t,X_{s+t},u(s+t,X_{s+t})\,dt,h\rangle_H.
\end{equation}
Similarly, by \cite[Lemma 2.4.1]{PR} and the fact that the process
$X$ it time-homogeneous,
\begin{align*}
&\langle \int^T_sP_{t-s}G(t,u(t))\,d^{\dagger}B_t,h\rangle_H
=\sum^{\infty}_{k=1}\int^T_s\langle
P_{t-s}G(t,u(t))(f_k),h\rangle_H\,d^{\dagger}\beta^k_t\\
&\qquad=\sum^{\infty}_{k=1}\int^{T-s}_0\langle
E_{s,\cdot}\,g_k(s+t,X_{s+t},u(s+t,X_{s+t}),
h\rangle_H\,d^{\dagger}\beta^k_{t+s}\\
&\qquad=\sum^{\infty}_{k=1}\langle\int^{T-s}_0
E_{s,\cdot}\,g_k(s+t,X_{s+t},u(s+t,X_{s+t})
\,d^{\dagger}\beta^k_{t+s},h\rangle_H.
\end{align*}
By the above and the stochastic Fubini theorem (see \cite[Theorem
4.18]{DPZ}),
\begin{align}
\label{eq7.16}
&\langle\int^T_sP_{t-s}G(t,u(t))\,d^{\dagger}B_t,h\rangle_H\\
&\qquad=\sum^{\infty}_{k=1}\langle E_{s,\cdot}\int^{T-s}_0
g_k(s+t,X_{s+t},u(s+t,X_{s+t})
\,d^{\dagger}\beta^k_{t+s},h\rangle_H\nonumber\\
&\qquad= \langle E_{s,\cdot}\sum^{\infty}_{k=1}\int^{T-s}_0
g_k(s+t,X_{s+t},u(s+t,X_{s+t}))\,d^{\dagger}\beta^k_{s+t},h\rangle_H
\nonumber\\
&\qquad= \langle E_{s,\cdot}\int^{T-s}_0
G(s+t,X_{s+t},u(s+t,X_{s+t}))\,d^{\dagger}B^s_t,h\rangle_H.
\nonumber
\end{align}
From Remark \ref{rem7.5} and (\ref{eq7.11}), (\ref{eq7.16}) it
follows that $\langle u(s),h\rangle_H=\langle v(s),h\rangle_H$ for
$h\in H$, where $v(s)$ is defined by the right-hand side of
(\ref{eq7.5}). This shows that $u$ satisfies (\ref{eq7.5}). Thus
we have proved he following proposition.

\begin{proposition}
Assume that $B^{(t)}=B^{(0)}$for all $t\in[0,T]$, and
\mbox{\rm(H1)--(H5), (\ref{eq7.7})} are satisfied. Let $u$ be the
unique solution of SPDE \mbox{\rm(\ref{eq4.1})}. Then
\mbox{\rm(\ref{eq7.5})} holds true for every $s\in(0,T]$.
\end{proposition}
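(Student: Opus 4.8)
The plan is to start from the probabilistic representation of $u$ already secured in Theorem \ref{tw3.4} and, after testing against an arbitrary $h\in H$, to transform it into the mild formulation (\ref{eq7.5}) by interchanging the order of (deterministic and stochastic) integration. The engine of the transformation is the identity $P_t\phi=E_{s,\cdot}\,\phi(X_{s+t})$, valid $m$-a.e.\ for $\phi\in H$, which holds here because the stationarity assumption $B^{(t)}=B^{(0)}$ makes the spatial process $X$ time-homogeneous with semigroup $\{P_t\}$.

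First I would check that the right-hand side of (\ref{eq7.5}) is well defined. Since $f$ is Lipschitz in $y$ by (\ref{eq7.7}) and $g$ satisfies (H3b), the Nemytskii operators $F(t,\cdot)$ and $G(t,\cdot)$ are Lipschitz-continuous on $H$; combined with (H1) this gives the linear growth bounds $|P_tF(t,v)|_H\le c(1+|v|_H)$ and $|P_tG(t,v)|_{L_2(U_0,H)}\le c(1+|v|_H)$. Because $u\in M^2(0,T;H)$ by Theorem \ref{th4.4}, both the Bochner integral and the stochastic integral appearing in (\ref{eq7.5}) are meaningful, and I denote the resulting element of $H$ by $v(s)$.

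Next I would fix $h\in H$ and compute $\langle v(s),h\rangle_H$ termwise. For the drift term I pull the pairing with $h$ inside the Bochner integral via \cite[Proposition A.2.2]{PR}, substitute $t\mapsto s+t$, rewrite $P_tF(s+t,u(s+t))$ as $E_{s,\cdot}\,f(s+t,X_{s+t},u(s+t,X_{s+t}))$ using time-homogeneity of $X$, and then apply Fubini's theorem to move the time integral inside the expectation; this reproduces (\ref{eq7.11}). For the stochastic term I expand $G$ along the basis $\{f_k\}$, use \cite[Lemma 2.4.1]{PR} to bring each summand to a backward It\^o integral of $E_{s,\cdot}\,g_k(s+t,X_{s+t},u(s+t,X_{s+t}))$ against $\beta^k$, and apply the stochastic Fubini theorem \cite[Theorem 4.18]{DPZ} to interchange $E_{s,\cdot}$ with the stochastic integral, reaching (\ref{eq7.16}). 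Together with $\langle P_{T-s}\varphi,h\rangle_H=\langle E_{s,\cdot}\,\varphi(X_T),h\rangle_H$, these identities show that $\langle v(s),h\rangle_H$ equals the pairing of $h$ with the right-hand side of the probabilistic formula (\ref{eq7.15}). By Remark \ref{rem7.5}, (\ref{eq7.15}) holds for $m$-a.e.\ $x\in E$ for each fixed $s\in(0,T]$, so $\langle u(s),h\rangle_H=\langle v(s),h\rangle_H$ for every $h\in H$, whence $u(s)=v(s)$ in $H$ and (\ref{eq7.5}) follows.

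The main obstacle will be the rigorous justification of the two Fubini-type interchanges under the measure-theoretic constraints of the setup: the stochastic Fubini theorem requires the integrand $t\mapsto E_{s,\cdot}\,g_k(s+t,X_{s+t},u(s+t,X_{s+t}))$ to meet the measurability and square-integrability hypotheses of \cite[Theorem 4.18]{DPZ}, which rest on the progressive measurability granted by (H5) and on the bound (\ref{eq7.12}). One must also track that every intermediate equality initially holds only $m$-a.e.\ in $x$ (and q.e.\ in $z$), so that upgrading to a genuine equality in $H$ for \emph{every} $s\in(0,T]$ uses the continuity of both $u$ and $v$ as $H$-valued objects together with Remark \ref{rem7.5}.
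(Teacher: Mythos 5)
Your proposal is correct and follows essentially the same route as the paper: well-definedness of the right-hand side via the Lipschitz/linear-growth bounds and $u\in M^2(0,T;H)$, then testing against $h\in H$ and converting term by term using \cite[Proposition A.2.2]{PR} with Fubini for the drift, \cite[Lemma 2.4.1]{PR} with the stochastic Fubini theorem \cite[Theorem 4.18]{DPZ} for the noise term, time-homogeneity of $X$ (from $B^{(t)}=B^{(0)}$) to identify $P_{t-s}$ with $E_{s,\cdot}$, and finally Remark \ref{rem7.5} to pass from the q.e.\ representation (\ref{eq7.15}) to an $m$-a.e.\ identity in $x$ for each fixed $s\in(0,T]$. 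The obstacles you flag (measurability hypotheses for the stochastic Fubini interchange, tracking the $m$-a.e.\ qualifiers) are exactly the points the paper handles via (H5), (\ref{eq7.12}) and Remark \ref{rem7.5}.
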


\begin{remark}
Let
\[
\hat B_t=B_{T-t}-B_T,\quad t\in[0,T].
\]
On can check  that
\[
\int^T_{T-s} P_{t-(T-s)}G(t,u(t))\,d^{\dagger}B_t =-\int^{s}_0
P_{s-t}G(T-t,u(T-t))\,d\hat B_t,\quad s\in[0,T].
\]
Therefore from (\ref{eq7.5}) it follows that $\bar u$ defined as
$\bar u(t)=u(T-t)$, $t\in[0,T]$, is a mild solution of the problem
\[
d\bar u(t)=(A\bar u+f(T-t,x,\bar u))\,dt-\tilde g(T-t,x,\bar
u)\,d\hat B_t, \quad \bar u(0)=\varphi.
\]
\end{remark}

\begin{remark}
\label{rem7.1}  Note that in case $A_t$ is defined by
(\ref{eq6.01}) one can generalize (\ref{eq7.4}) to equations of
the form (\ref{eq7.2}) with $f,\tilde g$ also depending on the
gradient of a solution (see (\ref{eq6.1})) and satisfying
(D1)--(D5). If, in addition, $A_t=A_0$, $t\in[0,T]$, and we
replace (D3)(a) by Lipschitz continuity in $y$, then in much the
same way as in the proof of (\ref{eq7.5}) one can prove  that $u$
of Theorem \ref{tw6.2} is a mild solution of (\ref{eq7.8}) with
the mappings $F:\Omega\times[0,T]\times H^1_0(D)\rightarrow H$,
$G:\Omega\times[0,T]\times H^1_0(D)\rightarrow L_2(U_0,H)$ defined
as
\[
F(\omega,t,v)(x)=f(\omega,t,x,v(x),\sigma\nabla v(x)),
\]
\[
(G(\omega,t,v)\psi)(x)=\tilde g(\omega,t,x,v(x),\sigma\nabla
v(x))\cdot\psi(x)
\]
for $t\in[0,T]$, $v\in H^1_0(D)$, $\psi\in U_0$, $x\in D$.
\end{remark}

\subsection{Examples}
\label{sec7.3}

Assume that $e_k\in L^{\infty}(E;m)$ and $e_k$ are bounded
uniformly in $k$, or, more generally,
\begin{equation}
\label{eq5.2} \sup_{x\in E}
\sum^{\infty}_{k=1}\lambda_k|e_k(x)|^2<\infty.
\end{equation}
Below we show that under (\ref{eq5.2}) the results of Sections
\ref{sec7.1} and \ref{eq7.2} apply to equation (\ref{eq7.2}) with
$\tilde g$ Lipschitz continuous  such that $\tilde g(\cdot,0)$ is
bounded or square integrable. For simplicity we assume that
$\tilde g$ does not depend on $\omega$.

\begin{example}
Assume that
\[
\tilde g(\cdot,0)\in L^2(E_{0,T};m_T)\cup
L^{\infty}(E_{0,T};m_T)
\]
and $\tilde g$ is Lipschitz continuous in $y$ with some constant
$L'$, i.e.
\[
|\tilde g(t,x,y_1)-\tilde g(t,x,y_2)|\le L'|y_1-y_2|
\]
for all $(t,x)\in E_{0,T}$ and $y_1,y_2\in\BR$. Then $g$ defined
by (\ref{eq7.3}) satisfies (H1) and (H3). Indeed, we have
\[
\|g_k(\cdot,0)\|^2_{L^2(E_{0,T};m_T)}=\lambda_k\|\tilde
g(\cdot,0)e_k(\cdot)\|^2_{L^2(E_{0,T};m_T)}
\]
and
\[
|g_k(t,x,y)-g_k(t,x,y')|\le L'\sqrt{\lambda_k}|e_k(x)|.
\]
By the last inequality, (H3) is satisfied with
$L_k(x)=L'\sqrt{\lambda_k}|e_k(x)|$. In case  $\tilde
g(\cdot,0)\in L^2(E_{0,T};m_T)$  assumption  (\ref{eq5.2})
immediately forces $g$ to satisfy (H1). If $\tilde g(\cdot,0)\in
L^{\infty}(E_{0,T};m_T)$ then $\|\tilde
g(\cdot,0)e_k(\cdot)\|^2_{L^2(E_{0,T};m_T)}\le T^2\|\tilde
g(\cdot,0)\|_{\infty}\|e_k\|_H$, so $g$ satisfies (H3), because
$\tr Q<\infty$ and we assume that $U\subset H$.
\end{example}

\begin{example}
Let $D$ be a nonempty bounded open subset of $\BR^d$ and let
$\tilde g:D_{0,T}\times\BR\times\BR^d\rightarrow\BR$ be a
measurable function. If $\tilde g(\cdot,0,0)\in
L^2(D_{0,T};m_T)\cup L^{\infty}(D_{0,T};m_T)$ and
\[
|\tilde g(t,x,y_1,e_1)-\tilde g(t,x,y_2,e_2)|\le
L'(|y_1-y_2|+|e_1-e_2|)
\]
for every $(t,x)\in E_{0,T}$ and $y_1,y_2\in\BR$,
$e_1,e_2\in\BR^d$ then $g=(g_1,g_2,\dots)$ defined by
$g_k(t,x,y,e)=\tilde g(t,x,y,e)\cdot f_k(x)$ satisfies (D1) and
(D3) with $L_k(x)=M_k(x)=L'\sqrt{\lambda_k}|e_k(x)|$, $x\in D$.
\end{example}

\noindent{\bf Acknowledgements}
\smallskip\\
This research was supported by  NCN Grant No. 2012/07/B/ST1/03508.

\vspace{3mm} {\small \noindent Tomasz Klimsiak: Institute of
Mathematics, Polish Academy of Sciences, \'Sniadeckich 8,
00-956 Warszawa, Poland\\
and\\
Faculty of Mathematics and Computer Science, Nicolaus Copernicus
University, Chopina 12/18, 87-100 Toru\'n, Poland\\
E-mail: tomas@mat.umk.pl
\medskip\\
Andrzej Rozkosz: Faculty of Mathematics and Computer Science,
Nicolaus Copernicus
University, Chopina 12/18, 87-100 Toru\'n, Poland\\
E-mail: rozkosz@mat.umk.pl}


\begin{thebibliography}{KR}


\bibitem{BGP}
V. Bally, I. Gy\"ongy and E. Pardoux, {\em White noise driven
parabolic SPDEs with measurable drift},  J. Funct. Anal. {\bf 120}
(1994),  484--510.

\bibitem{BM}
V. Bally and A. Matoussi, {\em Weak solutions for SPDEs and
backward doubly stochastic differential equations}, J. Theoret.
Probab. {\bf 14} (2001), 125--164.

\bibitem{BCM}
B. Boufoussi, J. Van Casteren and M. Mrhardy, {\em Generalized
backward doubly stochastic differential equations and SPDEs with
nonlinear Neumann boundary conditions}, Bernoulli {\bf 13} (2007),
423--446.

\bibitem{BDHPS}
Ph. Briand, B. Delyon, Y. Hu, E. Pardoux and L.  Stoica, $L^{p}$
{\em solutions of Backward Stochastic Differential Equations},
Stochastic Process. Appl. {\bf 108} (2003), 109--129.

\bibitem{DQS}
R.C. Dalang and L. Quer-Sardanyons, {\em Stochastic integrals for
spde's: a comparison}, Expo. Math. {\bf 29} (2011), 67--109.

\bibitem{DS}
L. Denis and L. Stoica, {\em A general analytical result for
non-linear SPDE's and applications}, Electron. J. Probab. {\bf 9}
(2004) No. 23, 674--709.

\bibitem{DPZ}
G. Da Prato and J. Zabczyk, {\em Stochastic Equations in Infinite
Dimensions}, Cambridge University Press, Cambridge, 1992.

\bibitem{FOT}
M. Fukushima, Y. Oshima and M. Takeda, {\em Dirichlet Forms and
Symmetric Markov Processes}, Walter de Gruyter, Berlin, 1994.

\bibitem{GP}
I. Gy\"ongy and E. Pardoux, {\em On the regularization effect of
space-time white noise on quasi-linear parabolic partial
differential equations}, Probab. Theory Related Fields {\bf 97}
(1993), 211--229.

\bibitem{K}
O. Kallenberg, {\em Foundations of modern probability. Second
edition}, Springer-Verlag, New York, 2002.

\bibitem{K:SPA}
T. Klimsiak, {\em Reflected BSDEs and the obstacle problem for
semilinear PDEs in divergence form}, Stochastic Process. Appl.
{\bf 122} (2012), 134--169

\bibitem{K:JFA}
T. Klimsiak, {\em Semi-Dirichlet forms, Feynman-Kac functionals
and the Cauchy problem for semilinear parabolic equations},  J.
Funct. Anal. {\bf 268} (2015), 1205--1240.

\bibitem{KR:JFA}
T. Klimsiak and A. Rozkosz,  {\em Dirichlet forms and semilinear
elliptic equations with measure data},  J. Funct. Anal. {\bf 265}
(2013), 890--925.

\bibitem{KR:CM}
T. Klimsiak and A. Rozkosz, {\em Semilinear elliptic equations
with measure data and quasi-regular Dirichlet forms}, Colloq.
Math. (2016), DOI: 10.4064/cm6466-10-2015.

\bibitem{Kr}
N.V. Krylov, {\em An analytic approach to SPDEs. Stochastic
partial differential equations: six perspectives}, in: Math.
Surveys Monogr., 64, Amer. Math. Soc., Providence, RI, 1999,
185--242.

\bibitem{MR}
Z. Ma and  Z. R\"ockner,  {\em Introduction to the Theory of
(Non-Symmetric) Dirichlet Forms}, Springer-Verlag, Berlin, 1992.

\bibitem{O1}
Y. Oshima, {\em Some Properties of Markov processes associated
with time dependent Dirichlet forms}, Osaka J. Math. {\bf 29}
(1992), 103--127.

\bibitem{O2}
Y. Oshima, {\em Time-dependent Dirichlet forms and related
stochastic calculus}, Infin. Dimens. Anal. Quantum Probab. Relat.
Top. {\bf 7} (2004), 281--316.

\bibitem{O3}
Y. Oshima, {\em Semi-Dirichlet Forms and Markov Processes}, Walter
de Gruyter, Berlin, 2013.

\bibitem{PP}
E. Pardoux and S. Peng, {\em Backward doubly stochastic
differential equations and systems of quasilinear SPDEs}, Probab.
Theory Related Fields {\bf 98} (1994), 209--227.

\bibitem{PZ1}
S. Peszat and J. Zabczyk, {\em Stochastic evolution equations with
a spatially homogeneous Wiener process}, Stochastic Process. Appl.
{\bf 72} (1997), 187--204.

\bibitem{PZ2}
S. Peszat and J. Zabczyk, {\em Nonlinear stochastic wave and heat
equations},  Probab. Theory Related Fields {\bf 116} (2000),
421--443.

\bibitem{Pierre}
M. Pierre, {\em Parabolic Capacity and Sobolev Spaces},  J. Math.
Anal. {\bf 14} (1983), 522--533.

\bibitem{PR}
C. Pr\'ev\^ot and M. R\"ockner, {\em A Concise Course on
Stochastic Partial Differential Equations}, Lecture Notes in Math.
{\bf 1905}, Springer, Berlin, 2007.

\bibitem{R}
B.L. Rozovskii, {\em Stochastic evolution systems},  Kluwer,
Dordrecht, 1990.

\bibitem{Sharpe}
M. Sharpe,  {\em General Theory of Markov Processes}. Academic
Press, New York, 1988.

\bibitem{Stannat}
W. Stannat, {\em The theory of generalized Dirichlet forms and its
applications  in analysis and stochastics}, Mem. Amer. Math. Soc.
142 (1999), no. 678.

\bibitem{Trutnau}
G. Trutnau, {\em Stochastic Calculus of Generalized Dirichlet
Forms and Applications to Stochastic Differential Equations in
Infinite Dimensions}, Osaka J. Math. {\bf 37} (2000), 315--343.

\bibitem{W}
J.B. Walsh,  {\em An introduction to stochastic partial
differential equations}, in: \'Ecole d'\'et\'e de probabilit\'es
de Saint-Flour, XIV--1984, Lecture Notes in Math. {\bf 1180},
Springer, Berlin, 1986, 265--439.

\bibitem{WZ}
Z. Wu and F. Zhang, {\em BDSDEs with locally monotone coefficients
and Sobolev solutions for SPDEs}, J. Differential Equations {\bf
251} (2011), 759--784.

\bibitem{ZZ}
Q. Zhang and H. Zhao, {\em Stationary solutions of SPDEs and
infinite horizon BDSDEs}, J. Funct. Anal. {\bf 252}, (2007)
171--219.

\end{thebibliography}
\end{document}